\newcommand{\weil}{Weil }
\newcommand{\id}{id}
\newcommand{\zn}{\mathbb{Z}/p\mathbb{Z}}
\newcommand{\znn}{\mathbb{Z}/2p\mathbb{Z}}
\newcommand{\Sp}{SL_2(\mathbb{Z}/p\mathbb{Z})}
\newcommand{\Spp}{SL_2(\mathbb{Z}/2p\mathbb{Z})}
\newcommand{\Sppp}{SL_2(\mathbb{Z}/2^n\mathbb{Z})}
\newcommand{\Span}{\operatorname{Span}}
\newcommand{\Stab}{\operatorname{Stab}}
\newcommand{\znnn}{\mathbb{Z}/2^n\mathbb{Z}}
\newcommand{\ssp}{Sp_{2g}(\mathbb{Z})}
\newcommand{\sph}{\widetilde{Sp_{2g}(\mathbb{Z})}}
\newcommand{\spp}{Sp_{2g}(\mathbb{Z}/p\mathbb{Z})}
\newcommand{\sppp}{Sp_{2g}(\mathbb{Z}/2p\mathbb{Z})}
\newcommand{\GL}{\operatorname{GL}}
\newcommand{\PGL}{\operatorname{PGL}}
\newcommand{\End}{\operatorname{End}}
\newcommand{\tr}{\operatorname{Tr}}
\newcommand{\Mod}{\operatorname{Mod}}
\newcommand{\Add}{\operatorname{Add}}
\newcommand{\quotient}[2]{{\raisebox{.2em}{$#1$}\left/\raisebox{-.2em}{$#2$}\right.}}
\begin{document}

\theoremstyle{plain}
\newtheorem{theorem}{Theorem}[section]
\newtheorem{main_theorem}[theorem]{Main Theorem}
\newtheorem{proposition}[theorem]{Proposition}
\newtheorem{corollary}[theorem]{Corollary}
\newtheorem{corollaire}[theorem]{Corollaire}
\newtheorem{lemma}[theorem]{Lemma}
\theoremstyle{definition}
\newtheorem{definition}[theorem]{Definition}
\newtheorem{Theorem-Definition}[theorem]{Theorem-Definition}
\theoremstyle{remark}
\newtheorem*{remark}{Remark}
\newtheorem{example}{Example}

\sloppy

\title{Irreducible factors of Weil representations and TQFT}

\author{Julien \textsc{Korinman}
\\ \small Universidade Federal de S\~ao Carlos
\\ \small Rodovia Washington Lu\'is, Km 235, s/n 
\\ \small S\~ao Carlos - SP, 13565-905
\\ \small email: \texttt{ julienkorinman@dm.ufscar.br}
}
\date{}
\maketitle


\begin{abstract} 
We give the decomposition into irreducible factors of \weil representations of $\ssp$ at even levels, generalizing the decompositions in \cite{Kl,CNS} at odd levels. We then derive the decomposition of the  quantum representations of $SL_2(\mathbb{Z})$ arising in the $SU(2)$ and $SO(3)$ TQFTs. As application we show that, when the level indexing the TQFT is not a multiple of $4$, the universal construction of \cite{BHMV2} applied to a cobordism category without framed links leads to the same TQFT.
\vspace{2mm}
\par 
Keywords: \weil representations, symplectic groups, Topological Quantum Field Theory.
\end{abstract}

\section{Introduction and statements}

\vspace{2mm}
\subsection{A brief history}
\vspace{2mm}
\par In this paper we study a family of unitary representations of the symplectic groups $\ssp$, indexed by some integer $p\geq 2$, which are related to number theory, mathematical physics and topology (see the next section for definitions). 
They first appeared in the work of Kloosterman in $1946$ (see \cite{Kl}) where they arise as modular transformations of spaces of theta functions. They were rediscovered independently by the physicist Shale (\cite{Sh}) following Segal (\cite{Se}) in $1962$ when the authors studied the Weyl quantization of the symplectic torus. Their construction has been generalized to arbitrary locally compact abelian groups by Weil in $1964$ (see \cite{Weil}). The ones we consider in this paper are associated to $\zn$. They also appeared independently in the work of Igusa (\cite{Ig}) and Shimura (\cite{Shi}) on theta functions. See also  \cite{LV} for another construction.

\vspace{2mm}
\par The mathematical physics community studied the semi-classical properties of the \weil representations associated to finite cyclic groups when the level $p$ tends to infinity as a model for quantum chaotical behavior (see \cite{HB,KR,FNdB,BDB}).
\vspace{2mm}

\par Topologists began to study these representations because they fit into the framework of  Topological Quantum Field Theories.
Their definition for even levels and arbitrary genus first appeared in \cite{Fu2, Go} in relation with $3$-manifold invariants which were studied in \cite{MOO} and further explored in \cite{DG} in the more general context of abelian invariants.
\par  The main motivation of the author for this paper was to obtain information on the Witten-Reshetikhin-Turaev representations of the mapping class groups, as defined in \cite{RT}, using a relation between the two families of representations in the genus one case.
\par The construction we will use in this paper is related to knot and skein theory following the topological point of view of \cite{ GU}. Though less standard that the number theoretical or geometrical construction, this point of view is more elementary, crucial in the proofs of Theorem \ref{irred_factors}  and makes more transparent the relation with the Witten-Reshetikhin-Turaev representations made in the last section.

\vspace{4mm}
 \subsection{Statements}
\vspace{2mm}
\par Given two integers $p\geq 2$ and $g\geq 1$, the \weil representations are projective unitary representations of the symplectic group $\ssp$
$$ \pi_{p,g} : \ssp\rightarrow \PGL(U_p^{\otimes g})$$
where $U_p$ is a free module of rank $p$ over the ring:
$$ \textbf{k}_p := \left\{ 
\begin{tabular}{ll}
 $\mathbb{Z} \left[ A,\frac{1}{2p} \right] /(\phi_{p}(A))$  , &when $p$ is odd.
\\ $\mathbb{Z} \left[ A,\frac{1}{p} \right] /(\phi_{2p}(A))$,  &when $p$ is even.
\end{tabular} \right.
$$
where $\phi_p\in \mathbb{Z}[X]$ represents the cyclotomic polynomial of degree $p$. 
\vspace{3mm}
\par In \cite{Kl}, Kloosterman gave a complete decomposition of the Weil representations when $g=1$ and $p$ is odd. His result was further generalized by Cliff, Mc Neilly and Szechtman in \cite{CNS} to arbitrary genus still at odd levels (see also \cite{Prasad}).
\par The main result of this paper is the extension of these decompositions to even levels.
\vspace{2mm}
\par Let $a, b\geq 2$   be two coprime non negative integers with $b$ odd, and let $u$ and $v$ be odd integers such that $au+bv=1$ in the case where $a$  is odd  and such that $2au+bv=1$ if $a$ is even and $b$ is odd. We define a ring isomorphism $\mu : \textbf{k}_{ab}\rightarrow \textbf{k}_a\otimes \textbf{k}_b$ by $\mu(A)=(A^{vb},A^{au})$ if $a$ is odd and $\mu(A)=(A^{vb},A^{2au})$ if $a$ is even,   which turns $U_a^{\otimes g}\otimes U_b^{\otimes g}$ into a $\textbf{k}_{ab}$-module.
\vspace{1mm}
\par For $r$ prime and $n\geq 1$, we define the ring homomorphism  $\mu : \textbf{k}_{r^{n}}\rightarrow \textbf{k}_{r^{n+2}}$ by $\mu(A)=A^{r^2}$ which turns $U_{r^{n}}^{\otimes g}$ into a $\textbf{k}_{r^{n+2}}$-module.
\vspace{1mm}

\par Set $\sigma(p)$ for the number of divisors of $p$ including $1$.
\vspace{2mm}
\begin{theorem}\label{main_theorem}
The level $p$ \weil representation  contains $\sigma(p)$ irreducible submodules, when $p$ is odd and $\sigma(\frac{p}{2})$, when $p$ is even. They decompose according to the following rules, where $\cong$ denotes an isomorphism of $\ssp$ projective modules:
\begin{enumerate}
\item  If $a, b\geq 2$  are two coprime integers, then:
$$U_a^{\otimes g}\otimes U_b^{\otimes g} \cong U_{ab}^{\otimes g}$$
\item  If $r$ is prime and $n\geq 1$, then:
$$U_{r^{n+2}}^{\otimes g}\cong U_{r^n}^{\otimes g}\oplus W_{r^{n+2}}^{\otimes g}$$
where  $W_{r^{n+2}}$ is a free submodule of $U_{r^{n+2}}$.
\item If $r$ is an odd prime, then:
$$U_{r^2}^{\otimes g}\cong \mathds{1}\oplus W_{r^2}^{\otimes g}$$
where $\mathds{1}$ denotes the trivial representation.
\item  Every factor $U_p^{\otimes g}, W_{r^n}^{\otimes g}$ for $p\geq 3$ decomposes into two invariant submodules, 
\begin{eqnarray*}
U_p^{\otimes g}\cong U_p^{g,+}\oplus U_p^{g,-} \\ W_{r^n}^{\otimes g}\cong W_{r^n}^{g ,+}\oplus W_{r^n}^{g,-}
\end{eqnarray*}
We call $U_p^{g,+}$ and $W_{r^n}^{g,+}$ the even modules and $U_p^{g,-}, W_{r^n}^{g,-}$ the odd modules.

\item  The application of the previous four rules decomposes any $U_p^{\otimes g}$ into a direct sum of modules of the form $B_{r_1}\otimes \ldots \otimes B_{r_k}$ with $r_1,\ldots,r_k$ distinct prime numbers and $B_{r_i}\in \{ U_{r_i}^{g, \pm }, \;W_{r_i^n}^{g, \pm } \}$. These modules are all irreducible and pairwise distinct.
\end{enumerate} 
\end{theorem}

\vspace{3mm}
\par The Witten-Reshetikhin-Turaev representations $V_p$ of $SL_2(\mathbb{Z})$ defined in \cite{RT} are projectively isomorphic to the odd submodule $U_p^-$ of the Weil representations (see \cite{FK} when $p$ is even, \cite{LW} when $p\equiv 1 \pmod{4}$ and the last section of this paper for a general proof). We deduce the following:

\vspace{2mm}
\begin{corollary}\label{coro}
We have the following decomposition into irreducible modules of the genus one $SO(3)$ and $SU(2)$ quantum representations at level $p$  of $SL_2(\mathbb{Z})$:

\begin{eqnarray*}
V_p &\cong& \bigoplus_{B\in E , B_1\in E_1, \ldots , B_k\in E_k} B\otimes B_1 \otimes \ldots \otimes B_k, \rm{ ~ when  ~} p \rm{ ~ is  ~even;}
\\ V_p &\cong& \bigoplus_{ B_1\in E_1, \ldots , B_k\in E_k} B_1 \otimes \ldots \otimes B_k ,\rm{ ~when ~ } p \rm{ ~ is  ~odd.}
\end{eqnarray*}
where $p=2^m r_1^{n_1}\ldots r_k^{n_k}$ is the factorization into primes and:

\begin{itemize}
\item If  $j$ is such that $n_j$ is odd, $E_j = \left\{ W_{r_j^{n_j-2a_j}}^{+},W_{r_j^{n_k-2a_j}}^{-} , U_{r_j}^{+}, U_{r_j}^{-} \; | \; 0\leq a_j \leq \left\lceil \frac{n_k}{2} \right\rfloor -1 \right\}$.
\item If  $j$ is such that $n_j$ is even, $E_j =  \left\{ W_{r_j^{n_j-2a_j}}^{+},W_{r_j^{n_k-2a_j}}^{-} , \mathds{1} \; | \; 0\leq a_j \leq \left\lceil \frac{n_k}{2} \right\rfloor -1 \right\}$.
\item If $m$ is odd, $E = \left\{ W_{2^{m-2a}}^{+}, W_{2^{m-2a}}^{-}  , U_2 \; | \; 0\leq a \leq \left\lceil \frac{m}{2} \right\rfloor -1 \right\}$.
\item If $m$ is even, $E = \left\{ W_{2^{m-2a}}^{+}, W_{2^{m-2a}}^{-}  , U_4^{+}, U_4^{-} \; | \; 0\leq a \leq \left\lceil \frac{m}{2} \right\rfloor -1 \right\}$.
\end{itemize}
with the condition that each summand $B\otimes B_1 \otimes \ldots \otimes B_k$ or $B_1 \otimes \ldots \otimes B_k$ contains an odd number of modules $U_p^{-}, W_{r^n}^{-}$.
\end{corollary}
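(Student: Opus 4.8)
The plan is to deduce the corollary from the Main Theorem specialized to $g=1$, once $V_p$ has been identified with the odd subrepresentation of the genus one Weil representation. By the results recalled just above --- \cite{FK,ASW} in the even case, \cite{LW} together with its extension to all levels $p\geq 3$ in the odd case --- the $SU(2)$ representation at even level $p$ and the $SO(3)$ representation at odd level $p$ are isomorphic, as projective $SL_2(\mathbb{Z})$-modules, to the odd factor $U_p^{-}$ of $U_p=U_p^{\otimes 1}$; this matches the dimension count $\dim U_p^{-}=\lfloor (p-1)/2\rfloor$, which is the genus one dimension of $V_p$ in both regimes. Hence everything reduces to reading off the odd part of the decomposition of $U_p$ provided by the Main Theorem.

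First I would apply rule (1) to the prime factorization $p=2^m r_1^{n_1}\cdots r_k^{n_k}$ to get $U_p\cong U_{2^m}\otimes U_{r_1^{n_1}}\otimes\cdots\otimes U_{r_k^{n_k}}$, and then iterate rules (2) and (3) on each prime-power factor. Iterating on $U_{r_j^{n_j}}$ peels off $W_{r_j^{n_j}}, W_{r_j^{n_j-2}},\ldots$ two exponents at a time, the process terminating at $\mathds{1}$ when the exponent of an odd prime is even, at $U_{r_j}$ when it is odd, and at $U_2$ or $U_4$ for the prime $2$ according as $m$ is odd or even: this is exactly the bookkeeping encoded by the sets $E_j$ and $E$ of the corollary, including the ranges $0\leq a_j\leq\lceil n_j/2\rfloor-1$. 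Rule (4) then splits every remaining $U$- or $W$-factor of index $\geq 3$ into its even and its odd summand, and rule (5) says the resulting tensorands $B\otimes B_1\otimes\cdots\otimes B_k$ with $B\in E$, $B_j\in E_j$ are irreducible and pairwise non-isomorphic. This is precisely the list of summands in the corollary, before the parity constraint.

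Next I would determine which summands lie in $U_p^{-}=V_p$. By definition the splitting $U_p\cong U_p^{+}\oplus U_p^{-}$ is the eigenspace decomposition of the central element $-I\in SL_2(\mathbb{Z})$, which acts by $+1$ on $U_p^{+}$ and by $-1$ on $U_p^{-}$; thus it acts by $+1$ on every ``$+$'' factor, on $\mathds{1}$ and on $U_2$, and by $-1$ on every ``$-$'' factor. Since the isomorphisms of rules (1)--(3) intertwine the $SL_2(\mathbb{Z})$-actions and $-I$ is central, $-I$ acts as a scalar on each irreducible tensorand, hence as the product of these scalars on $B\otimes B_1\otimes\cdots\otimes B_k$, and this product is $-1$ precisely when an odd number of the tensorands carry a ``$-$'' superscript. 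Therefore $U_p^{-}$, and with it $V_p$, is the sum of exactly those summands obeying this parity condition, which is the assertion.

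The only non-formal point is the compatibility of the Weil action of $-I$ with the structural isomorphisms of the Main Theorem: that $-I$ acts diagonally through the Chinese remainder isomorphism $U_{ab}\cong U_a\otimes U_b$ and respects each refinement $U_{r^{n+2}}\cong U_{r^n}\oplus W_{r^{n+2}}$, and that on the prime-$2$ bottom piece the parity operator is trivial (so $U_2$ is entirely even). Both follow at once from the explicit description of $\pi_{p,1}(-I)$ as the parity involution $\delta_x\mapsto\delta_{-x}$ of the canonical basis of $U_p$ used in the proof of the Main Theorem. Given this, all that remains is the divisor bookkeeping matching $E$ and the $E_j$ and the trivial observation that a sub-collection of pairwise non-isomorphic irreducibles is again a direct sum of pairwise non-isomorphic irreducibles.
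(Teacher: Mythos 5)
Your proposal is correct and follows exactly the route the paper intends (the paper gives no written proof, stating only that the corollary is "deduced" from the identification of $V_p$ with the odd Weil factor plus the Main Theorem): identify $V_p\cong U_p^{-}$ via \cite{FK,ASW,LW}, decompose $U_p$ by rules (1)--(5), and select the summands on which the parity involution $\theta=\pi_{p,1}(-I)$ acts by $-1$, i.e.\ those with an odd number of "$-$" tensorands. Your verification that $\theta$ is compatible with the isomorphisms of Lemmas \ref{point1} and \ref{point2} and that $U_2$ is entirely even is exactly the right supporting check.
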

\vspace{3mm}

\par \begin{example}
The \weil representation $(\pi_{500},U_{500})$ at level $500$ decomposes as follows:
\begin{eqnarray*}
U_{500}&\cong& U_{4}\otimes U_{125} \cong U_4\otimes(U_5\oplus W_{125})
\\ &\cong&  (U^+_4\otimes U^+_5)\oplus(U^-_4\otimes U^+_5)\oplus (U_4^+\otimes U_5^-)\oplus(U^-_4\otimes U^-_5) 
\\ &&\oplus (U_4^+\otimes W^+_{125})\oplus (U_4^+\otimes W_{125}^-)
\oplus (U_3^-\otimes W^+_{125})\oplus (U_3^-\otimes W_{125}^-)
\end{eqnarray*}
 In particular, we derive the following decomposition of the $SU(2)$-quantum representation $(\rho_{500},V_{500})$ in genus one at level $500$:
$$
 V_{500} \cong U_{500}^-\cong (U^-_4\otimes U^+_5)\oplus (U_4^+\otimes U_5^-) \oplus (U_4^-\otimes W^+_{125})\oplus (U_4^+\otimes W_{125}^-)
$$
where each factor in parenthesis is an irreducible factor.
\end{example}

\vspace{2mm}
\par The previous decomposition has the following application. The TQFTs defined in \cite{BHMV2} associate to each closed oriented surface $\Sigma$, a vector space $V_p(\Sigma)$. To a triple $(M, \phi, L)$, where $M$ is a closed oriented $3$ manifold, $\phi:\partial M \rightarrow \Sigma$ an orientation-preserving homeomorphism and $L\subset M$ an embedded framed link (possibly empty), the TQFT associates a vector $Z_p(M,\phi,L)\in V_p(\Sigma)$. Such vectors generate $V_p(\Sigma)$ by definition (see the last section). The following theorem was proved by Roberts  in the particular case where $p$ is prime  (it results from Lemma $2$ in \cite{Ro}).
\vspace{2mm}
\begin{theorem}\label{th_BHMV} If $4$ does not divides $p$, then in the $SU(2)$ and $SO(3)$ TQFTs (see \cite{BHMV2} for definitions), 
the vectors $Z_p(M,\phi, \emptyset)$, associated to cobordisms without framed links, generate $V_p(\Sigma)$.
\end{theorem}
\par It results that the universal construction of \cite{BHMV2} applied to a cobordism category without framed links leads to the same TQFTs. In particular these TQFTs are determined by their $3$ manifolds invariants without framed links (see the last section for details). It contrasts with the usual constructions (see \cite{BHMV2,Roberts94})  where standard generating sets for $V_p(\Sigma)$ are constructed from the skein modules of Handlebodies.

\vspace{2mm}
 $\textbf{Acknowledgements:}$ The author is thankful to R.Bacher,  C.Blanchet, F.Costantino, L.Funar, and J. March\'e for valuable discussions  and to G.Masbaum for pointed him the reference \cite{Prasad}. The author  acknowledges  support from the grant ANR $2011$ BS $01 020 01$ ModGroup, the GDR Tresses, the GDR Platon and the GEAR Network.
\vspace{5mm}

\section{Definition of the projective \weil representations}

The following section closely  follows the definitions from \cite{GU}.

\subsection{Heisenberg groups and Schr\"odinger representations}

\vspace{2mm}
\begin{definition} 
\begin{enumerate}
\item
  Let $p\geq 2$ and $M$ be a compact oriented $3$-manifold possibly with boundary. The  \textit{reduced abelian skein module} $\widetilde{\mathcal{T}}_p(M)$  is the $\textbf{k}_p$-module  generated by the isotopy classes of oriented banded links of ribbons in $M$ quotiented by the relations given by the abelian skein relations  of Figure $\ref{relations_skein}$ and by the submodule generated by the links made of $p$ parallel copies of the same ribbon.
\begin{figure}[!h] 
\centerline{\includegraphics[width=10cm]{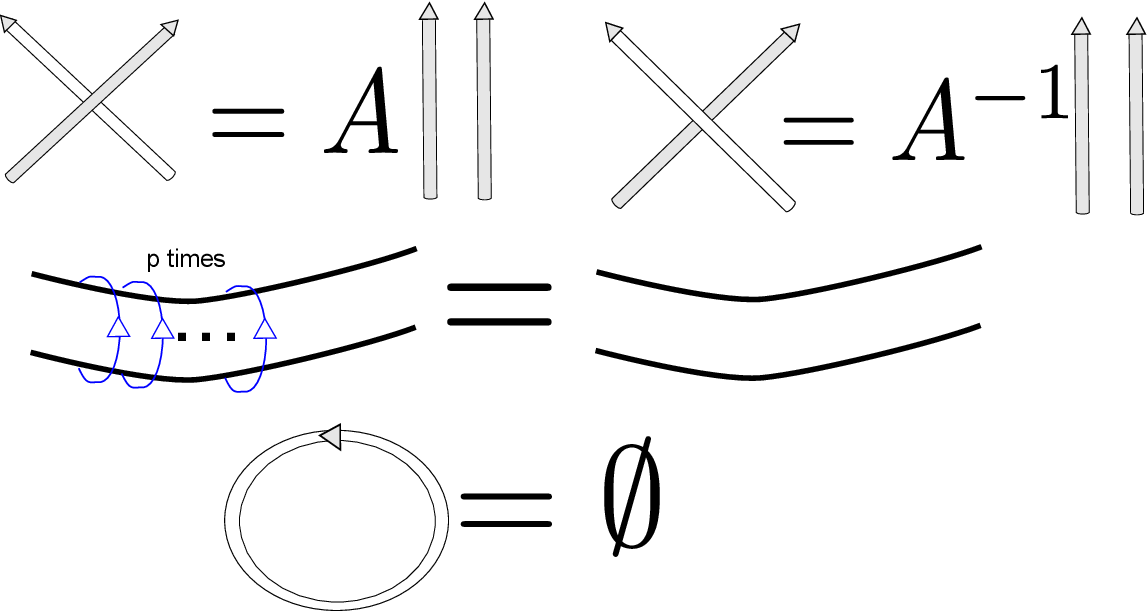}}
\caption{Skein relations defining the  reduced abelian skein modules.} 
\label{relations_skein} 
\end{figure} 

\vspace{2mm} 
\par The  reduced abelian skein module of the sphere $S^3$ has rank one. The class of a link $L\subset S^3$ in this module is equal to the class of the empty link multiplied by $A^{lk(L)}$ where $lk(L)$ represents the self-linking number of $L$. This gives a natural isomorphism $\widetilde{\mathcal{T}}_p(S^3)\cong \textbf{k}_p$. 
\vspace{2mm}
\par It is classic, that if $M\cong \Sigma \times [0,1]$ is a thickened surface, then its reduced skein module $\widetilde{\mathcal{T}}_p(M)$ is isomorphic to $\textbf{k}_p [H_1(\Sigma, \zn)]$.
\item
Denote by $H_g$  the genus $g$ handlebody. Its abelian skein module is freely generated by the elements of $H_1(H_g,\zn)$. So, if we denote by $U_p$ the module $\widetilde{\mathcal{T}}_p(S^1\times D^2)$, we have a natural $\textbf{k}_p$-isomorphism between $\widetilde{\mathcal{T}}_p(H_g)$ and $U_p^{\otimes g}$.
\item Let  $\Sigma_g$ be a closed oriented surface of genus $g$.
 The module $\widetilde{\mathcal{T}}_p(\Sigma_g\times [0,1])$ has an algebra structure with product induced by superposition, which appears to be the algebra of the following group. 
\vspace{2mm} \par  We denote by $c\in \widetilde{\mathcal{T}}_p(\Sigma_g\times [0,1])$ the product of the class of the empty link by $A\in \textbf{k}_p$. We call \textit{Heisenberg group}  and denote $\mathcal{H}_{p,g}$ the subgroup of $\widetilde{\mathcal{T}}_p(\Sigma_g\times [0,1])$ generated by $c$ and $H_1(\Sigma_g, \zn)$. Denote by $\omega$ the intersection form  $\omega : H_1(\Sigma_g, \zn)\times H_1(\Sigma_g, \zn) \rightarrow \zn$ when $p$ is odd and $\omega : H_1(\Sigma_g, \zn)\times H_1(\Sigma_g, \zn) \rightarrow \znn$, when $p$ is even. Then $\mathcal{H}_{p,g}$ is isomorphic to the group $H_1(\Sigma_g,\zn)\times \zn$, when $p$ is odd and $H_1(\Sigma_g, \znn)\times \znn$ when $p$ is even with group law given by: $$ (X,z)\bullet (X',z') = (X+X', z+z'+\omega(X,X'))$$
\item We choose a homeomorphism $\phi: \Sigma_g\rightarrow \Sigma_g$ so that  $\left( \Sigma_g\times [0,1] \right)  \bigcup_{\phi} H_g \cong H_g$. This gluing induces a linear action of the Heisenberg group on the reduced skein module $\widetilde{\mathcal{T}}_p(H_g)\cong U_p^{\otimes g}$. This representation is called \textit{the Schr\"odinger representation} and will be denoted by $\Add_p : \mathcal{H}_{p,g} \rightarrow \GL(U_p^{\otimes g})$. Up to isomorphism, this representation does not depend on $\phi$.
\end{enumerate}
\end{definition}

\subsection{The \weil representations}
 Every element of the mapping class group $\Mod (\Sigma_g)$ acts on $H_1(\Sigma_g,\mathbb{Z})$ by preserving the intersection form. Choosing a basis of $H_1(\Sigma_g, \mathbb{Z})$ we obtain  a surjective morphism $f : \Mod(\Sigma_g)\rightarrow \ssp$ whose kernel is called Torelli group.   
\vspace{2mm}
\par Let $g\geq 1$, the module $ \widetilde{\mathcal{T}}_p(\Sigma_g\times [0,1])$ is spanned by classes of links embedded in $\Sigma\times \{\frac{1}{2}\}$ with parallel framing whose class only depends on their homology class in $\Sigma_g$. The action in homology of the  mapping class group $\Mod (\Sigma_g)$ induces,  by passing through the quotient by the reduced skein relations, an action on the Heisenberg group. We denote by $\bullet$ this action. Let $\phi \in \Mod (\Sigma_g)$ and consider the representation $s^{\phi} : \mathcal{H}_{p,g}\rightarrow \GL(U_p^{\otimes g})$ defined by $s^{\phi}(h):= \Add_p(\phi \bullet h)$ for all $ h \in \mathcal{H}_{p,g}$. It is a standard fact, referred as the Stone-Von Neumann theorem, that 
the Schr\"odinger representation is the unique irreducible representation of the Heisenberg group sending the central element $c$ to the scalar operator $A\cdot \mathds{1}$.

It results that the representation $s^{\phi}$ is conjugate to the Schr\"odinger representation. Thus there exists $\pi_{p,g}(\phi) \in \GL(U_p^{\otimes g})$, uniquely determined up to multiplication by an invertible scalar, so that:

\begin{equation}\label{egorov}
\pi_{p,g}(\phi)\Add_p(h)\pi_{p,g}(\phi)^{-1} = \Add_p(\phi \bullet h),  \mbox{ for any } h\in \mathcal{H}_{p,g}
\end{equation}
\vspace{1mm}
The equation $(\ref{egorov})$ is called the Egorov identity and we easily show that the elements $\pi_{p,g}(\phi)$ define a projective representation $\underline{\pi}_{p,g} : \Mod(\Sigma_g)\rightarrow \PGL(U_p^{\otimes g})$ called the Weil representation.

\vspace{2mm} \par
 Since the action of $\Mod(\Sigma_g)$ on $\mathcal{H}_{p,g}$ factorizes through the Torelli group  and through $\spp$ when $p$ is odd and $\sppp$ when $p$ is even, so do the \weil representations. 

\vspace{2mm} \par The previous definition of the \weil representations as intertwining operators is not explicit. To manipulate it more easily, we choose the generators of $\ssp$ consisting of the image through $f$ of the Dehn twists $X_i, Y_i, Z_{ij}$ of Figure $\ref{gen_twists}$ (see  \cite{Li1} for a proof these Dehn twists generate the mapping class group). We define the basis $\{ e_{a_1}\otimes \ldots \otimes e_{a_g} | a_1,\ldots,a_g \in \zn \}$ of $U_p^{\otimes g}$ as in Figure $\ref{base_tore_solid}$, that means that  $e_{a_1}\otimes \ldots \otimes e_{a_g}$ is the class of a link made of $a_i$ parallel copies of an unframed ribbon encircling the $i^{th}$ hole of $H_g$ one time. To express the image of the generators in the basis, we will first need to define Gauss sums.

\begin{figure}[!h] 
\centerline{\includegraphics[width=11cm]{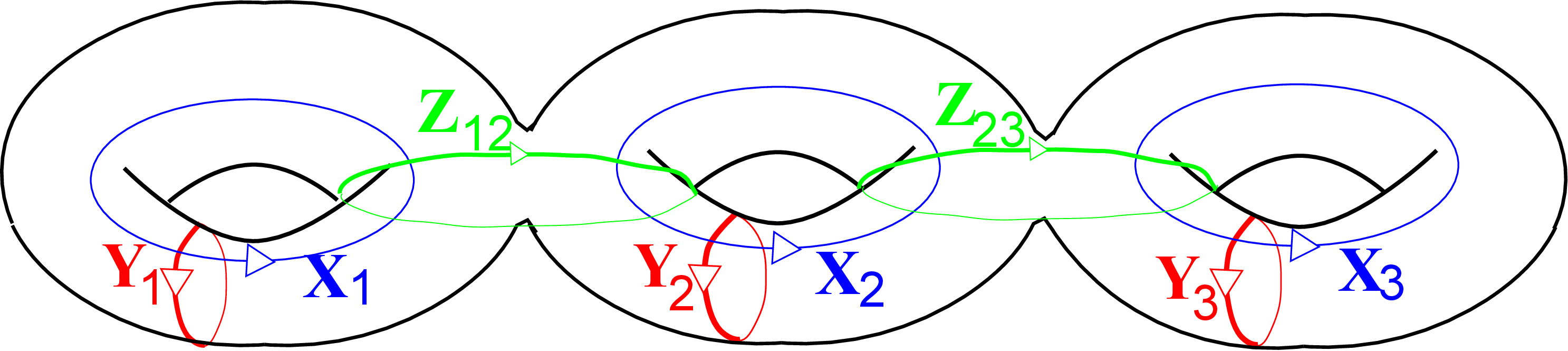}}
\caption{A set of Dehn twists generating the mapping class group and the symplectic group.} 
\label{gen_twists} 
\end{figure} 

\begin{figure}[!h] 
\centerline{\includegraphics[width=11cm]{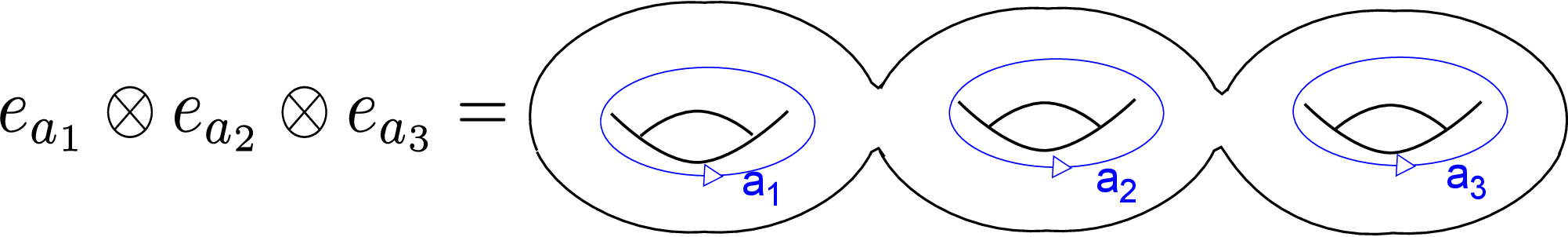}}
\caption{A basis for the abelian skein module of the genus $g$ handlebody. Here an integer $i$ in front of a ribbon means that we take $i$ parallel copies of it.} 
\label{base_tore_solid} 
\end{figure} 

\vspace{2mm} 
\begin{definition} Let $p\geq 2$ and $a,b$ be two integers. We define the Gauss sums by the formulas: \index{Gauss sums}
\begin{enumerate}
\item $G(a,b,p):=\sum_{k\in\zn}{A^{ak^2+bk}}\in \textbf{k}_p$ when $p$ is odd.
\item $G(a,b,2p):=\sum_{k\in\znn}{A^{ak^2+bk}}=2\sum_{k\in\zn}{A^{ak^2+bk}}\in \textbf{k}_p$ when $p$ is even.
\end{enumerate}
\end{definition}
\vspace{1mm}
\par The computation of the Gauss sums is detailed in \cite{BE}.

\begin{proposition}\label{matrix}
The expression of the matrices of the \weil representation on the generators $X_i,Y_i$ and $Z_{i,j}$ in the basis $\{e_{a_1}\otimes \ldots \otimes e_{a_g} | a_1,\ldots,a_g \in \zn \}$ of $U_p^{\otimes g}$ is given by the projective class of the following matrices: 
\begin{itemize}
\item $\pi_{p,1}(X)=(A^{2i^2}\delta_{i,j})_{i,j}$ and $\pi_{p,g}(X_i) = \mathds{1}^{\otimes (i-1)}\otimes \pi_p^1(X) \otimes \mathds{1}^{\otimes (g-i)}$.
\item  $\pi_{p,g}(Z_{i,j})(e_{a_1}\otimes\ldots\otimes e_{a_g}) = A^{(a_i-a_j)^2}(e_{a_1}\otimes\ldots\otimes e_{a_g})$.
\item $ \pi_{p,1}(Y) = \left\{ \begin{array}{ll}  \frac{G(1,0,N)}{N} (A^{-(i-j)^2})_{i,j}, & \mbox{ when } p \mbox{ is odd.}\\ \frac{G(1,0,2N)}{2N} (A^{-(i-j)^2})_{i,j}, & \mbox{ when } p \mbox{ is even.} \end{array} \right. $ \\   $ \pi_{p,g}(Y_i)=  \mathds{1}^{\otimes (i-1)}\otimes \pi_{p,1}(Y) \otimes \mathds{1}^{\otimes (g-i)}$.
\end{itemize}
These generating matrices are unitary (they verify $\bar{U}^TU=\mathbf{1}$ where $\bar{U}=\left( \bar{U_{i,j}}\right)_{i,j}$ is defined by the involution of $\textbf{k}_p$ sending $A$ to $A^{-1}$) so are the \weil representations. 
\end{proposition}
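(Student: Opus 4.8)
The plan is to use the defining Egorov identity $(\ref{egorov})$ together with the projective uniqueness provided by the Stone--Von Neumann theorem: $\pi_{p,g}(\phi)$ is, up to an invertible scalar, the only operator conjugating $\Add_p$ into $\Add_p(\phi\bullet -)$, so it suffices to (i) make the Schr\"odinger representation explicit in the basis $\{e_{a_1}\otimes\cdots\otimes e_{a_g}\}$, (ii) compute the action of the Dehn twists $X_i,Y_i,Z_{i,j}$ on the Heisenberg group $\mathcal H_{p,g}$, and (iii) check that the displayed matrices realise the conjugation prescribed by (ii). A preliminary reduction handles the dependence on $g$: since $X_i$ and $Y_i$ are supported near the $i$-th handle, they fix every element of $H_1(\Sigma_g,\zn)$ carried by the other handles, so $\pi_{p,g}(X_i)$ and $\pi_{p,g}(Y_i)$ commute with $\Add_p$ of the sub-Heisenberg groups attached to those handles; as each of these acts irreducibly on its tensor factor, Schur's lemma forces $\pi_{p,g}(X_i)=\mathds 1^{\otimes(i-1)}\otimes B\otimes\mathds 1^{\otimes(g-i)}$ (and likewise for $Y_i$), with $B$ necessarily realising the genus one instance of $(\ref{egorov})$. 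This reduces the first and third bullets to $g=1$; the diagonal operator $Z_{i,j}$ is treated directly in genus $g$.

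For (i) and (ii) I would fix a symplectic basis $(x_1,\dots,x_g,y_1,\dots,y_g)$ of $H_1(\Sigma_g,\mathbb Z)$ adapted to the handlebody $H_g$ used to define $\Add_p$. Reading off the abelian skein relations of Figure $\ref{relations_skein}$, the classes carried by the handle cores act on $U_p^{\otimes g}$ by shift operators $e_{\vec a}\mapsto e_{\vec a+\vec e_i}$ (possibly multiplied by a monomial in $A$), while the complementary family acts by diagonal operators whose entries are monomials in $A$ depending on a single coordinate $a_i$; the two families satisfy the commutation relation imposed by the group law of $\mathcal H_{p,g}$ and $\Add_p(c)=A\cdot\mathds 1$. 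On homology each Dehn twist acts by the transvection $\gamma\mapsto\gamma+\langle\gamma,c_\phi\rangle c_\phi$ along its core curve $c_\phi$: $X_i$ and $Z_{i,j}$ fix the Lagrangian line acting diagonally in the chosen basis and shear it along $y_i$, resp.\ $y_i-y_j$, whereas $Y_i$ fixes the line acting by shifts. This already explains the shapes in the statement: $\pi_{p,1}(X)$ is a quadratic phase in the given basis, $\pi_{p,g}(Z_{i,j})$ a quadratic phase in the coordinates $a_i-a_j$, and $\pi_{p,1}(Y)$ the corresponding convolution, i.e.\ a circulant matrix.

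Step (iii) is then a direct computation. Conjugating a diagonal operator by $(A^{2i^2}\delta_{i,j})$ leaves it unchanged, matching that $X$ fixes one Lagrangian generator; conjugating a shift by it multiplies the matrix entry from $e_a$ to $e_{a+1}$ by $A^{2((a+1)^2-a^2)}=A^{4a+2}$, so the shift becomes a shift times a diagonal operator, which is exactly the transvection, and the exponent $2i^2$ is the unique one reproducing it. The same bookkeeping along $y_i-y_j$ gives the formula $A^{(a_i-a_j)^2}$ for $Z_{i,j}$. For $Y$ one uses that the circulant matrix $(A^{-(i-j)^2})_{i,j}$ commutes with the shift operators and conjugates a diagonal operator to a diagonal-times-inverse-shift operator, the relevant equalities being evaluations of quadratic Gauss sums; this pins down $\pi_{p,1}(Y)$ projectively, and the scalar $G(1,0,N)/N$ (resp.\ $G(1,0,2N)/2N$) is chosen as the representative that is unitary. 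Unitarity of the remaining generators is immediate: $\pi_{p,1}(X)$ and $\pi_{p,g}(Z_{i,j})$ are diagonal with entries monomials in $A$, which the involution $A\mapsto A^{-1}$ of $k'_p$ sends to their inverses, so $\bar U^TU=\mathbf 1$; for $Y$ one has $(\bar Y^TY)_{i,j}=|G(1,0,N)/N|^2\sum_{k\in\zn}A^{(k-i)^2-(k-j)^2}$, and since $(k-i)^2-(k-j)^2=(j-i)(2k-i-j)$ the inner sum equals $N\delta_{i,j}$ because the pertinent root of unity is primitive; the Gauss sum modulus $|G(1,0,N)|^2=N$ (resp.\ $|G(1,0,2N)|^2=4N$), recorded in $\cite{BE}$, then gives $\bar Y^TY=\mathbf 1$, and in genus $g$ the generating matrices are tensor products of unitaries with identities, hence unitary, so the \weil representation is unitary.

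The main obstacle is not conceptual but lies in the normalisation bookkeeping of steps (i)--(iii): one must fix once and for all the framing coefficients hidden in Figure $\ref{relations_skein}$, the orientation convention defining the intersection form $\omega$, and the sign of each transvection, so that the homology transvections produced by $(\ref{egorov})$ reproduce the displayed exponents $2i^2$, $-(i-j)^2$ and $(a_i-a_j)^2$ precisely rather than up to an undetermined power of $A$; and, for even levels, one must isolate the exact value of the quadratic Gauss sum modulus required to promote the projective class of $\pi_{p,1}(Y)$ to a genuinely unitary representative.
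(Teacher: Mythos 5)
Your proposal is correct in outline but follows a genuinely different route from the paper. You work entirely on the algebraic side: you make the Schr\"odinger representation explicit as shift and diagonal operators, compute the transvections induced by the twists on $\mathcal{H}_{p,g}$, reduce the $g$-dependence of $X_i$ and $Y_i$ to genus one via Schur's lemma applied to the commutant of the sub-Heisenberg groups of the other handles (a valid and rather clean observation), and then verify the Egorov identity $(\ref{egorov})$ for the candidate matrices by hand. The paper instead exploits the topology twice: $X_i$ and $Z_{i,j}$ extend to homeomorphisms of the handlebody $H_g$, so their matrices are read off directly from the skein-theoretic action on the basis (the exponents $2i^2$ and $(a_i-a_j)^2$ are self-linking numbers of the cabled curves, with no convention-chasing needed); and $Y_i$ is obtained as the dual of $X_i$ under the Hopf pairing coming from a Heegaard splitting of $S^3$, giving $\pi_{p,1}(Y)=S\,\pi_{p,1}(X)\,S^{-1}$ with $S=(A^{-2ij})_{i,j}$ and a single Gauss-sum evaluation. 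The trade-off is exactly the one you flag at the end: your intertwining computation determines everything only after the framing contributions to the Heisenberg action and the signs of the transvections have been fixed, whereas the paper's skein computation produces the precise powers of $A$ automatically; conversely, your unitarity verification for $Y$ (via $|G(1,0,N)|^2=N$, resp.\ $|G(1,0,2N)|^2=4N$) is more explicit than the paper's, which merely asserts it. To make your argument a complete proof one would still have to carry out the normalisation bookkeeping you defer, since the proposition claims specific exponents and not just the shape of the matrices.
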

\vspace{2mm}
\begin{proof}
If $\phi \in \Mod(\Sigma_g)$ can be extended to a homeomorphism $\Phi$ of the handlebody $H_g$, the action of $\Phi$ on $\mathcal{T}_p(H_g)\cong U_p^{\otimes g}$ defines an operator which satisfies the Egorov identity $\eqref{egorov}$ so is projectively equal to $\underline{\pi}_{p,g}(\phi)$. The generators $X_i$ and $Z_{i,j}$ are such homeomorphisms and Figure $\ref{rep_weil}$ shows how we compute their action on the basis.
\vspace{1mm}\par
\begin{figure}[!h] 
\centerline{\includegraphics[width=11cm]{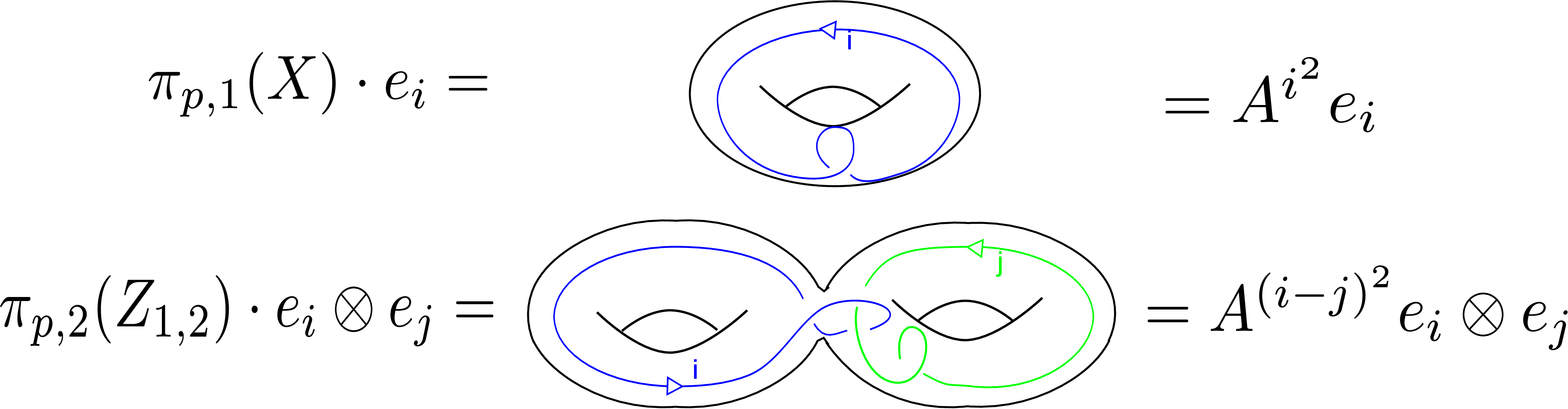} }
\caption{The computation of the matrices associated to ${\pi}_{p,1}(X)$ and ${\pi}_{p,2}(Z_{1,2})$.} 
\label{rep_weil} 
\end{figure} 

\par Then choose a Heegaard splitting of the sphere  $H_g\bigcup_{\phi} H_g \cong S^3$ with $\phi \in \Mod (\Sigma_g)$. This splitting determines a pairing  $\widetilde{\mathcal{T}}_p(H_g)\times \widetilde{\mathcal{T}}_p(H_g) \rightarrow \widetilde{\mathcal{T}}_p(S^3)\cong k'_p$. The associated  bilinear pairing $\left( \cdot,\cdot \right)_p^H : U_p^{\otimes g} \otimes U_p^{\otimes g} \rightarrow \textbf{k}_p$ is called the \textit{Hopf pairing}. Figure $\ref{hopf_pairing}$ shows that: 
$$ \left(e_{a_1}\otimes \ldots \otimes e_{a_g}, e_{b_1}\otimes \ldots \otimes e_{b_g} \right)_p^H = A^{-2\sum_i{a_ib_i}}$$
Thus the Hopf pairing is non degenerate.

\begin{figure}[!h] 
\centerline{\includegraphics[width=11cm]{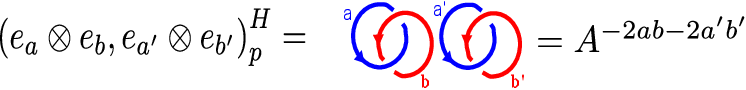}}
\caption{The computation of the matrix associated to the Hopf pairing when $g=2$.} 
\label{hopf_pairing} 
\end{figure} 

\par 
The dual of $\pi_{p,g}(X_i)$ for $\left< \cdot,\cdot \right> ^H$   satisfies the Egorov identity $\eqref{egorov}$, so is projectively equal to $\underline{\pi}_{p,g}(Y_i)$. If $\pi_{p,1}(Y)$ is the dual of $\pi_{p,1}(X)$ for $\left(\cdot ,\cdot \right) ^H$ , the previous expression of $\pi_{p,g}(X_i)$  implies that its dual for the Hopf pairing is  $\mathds{1}^{\otimes (i-1)}\otimes \pi_{p,1}(Y) \otimes \mathds{1}^{\otimes (g-i)}$.  
\par To compute the matrix of $\pi_{p,1}(Y)$, we remark that the matrix $S=\left( A^{-2ij} \right)_{i,j}$ of the Hopf pairing has inverse $S^{-1}=\frac{1}{p}\bar{S}=\frac{1}{p}(A^{2ij})_{i,j}$. A direct computation gives:
$$ \pi_{p,1}(Y) = S \pi_{p,1}(X) S^{-1} = 
\left\{ \begin{array}{ll} \frac{G(1,2(j-i),p)}{p} = \frac{G(1,0,p)}{p} (A^{-(i-j)^2})_{i,j}, &\rm{~ when ~}p\rm{ ~is~ odd;} 
\\ \frac{G(1,2(j-i),2p)}{2p} = \frac{G(1,0,2p)}{2p} (A^{-(i-j)^2})_{i,j}, &\rm{~ when~ }p\rm{ ~is~ even.}
\end{array} \right.$$

\end{proof}
\vspace{2mm}
\begin{remark}
\begin{enumerate}
\item When $p$ is even and $A=\exp \left(-\frac{i\pi}{p} \right)$,  the projective representations we defined here coincide with the ones from \cite{Fu2} and \cite{Go} coming from theta functions. 
\item When $p$ is odd or when $g=1$ and $p$ is even, the Weil representations lift to linear representations of $\Sp$ and $SL_2(\mathbb{Z}/2p\mathbb{Z})$ respectively (see \cite{AR} for a proof and \cite{Koju_thesis} for a proof that the matrices $\pi_{p,g}(X_i), \pi_{p,g}(Y_i)$ and $\pi_{p,g}(Z_{i,j})$, defined in Proposition $\ref{matrix}$ define an explicit lift).
\par When $p$ is even and $g\geq 2$, they lift to linear representations of $\widetilde{Sp_{2g}(\mathbb{Z})}$ a central extension of $Sp_{2g}(\znn)$ by $\mathbb{Z}/2\mathbb{Z}$ (see \cite{FP} for a proof and  \cite{Koju_thesis} for a proof that the matrices above define an explicit lift).
\par We will now consider these linear lifted representations and denote them by $\pi_{p,g}$.
\end{enumerate}
\end{remark}

\vspace{4mm}

\section{Decomposition of the \weil representations}

\vspace{1mm}
\par In this section we prove the three first points of the Theorem $\ref{main_theorem}$.   We first define:
\begin{eqnarray*}
U_p^{+,g} :=  \Span\{e_{a_1}\otimes \ldots \otimes e_{a_g}+e_{-a_1}\otimes \ldots \otimes e_{-a_g}| a_1,\ldots, a_g \in \zn\} \\
U_p^{-,g} :=  \Span\{e_{a_1}\otimes \ldots \otimes e_{a_g}-e_{-a_1}\otimes \ldots \otimes e_{-a_g}| a_1,\ldots, a_g \in \zn\}
\end{eqnarray*}

\vspace{2mm}
\begin{lemma}\label{point3}
\label{point3}
The submodules  $U_p^{+,g}$ and $U_p^{-,g}$ are $\pi_{p,g}$-stable. 
\end{lemma}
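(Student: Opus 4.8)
The plan is to introduce an involution $\iota$ of $U_p^{\otimes g}$ that commutes with every generating matrix of the \weil representation and whose symmetric and antisymmetric parts are exactly $U_p^{+,g}$ and $U_p^{-,g}$. Concretely, I would let $\iota_1 : U_p \to U_p$ be the $k'_p$-linear involution with $\iota_1(e_a) = e_{-a}$ for $a \in \zn$, and set $\iota := \iota_1^{\otimes g}$, so that $\iota(e_{a_1}\otimes\cdots\otimes e_{a_g}) = e_{-a_1}\otimes\cdots\otimes e_{-a_g}$ and $\iota^2 = \mathds{1}$. By definition every element of $U_p^{+,g}$ (resp.\ $U_p^{-,g}$) is a $k'_p$-linear combination of vectors of the form $v + \iota v$ (resp.\ $v - \iota v$); hence any $T \in \GL(U_p^{\otimes g})$ with $T\iota = \iota T$ automatically stabilises both submodules, because $T(v \pm \iota v) = Tv \pm \iota(Tv)$. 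So the lemma reduces to showing that $\iota$ commutes with the generators.

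The next step is the reduction to the generators $\pi_{p,g}(X_i)$, $\pi_{p,g}(Y_i)$ and $\pi_{p,g}(Z_{i,j})$ of Proposition \ref{matrix}: these generate the \weil representation since the corresponding Dehn twists generate $\Mod(\Sigma_g)$, and a submodule stable up to an invertible scalar under an operator is stable under that operator, so the projective nature of $\underline{\pi}_{p,g}$ is harmless. Then I would check commutation with $\iota$ case by case. For $Z_{i,j}$ it is immediate from $\pi_{p,g}(Z_{i,j})(e_{a_1}\otimes\cdots\otimes e_{a_g}) = A^{(a_i-a_j)^2}(e_{a_1}\otimes\cdots\otimes e_{a_g})$, since $(a_i - a_j)^2$ is unchanged when every $a_k$ is replaced by $-a_k$. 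For $X_i$ and $Y_i$, which have the tensor-factorised form $\mathds{1}^{\otimes(i-1)}\otimes\pi_{p,1}(\,\cdot\,)\otimes\mathds{1}^{\otimes(g-i)}$, and since $\iota = \iota_1^{\otimes g}$ while $\mathds{1}$ trivially commutes with $\iota_1$, it is enough to see that $\pi_{p,1}(X)$ and $\pi_{p,1}(Y)$ commute with $\iota_1$: the former because the diagonal entries $A^{2i^2}$ of $\pi_{p,1}(X)$ depend only on $i^2$, the latter because conjugating the matrix $(c\,A^{-(i-j)^2})_{i,j}$ of $\pi_{p,1}(Y)$ by $\iota_1$ replaces the $(i,j)$-entry by the $(-i,-j)$-entry, and $(-i-(-j))^2 = (i-j)^2$.

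Assembling these observations gives that each generating matrix commutes with $\iota$, hence preserves $U_p^{+,g}$ and $U_p^{-,g}$, which is the claim. I do not expect a genuine obstacle here: the only points requiring a word of care are the reduction from projective to honest stability (invertible scalars preserve every submodule) and the compatibility of $\iota = \iota_1^{\otimes g}$ with the tensor-product form of the generators $X_i$ and $Y_i$; the arithmetic inputs, namely that $i^2$ and $(i-j)^2$ are invariant under $i \mapsto -i$, $j \mapsto -j$, are trivial.
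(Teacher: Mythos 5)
Your argument is correct and is exactly the "direct computation" the paper invokes: the paper's proof simply asserts that $U_p^{g,\pm}$ are stabilized by the generators $\pi_{p,g}(X_i)$, $\pi_{p,g}(Y_i)$, $\pi_{p,g}(Z_{i,j})$, which is what you verify by showing each commutes with the involution $\iota = \iota_1^{\otimes g}$. Your write-up just makes explicit the two small points the paper leaves implicit (harmlessness of projective scalars, and the invariance of $i^2$ and $(i-j)^2$ under negation).
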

 
\begin{proof} A direct computation shows that the submodules $U_p^{g,+}$ and $U_p^{g,-}$ are stabilized by $\pi_{p,g}(X_i), \pi_{p,g}(Y_i)$ and $\pi_{p,g}(Z_{i,j})$. We can also remark that the involution acting on the reduced skein module by changing the orientation of a framed link, commutes with the image of $\pi$. The modules $U_p^{\pm,g}$ correspond to its two eigenspaces.

\end{proof}

\vspace{3mm}
\par   Let $a, b\geq 2$   be two coprime non negative integers with $b$ odd, and let $u$ and $v$ be odd integers such that $au+bv=1$ in the case where $a$  is odd  and such that $2au+bv=1$ if $a$ is even and $b$ is odd. We define a ring isomorphism $\mu : \textbf{k}_{ab}\rightarrow \textbf{k}_a\otimes \textbf{k}_b$ by $\mu(A)=(A^{vb},A^{au})$ if $a$ is odd and $\mu(A)=(A^{vb},A^{2au})$ if $a$ is even,   which turns $U_a^{\otimes g}\otimes U_b^{\otimes g}$ into a $\textbf{k}_{ab}$-module. We also denote by $f:\mathbb{Z}/a\mathbb{Z}\times \mathbb{Z}/b\mathbb{Z} \rightarrow \mathbb{Z}/{ab}\mathbb{Z}$ the  bijection sending $(x,y)$ to $xv+yu$ when $a$ is odd and to $xv+2yu$ when $a$ is even.
\vspace{2mm} The following lemma was shown in \cite{KR}, we give a more explicit proof.
\begin{lemma}[\cite{KR}]\label{point1}
 The isomorphism of $\textbf{k}_{ab}$-module $\psi :U_a^{\otimes g}\otimes U_b^{\otimes g} \rightarrow U_{ab}^{\otimes g}$ defined by $$\psi((e_{a_1}\otimes\ldots \otimes e_{a_g})\otimes (e_{b_1}\otimes\ldots \otimes e_{b_g}))=e_{f(a_1,b_1)}\otimes\ldots\otimes e_{f(a_g,b_g)}$$ makes the following diagram commute for all $\phi\in \ssp$ (resp for all $\phi\in \sph$ when $a$ is even): 
\begin{diagram}
U_a^{\otimes g}\otimes U_b^{\otimes g} &\rTo^{\psi} &U_{ab}^{\otimes g}\\
_{\pi_{a,g}(\phi)\otimes \pi_{b,g}(\phi)}\uTo & &\uTo_{\pi_{ab,g}(\phi)}\\
U_a^{\otimes g}\otimes U_b^{\otimes g} &\rTo^{\psi} &U_{ab}^{\otimes g}
\end{diagram}  
\end{lemma}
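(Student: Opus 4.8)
The plan is to verify the commutativity of the diagram on the explicit generators $X_i$, $Y_i$, $Z_{i,j}$ of $\ssp$ (resp.\ $\sph$), using the matrix formulas of Proposition \ref{matrix}. Since $\psi$ is an isomorphism of $k'_{ab}$-modules by construction and the diagram is tensorial in the genus index, it suffices to treat $g=1$: the genus $g$ generators act as $\mathds{1}^{\otimes(i-1)}\otimes(\text{genus one operator})\otimes\mathds{1}^{\otimes(g-i)}$ on both sides, and $\psi$ is built factorwise from the bijection $f$, so the general case follows by tensoring the $g=1$ identity with identities. For $Z_{i,j}$ one reduces similarly to a two-tensor computation.

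First I would treat the twist $X$. On the left, $\pi^1_a(X)\otimes\pi^1_b(X)$ sends $e_x\otimes e_y$ to $A_a^{2x^2}A_b^{2y^2}\,e_x\otimes e_y$, and under the ring isomorphism $\mu$ this scalar corresponds to an element of $k'_{ab}$; on the right, $\pi^1_{ab}(X)$ multiplies $e_{f(x,y)}$ by $A_{ab}^{2f(x,y)^2}$. So the key arithmetic fact to check is that, with $f(x,y)=xv+yu$ (resp.\ $xv+2yu$) and the definition $\mu(A)=(A^{vb},A^{au})$ (resp.\ $(A^{vb},A^{2au})$), one has in $k'_{ab}$ the congruence $2f(x,y)^2 \equiv 2vb\,x^2 + 2au\,y^2$ modulo the order relations ($p$-th or $2p$-th powers kill the cross terms), using $au+bv=1$ (resp.\ $2au+bv=1$) together with $v^2b^2\equiv vb$, $u^2\equiv u$ type identities coming from $vb\equiv 1 \pmod a$ and $ua\equiv 1\pmod b$. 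This is the computational heart of the lemma and the step I expect to be the main obstacle: one must be careful that the cross term $2\cdot xv\cdot yu\cdot\ldots$ is annihilated because $A_{ab}^{ab\cdot(\text{something})}$ is trivial in $k'_{ab}$ (recall the skein relation killing $p$ parallel copies), and one must separately handle the parity bookkeeping in the case $a$ even, $b$ odd, where the relevant modulus is $2ab$ and the factor $2$ in $\mu(A)=(A^{vb},A^{2au})$ and in $f(x,y)=xv+2yu$ is exactly what makes things match.

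Next I would do $Z_{i,j}$, which is the same computation with $2x^2$ replaced by $(x-x')^2$: here I need $f(x_1,y_1)-f(x_2,y_2) = (x_1-x_2)v + (y_1-y_2)u$ (resp.\ with $2u$), so that $(f(x_1,y_1)-f(x_2,y_2))^2$ again splits, modulo the order relation, as $vb(x_1-x_2)^2 + au(y_1-y_2)^2$ (resp.\ with the factor $2$), matching $\mu$ applied to the eigenvalue. Finally, for $Y$ I would use that $\pi^1_p(Y)$ is (projectively) the conjugate $S\pi^1_p(X)S^{-1}$ of $\pi^1_p(X)$ by the Hopf pairing matrix $S=(A^{-2ij})$, as shown in the proof of Proposition \ref{matrix}; since the diagram already commutes for $X$ and for the Hopf pairings (the pairing identity $(e_x\otimes e_y,e_{x'}\otimes e_{y'})\mapsto A^{-2(xx'+yy')}$ transports correctly under $\psi$ and $f$ by the same arithmetic), it commutes for $Y$ as well. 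Throughout, all equalities are up to an invertible scalar, which is harmless since we work with projective representations; this is why the Gauss-sum prefactors in $\pi^1_p(Y)$ need not be tracked. Assembling the three generator cases and invoking that $X_i,Y_i,Z_{i,j}$ generate $\ssp$ (resp.\ that their lifts generate $\sph$ when $a$ is even) completes the proof.
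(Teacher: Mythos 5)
Your proposal is correct in outline and, for the generators $X_i$ and $Z_{i,j}$, is essentially identical to the paper's proof: both reduce to the arithmetic fact that $f(x,y)^2\equiv vb\,x^2+au\,y^2$ (resp.\ the variant with the factor $2$) modulo the order of $A$, because the cross term carries a factor $ab$. Where you genuinely diverge is the generator $Y_i$. The paper computes $\pi_{ab,g}(Y_i)$ directly on the basis, performs the change of variable $m=f(k,l)$ in the double sum, and then proves the Gauss-sum multiplicativity $\psi(c_a c_b)=c_{ab}$, i.e.\ $\psi\bigl(G(1,0,a)G(1,0,b)\bigr)=G(1,0,ab)$ (resp.\ the even variant), so that the diagram commutes \emph{on the nose}. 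You instead deduce the $Y$ case from the $X$ case by conjugating with the Hopf-pairing matrices, declaring the scalar prefactors irrelevant "since we work with projective representations." Be careful here: the lemma's distinction between $\ssp$ and $\sph$ (when $a$ is even) only makes sense at the level of the linearized representations of Proposition \ref{matrix}, and the lemma is used later (Lemma \ref{tata}) to multiply character sums, so the intended statement is the exact linear one, not merely the projective one. The good news is that your route actually delivers exactness if you push it: since $\pi_{p,1}(Y)=S\,\pi_{p,1}(X)\,S^{-1}$ is an exact identity in the paper, and since $\psi$ intertwines $S_a\otimes S_b$ with $S_{ab}$ exactly (the entries $A_1^{-2xx'}A_2^{-2yy'}$ and $A^{-2f(x,y)f(x',y')}$ agree by the same cross-term cancellation), conjugation gives $\psi\circ(\pi_a(Y)\otimes\pi_b(Y))=\pi_{ab}(Y)\circ\psi$ exactly, and the Gauss-sum identity $\psi(c_ac_b)=c_{ab}$ falls out as a corollary rather than needing a separate proof. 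So I would not call this a gap, but you should replace the "up to an invertible scalar" disclaimer by the exact verification of the Hopf-pairing compatibility; as written, your argument proves slightly less than the lemma asserts.
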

\vspace{4mm}
\begin{proof}

We note  $(A_1,A_2):=(A^{vb},A^{au})$ when $a$ and $b$ are odd and $(A_1,A_2)=(A^{vb},A^{2au})$ when $a$ is even. It is enough to show the commutativity of the diagram for $\phi=X_i,Y_i$ and $Z_{i,j}$. For $\phi=X_i$, we compute:
\begin{multline*}
\psi\left(\pi_{a,g}(X_i)\otimes \pi_{b,g}(X_i)((e_{a_1}\otimes\ldots\otimes e_{a_g})\otimes(e_{b_1}\otimes\ldots\otimes e_{b_g}))\right)=
\\  \psi \left( A_1^{a_i^2}A_2^{b_i^2} ((e_{a_1}\otimes\ldots\otimes e_{a_g})\otimes(e_{b_1}\otimes\ldots\otimes e_{b_g}))\right)=
\\  A^{f(a_i,b_i)^2} (e_{f(a_1,b_1)}\otimes \ldots \otimes e_{f(a_g,b_g)})
\end{multline*}
\vspace{2mm}
\par Then for $\phi=Y_i$, we note $c_p=\frac{G(1,0,p)}{p}$ when $p$ is odd and $c_p=\frac{G(1,0,2p)}{2p}$ when $p$ is even:
\begin{multline*}
\psi\left(\pi_{a,g}(Y_i)\otimes \pi_{b,g}(Y_i)((e_{a_1}\otimes\ldots\otimes e_{a_g})\otimes(e_{b_1}\otimes\ldots\otimes e_{b_g}))\right)
\\ = \psi \left( c_a c_b \sum_{\stackrel{k\in \mathbb{Z}/a\mathbb{Z}}{ l\in \mathbb{Z}/b\mathbb{Z}}}{A_1^{-(a_i-k)^2}A_2^{-(b_i-l)^2} ((e_{a_1}\otimes\ldots\otimes e_k \otimes \ldots \otimes e_{a_g})\otimes(e_{b_1}\otimes\ldots\otimes e_l \otimes \ldots\otimes e_{b_g}))}\right)
\\ = \psi(c_a c_b) \sum_{m\in \mathbb{Z}/ab\mathbb{Z}}{A^{-(f(a_i,b_i)-m)^2} (e_{f(a_1,b_1)}\otimes \ldots \otimes e_m\otimes \ldots \otimes e_{f(a_g,b_g)})}
\end{multline*}
where we made the change of variable $m=f(k,l)$ to pass to the last line. We conclude by noticing that $\psi(c_a c_b)=c_{ab}$ which is equivalent to $\psi(G(1,0,a)G(1,0,b))=G(1,0,ab)$ when $a$ is odd and  $\psi(G(1,0,2a)G(1,0,b))=G(1,0,2ab)$ when $a$ is even.
\vspace{2mm}
\par Finally for $\phi=Z_{i,j}$: 
\begin{multline*}
\psi\left(\pi_{a,g}(Z_{i,j})\otimes \pi_{b,g}(Z_{i,j})((e_{a_1}\otimes\ldots\otimes e_{a_g})\otimes(e_{b_1}\otimes\ldots\otimes e_{b_g}))\right)
\\ = \psi \left( A_1^{(a_i-a_j)^2}A_2^{(b_i-b_j)^2} ((e_{a_1}\otimes\ldots\otimes e_{a_g})\otimes(e_{b_1}\otimes\ldots\otimes e_{b_g}))\right)
\\ = A^{f(a_i,b_i)^2} (e_{f(a_1,b_1)}\otimes \ldots \otimes e_{f(a_g,b_g)})
\end{multline*}
\end{proof}
\vspace{2mm}
\begin{remark}
This lemma also follows from (\cite{MOO},  Proposition $2.3$) where it is showed that the $3$-manifold invariant coming from the abelian TQFT at level $ab$, with $a$ coprime to $b$, is the product of the ones in level $a$ and $b$. We can then conclude using the same argument that in \cite{BHMV2}.

\end{remark}

\vspace{5mm}
\par  Let $r$ be a prime number and $n\geq 0$ if $r$ is odd or $n\geq 1$ if $r=2$. Let $\bar{U}^{\otimes g}_{r^{n}}$  be the submodule of $U^{\otimes g}_{r^{n+2}}$ spanned by the vectors $g_{a_1}\otimes\ldots\otimes g_{a_g}$ where  $g_i:=\sum_{0\leq k \leq r-1} e_{r(i+kr^{n})}$. 
\vspace{2mm}
\begin{lemma}\label{point2}
The submodule $\bar{U}^{\otimes g}_{r^n} $ is stabilized by $\pi_{{r^{n+2}},g}$. Moreover the isomorphism of $\textbf{k}_{r^{n+2}}$-modules $\psi : U_{r^{n}}^{\otimes g}\rightarrow \bar{U}^{\otimes g}_{r^{n}}$ sending $e_{a_1}\otimes\ldots\otimes e_{a_g}$ to $g_{a_1}\otimes\ldots\otimes g_{a_g}$ makes the following diagram commute for all $\phi\in \ssp$ ( for all $\phi\in \sph$ when $r=2$ respectively):
\begin{diagram}
\GL(U^{\otimes g}_{r^{n+2}})^{\bar{U}^{\otimes g}_{r^{n}}}&\rTo^{\pi_{r^{n+2},g}(\phi)} &\GL(U^{\otimes g}_{r^{n+2}})^{\bar{U}^{\otimes g}_{r^{n}}}\\
\uInto & &\uInto\\
\GL(U^{\otimes g}_{r^{n}}) &\rTo^{\pi_{r^{n},g}(\phi)} &\GL(U^{\otimes g}_{r^{n}})
\end{diagram}
\end{lemma}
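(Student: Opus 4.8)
The plan is to mimic the proof of Lemma~\ref{point1}. Since the elements $g_{a_1}\otimes\cdots\otimes g_{a_g}$ span $\bar U^{\otimes g}_{r^n}$ and $X_i,Y_i,Z_{i,j}$ generate $\ssp$ (resp.\ $\sph$ when $r=2$), it suffices to compute $\pi_{r^{n+2},g}(\phi)\bigl(g_{a_1}\otimes\cdots\otimes g_{a_g}\bigr)$ for $\phi\in\{X_i,Y_i,Z_{i,j}\}$ and to check that it equals $\psi\bigl(\pi_{r^n,g}(\phi)(e_{a_1}\otimes\cdots\otimes e_{a_g})\bigr)$, the target $U^{\otimes g}_{r^n}$ being viewed as a $k'_{r^{n+2}}$-module through $\mu(A)=A^{r^2}$. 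This will simultaneously show that $\pi_{r^{n+2},g}(\phi)$ maps a spanning family of $\bar U^{\otimes g}_{r^n}$ into $\bar U^{\otimes g}_{r^n}$ (the stability claim) and that $\psi$ intertwines the two actions on the generators, hence on all of $\ssp$ (resp.\ $\sph$), which is precisely the commutativity of the diagram. As $X_i$ and $Y_i$ act by $\mathds{1}^{\otimes(i-1)}\otimes\pi_{r^{n+2},1}(\cdot)\otimes\mathds{1}^{\otimes(g-i)}$ and $\psi$ is a tensor product of genus-one maps, the verifications for $X_i$ and $Y_i$ reduce to $g=1$; the operator $Z_{i,j}$ is diagonal and is handled directly.

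The computations all rest on one elementary remark. Put $N=r^{n+2}$, so that $A$ has order $N$ in $k'_N$ when $r$ is odd and order $2N$ when $r=2$. The index occurring in $g_i$ is $r(i+kr^n)=ri+kr^{n+1}$, and for any integer $d$ one has $(d+kr^{n+1})^2\equiv d^2+2dk\,r^{n+1}\pmod{\mathrm{ord}(A)}$, the term $k^2r^{2n+2}$ being killed because $2n+2\geq n+2$ (resp.\ $2n+2\geq n+3$ when $r=2$, which is exactly why $r=2$ requires $n\geq 1$). Summing the resulting phases over $k=0,\dots,r-1$ therefore produces $\sum_{k=0}^{r-1}\bigl(A^{2r^{n+1}}\bigr)^{dk}$, where $A^{2r^{n+1}}$ has order dividing $r$ (a primitive $r$-th root of unity for $r$ odd, equal to $-1$ for $r=2$); this sum equals $r$ if $r\mid d$ and $0$ otherwise. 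For $\pi_{r^{n+2},1}(X)$ and $\pi_{r^{n+2},g}(Z_{i,j})$, which act on $e_c$ by $A^{2c^2}$ and by $A^{(c_i-c_j)^2}$, substituting $c=r(i+kr^n)$ makes all $k$-dependence disappear modulo $\mathrm{ord}(A)$; hence $g_i$ (resp.\ $g_{a_1}\otimes\cdots\otimes g_{a_g}$) is an eigenvector with eigenvalue $A^{2r^2i^2}=\mu(A)^{2i^2}$ (resp.\ $\mu(A)^{(a_i-a_j)^2}$), which is $\mu$ applied to the level-$r^n$ eigenvalue, finishing these two cases.

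The generator $Y_i$ is the main case, and the one I expect to cost the most work. Writing $c_p=\frac{G(1,0,p)}{p}$ for $p$ odd and $c_p=\frac{G(1,0,2p)}{2p}$ for $p$ even as in Lemma~\ref{point1}, expand $\pi_{r^{n+2},1}(Y)(g_i)=c_N\sum_{k=0}^{r-1}\sum_{b}A^{-(ri+kr^{n+1}-b)^2}e_b$ and apply the remark with $d=ri-b$: the inner sum over $k$ vanishes unless $r\mid b$, and writing $b=rb'$ the coefficient of $e_{rb'}$ is $c_N\,r\,A^{-r^2(i-b')^2}=c_N\,r\,\mu(A)^{-(i-b')^2}$, which depends only on $b'$ modulo $r^n$, so the output is $\sum_{j\in\mathbb{Z}/r^n\mathbb{Z}}c_N\,r\,\mu(A)^{-(i-j)^2}g_j$. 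Comparing with $\psi$ applied to $\mu$ of the level-$r^n$ formula $\pi_{r^n,1}(Y)(e_i)=c_{r^n}\sum_j A^{-(i-j)^2}e_j$ (so that the scalar becomes $\mu(c_{r^n})$ and each $A^{-(i-j)^2}$ becomes $\mu(A)^{-(i-j)^2}$), equality holds exactly when $c_N\,r=\mu(c_{r^n})$, i.e.\ when $G(1,0,r^{n+2})=r\,\mu\bigl(G(1,0,r^n)\bigr)$ for $r$ odd and $G(1,0,2^{n+3})=2\,\mu\bigl(G(1,0,2^{n+1})\bigr)$ for $r=2$. I would prove these Gauss-sum identities by the same index splitting: writing the summation variable as $u+r^{n+1}w$ with $u$ modulo $r^{n+1}$ and $w$ modulo $r$ (modulo $4$ when $r=2$), the cross term again kills every $u$ with $r\nmid u$ and leaves $r$ (resp.\ $4$) times the sum over $u=rm$, which becomes $\mu$ of the lower Gauss sum once $A^{r^2}$ is recognised as $\mu(A)$. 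The only delicate point throughout is the bookkeeping: keeping straight the modulus $\mathrm{ord}(A)$ versus the size of the index set at each reduction, together with the convention $G(\cdot,\cdot,2p)=2\sum_{k\in\mathbb{Z}/p\mathbb{Z}}A^{\cdots}$ in the even case. Once that is fixed the rest is routine, and, as announced, the stability of $\bar U^{\otimes g}_{r^n}$ falls out of the same computations.
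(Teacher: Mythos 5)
Your proof is correct, and for the commutativity of the diagram it follows the paper's own route exactly: verification on the generators $X_i, Y_i, Z_{i,j}$, with $X_i$ and $Z_{i,j}$ acting diagonally (the cross terms and the $k^2r^{2n+2}$ term dying modulo the order of $A$, which is where $n\geq 1$ for $r=2$ enters), and with the $Y_i$ case reducing to the Gauss-sum identity $\mu(c_{r^n}) = r\,c_{r^{n+2}}$, which the paper also states and which your index-splitting argument proves correctly (including the factor-of-two convention in $G(\cdot,\cdot,2p)$). The one place you genuinely diverge is the stability claim: the paper proves it by a separate, more structural argument, generalizing \cite{CNS} — it exhibits $\bar U^{\otimes g}_{r^n}$ as the common fixed space of the $\ssp$-stable subgroup $D=(I\times I,0)$ of the Heisenberg group, where $I=r^{n+1}H_1(\Sigma_g,\mathbb{Z}/r^{n+2}\mathbb{Z})$, and invokes the Egorov identity — whereas you extract stability as a byproduct of the same generator computations (which is legitimate: the intertwining relation $\pi_{r^{n+2},g}(\phi)\circ\psi=\psi\circ\pi_{r^n,g}(\phi)$ propagates from generators to products and inverses, so the image of $\psi$ is preserved by the whole group). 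Your version is more elementary and avoids a second mechanism, at the cost of the conceptual identification of $\bar U^{\otimes g}_{r^n}$ as an isotypic/fixed subspace for a subgroup of the Heisenberg group, which is the viewpoint the paper reuses elsewhere (e.g.\ in the commutant computations of Proposition \ref{prop_ciz}).
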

\vspace{4mm}
\begin{proof}
We generalize an argument of  \cite{CNS} to even levels to show that $\bar{U}^{\otimes g}_{r^n}$ is $\pi_{{r^{n+2}},g}$-stable. Denote by $I$ the principal ideal $I:=r^{n+1}H_1(\Sigma^g,\mathbb{Z}/r^{n+2}\mathbb{Z})$ of $H_1(\Sigma^g,\mathbb{Z}/r^{n+2}\mathbb{Z})$ and by $D$  the subgroup $D:=(I\times I, 0)$ of $\mathcal{H}_{r^{n+2},g}$. Since $I^2=\{ 0\} $ and $I$ is an ideal, $D$ is a subgroup of $\mathcal{H}_{r^{n+2},g}$ stable under the action of $\ssp$. We deduce from the Egorov identity that the space $\{ v\in U_{r^{n+2}}^{\otimes g} | \Add_p(\phi)v=v, \forall \phi \in D\}$ is preserved by $\pi_{{r^{n+2}},g}$. We now easily show that this space is  $\bar{U}^{\otimes g}_{r^n}$.
\vspace{3mm}
\par We then verify the commutativity of the diagram for $\phi=X_i, Y_i$ and $Z_{i,j}$. When $\phi=X_i$ we have:
$$
\pi_{{r^{n+2}},g}(X_i) (g_{a_1}\otimes \ldots \otimes g_{a_g}) = A^{(ri)^2} (g_{a_1}\otimes \ldots \otimes g_{a_g})
 = \mu(A)^{i^2} (g_{a_1}\otimes \ldots \otimes g_{a_g})
$$
\vspace{1mm}
\par When $\phi=Y_i$ we have:
\begin{multline*}
\pi_{{r^{n+2}},g}(Y_i) (g_{a_1}\otimes \ldots \otimes g_{a_g}) = c_{r^{n+2}}\sum_{x\in \mathbb{Z}/r^{n+2}}\sum_{k\in \mathbb{Z}/r\mathbb{Z}}{A^{-(r(a_i+kr^n)-x)^2}g_{a_1}\otimes \ldots \otimes e_x\otimes \ldots \otimes e_{a_g}}
\\ = c_{r^{n+2}}\sum_{x\in\mathbb{Z}/r^{n+2}\mathbb{Z}} A^{-x^2-xra_i-r^2a_i^2}\left( \sum_{k\in\mathbb{Z}/r\mathbb{Z}}{(A^{2r^(n+1)x})^k}\right) g_{a_1}\otimes \ldots \otimes e_x \otimes \ldots \otimes g_{a_g}
\\  = r c_{r^{n+2}}\sum_{y\in \mathbb{Z}/r^{n+1}\mathbb{Z}} (A^{r^2})^{-(y-a_i)^2} g_{a_1}\otimes \ldots \otimes e_{ry}\otimes \ldots \otimes g_{a_g}
\\ = r c_{r^{n+2}} (\mu(A))^{-(z-a_i)^2} \sum_{z\in \mathbb{Z}/r^n \mathbb{Z}} g_{a_1}\otimes \ldots \otimes g_z \otimes \ldots \otimes g_{a_k}
\end{multline*}
We verify that $\mu(c_{r^n})=r c_{r^{n+2}}$ to conclude in this case. Finally when $\phi=Z_{i,j}$:
\begin{multline*}
\pi_{{r^{n+2}},g}(Z_{i,j}) (g_{a_1}\otimes \ldots \otimes g_{a_g})
\\ = \sum_{k,l\in \mathbb{Z}/p\mathbb{Z}} A^{(r(a_i+kr^n)-p(a_j+lr^n))^2} (g_{a_1}\otimes \ldots e_{r(a_i+kr^n)} \otimes \ldots \otimes e_{r(a_j+lr^n)}\otimes \ldots \otimes g_{a_g})
\\ =  \sum_{k,l\in \mathbb{Z}/p\mathbb{Z}} (A^{r^2})^{(a_i-a_j)^2} (g_{a_1}\otimes \ldots e_{r(a_i+kr^n)} \otimes \ldots \otimes e_{r(a_j+lr^n)}\otimes \ldots \otimes g_{a_g})
\\ = (\mu(A)^{(a_i-a_j)^2} (g_{a_1}\otimes \ldots \otimes g_{a_g})
\end{multline*}

\end{proof}
Let  $W_{r^{n+2}}$ be the submodule of $U_{r^n}$ orthogonal for the invariant form turning $\{e_0,\ldots, e_{r^{n+2}-1}\}$ into an orthogonal basis. It is freely generated by the vectors $e_i$ when $r$ does not divide $i$ and by the vectors $e_{ri-r(i+k+r^n)}$ for $i\in \{0,\ldots,r^n-1 \}$ and $k\in\{1,\ldots, r-1 \}$.
\par The orthogonal of $\bar{U}_{r^n}^{\otimes g}$ in $U_{r^{n+2}}^{\otimes g}$ is isomorphic to $W_{r^{n+2}}^{\otimes g}$ and is stabilized by $\pi_{{r^{n+2}},g}$. So are the two submodules $W_{r^{n+2}}^{g,\pm} := W_{r^{n+2}}^{\otimes g} \bigcap U_{r^{n+2}}^{g,\pm}$.
\vspace{4mm}

\section{Irreducibility of the factors}
\subsection{The genus one cases}

\vspace{2mm}
\par The goal of this section is to extend Kloosterman's work \cite{Kl} to even levels.
\vspace{2mm}\par
 When $g=1$ the strategy for the proof lies on the computation of the following Kloosterman's sums:
\begin{eqnarray}\label{sums}
\textbf{S}_p &:= \frac{1}{|\Sp|}\sum_{\phi\in \Sp}{|\tr(\pi_p(\phi))|^2}, & \mbox{ when } p \mbox{ is odd.} \\
\textbf{S}_{2p} &:= \frac{1}{|\Spp|}\sum_{\phi\in \Spp}{|\tr(\pi_p(\phi))|^2} ,& \mbox{ when } p \mbox{ is even.}
\end{eqnarray}
 It is a classical fact that if this sum is equal to the number of component in a decomposition of $\pi_p$ then each factors appearing in this decomposition is irreducible and they are pairwise distinct (see  \cite{Serre}, chapter $2$). 
\begin{lemma}\label{tata}
If $a$ is prime to $b$ then $\textbf{S}_{ab}=\textbf{S}_a \times \textbf{S}_b$ if they are both odd and $\textbf{S}_{2ab}=\textbf{S}_{2a}\times \textbf{S}_b$ if $a$ is even.
\end{lemma}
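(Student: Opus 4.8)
The plan is to deduce the multiplicativity of $\mathbf{S}$ from the genus one case of Lemma \ref{point1} (which makes the traces multiplicative) together with the Chinese remainder theorem for $SL_2$.

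First I would specialise Lemma \ref{point1} to $g=1$. For every $\phi$ in $\ssp$ (resp. in $\sph$ when $a$ is even) the isomorphism $\psi$ conjugates $\pi_{ab}(\phi)$ to $\pi_a(\phi)\otimes\pi_b(\phi)$, whence
$$\tr\big(\pi_{ab}(\phi)\big)=\tr\big(\pi_a(\phi)\big)\,\tr\big(\pi_b(\phi)\big)$$
for matching choices of the projective representatives. Since by Proposition \ref{matrix} these representatives may be taken unitary, the residual scalar ambiguity in each $\pi_p(\phi)$ has modulus one; hence $\phi\mapsto|\tr(\pi_p(\phi))|^2$ is a well-defined function of the image of $\phi$ in the finite symplectic group through which $\pi_p$ factors (namely $SL_2(\mathbb{Z}/p\mathbb{Z})$ for $p$ odd and $SL_2(\mathbb{Z}/2p\mathbb{Z})$ for $p$ even, using for $g=1$ the linear lift of Section \ref{linea_section}), and the identity above descends to
$$\big|\tr\big(\pi_{ab}(\phi)\big)\big|^2=\big|\tr\big(\pi_a(\phi)\big)\big|^2\;\big|\tr\big(\pi_b(\phi)\big)\big|^2 .$$

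Next I would invoke the surjectivity of the reduction map $SL_2(\mathbb{Z})\twoheadrightarrow SL_2(\mathbb{Z}/N\mathbb{Z})$ and the Chinese remainder isomorphism $SL_2(\mathbb{Z}/mm'\mathbb{Z})\cong SL_2(\mathbb{Z}/m\mathbb{Z})\times SL_2(\mathbb{Z}/m'\mathbb{Z})$ valid for $\gcd(m,m')=1$, under which reduction mod $mm'$ corresponds to the pair of reductions mod $m$ and mod $m'$. When $a$ and $b$ are both odd I take $(m,m')=(a,b)$; when $a$ is even and $b$ is odd I take $(m,m')=(2a,b)$, which are coprime since $b$ is odd and prime to $a$, so that $2ab=(2a)\cdot b$. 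In either case, as $\phi$ ranges over the group indexing $\mathbf{S}_{ab}$ (resp. $\mathbf{S}_{2ab}$) its pair of reductions ranges, each value attained exactly once, over the product of the two groups indexing $\mathbf{S}_a$ and $\mathbf{S}_b$ (resp. $\mathbf{S}_{2a}$ and $\mathbf{S}_b$), while $|\tr(\pi_a(\phi))|^2$ depends only on the first reduction and $|\tr(\pi_b(\phi))|^2$ only on the second.

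Finally I would plug the factorisation of $|\tr(\pi_{ab}(\phi))|^2$ into the definition of $\mathbf{S}_{ab}$ (resp. $\mathbf{S}_{2ab}$) and split the double sum of a product into the product of the two single sums, a routine Fubini step, obtaining $\mathbf{S}_{ab}=\mathbf{S}_a\,\mathbf{S}_b$ (resp. $\mathbf{S}_{2ab}=\mathbf{S}_{2a}\,\mathbf{S}_b$). The one place where a little care is needed is the passage from the projective statement of Lemma \ref{point1} to a genuine identity of squared trace moduli on the finite groups; everything else is the elementary arithmetic of $SL_2$ over $\mathbb{Z}/N\mathbb{Z}$ and the bookkeeping of the sums.
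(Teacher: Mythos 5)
Your proposal is correct and follows essentially the same route as the paper, whose proof is just the one-line observation that the Chinese remainder isomorphism $SL_2(\mathbb{Z}/ab\mathbb{Z})\cong SL_2(\mathbb{Z}/a\mathbb{Z})\times SL_2(\mathbb{Z}/b\mathbb{Z})$ combined with Lemma \ref{point1} gives the multiplicativity of the squared trace moduli, hence of the averaged sums. Your write-up merely makes explicit the details the paper leaves implicit (trace of a tensor product, modulus-one scalar ambiguity of the unitary projective representatives, and the Fubini step over the product group).
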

\begin{proof}
This follows from the fact that we have a group isomorphism $SL_2(\mathbb{Z}/ab\mathbb{Z}) \cong SL_2(\mathbb{Z}/a\mathbb{Z}) \times SL_2(\mathbb{Z}/b\mathbb{Z})$  together with Proposition $\ref{point1}$.
\end{proof}
\vspace{3mm}
\par In \cite{Kl} Kloosterman showed that for an odd prime $r$ and $n\geq 1$ then 
$ \textbf{S}_{r^n}=n+1$.
 Thus, to complete the proof of Theorem $\ref{main_theorem}$ it remains to show the following:

\begin{proposition}\label{prop_sums}
For $n\geq 1$, we have:
$$\textbf{S}_{2^n}=n-1$$
\end{proposition}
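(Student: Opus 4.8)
The plan is to evaluate the character sum $\textbf{S}_{2^n}$ explicitly, following the strategy Kloosterman used in \cite{Kl} for the odd prime power case. Recall that $\textbf{S}_{2^n}$ is the average of $|\tr\pi_{2^{n-1},1}(\phi)|^2$ over $\phi\in SL_2(\mathbb{Z}/2^n\mathbb{Z})$, the group through which the genus one level $2^{n-1}$ \weil representation factors; the quantity is well defined because the matrices of Proposition \ref{matrix} are unitary, so the scalar ambiguity of the projective representation is a root of unity and disappears from $|\tr|^2$. By that same unitarity $\overline{\tr\pi(\phi)}=\tr\pi(\phi^{-1})$, so $\textbf{S}_{2^n}=\langle\chi,\chi\rangle$ is the dimension of the commutant of the image. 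In particular, iterating Lemmas \ref{point3} and \ref{point2} in Section 3 already exhibits $U_{2^{n-1}}^{\otimes 1}$ as a direct sum of exactly $n-1$ invariant submodules (peeling off $W_{2^{n-1}},W_{2^{n-3}},\dots$ down to $U_4$ or $U_2$, then splitting each $W_{2^k}$ and $U_4$ into its even and odd parts), so Serre's criterion (\cite{Serre}, Ch.~2) gives $\textbf{S}_{2^n}\geq n-1$ for free. Hence the real task is the reverse inequality, and it will come out of the exact evaluation.

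First I would establish a closed trace formula for $\tr\pi_{2^{n-1},1}\!\begin{pmatrix}a&b\\c&d\end{pmatrix}$. Writing such an element in Bruhat normal form --- either it is upper triangular, in which case $\pi(\phi)$ is monomial and its trace is read off immediately, or it lies in the big cell and factors as a product of the explicit generators $\begin{pmatrix}1&\ast\\0&1\end{pmatrix}$, a dilation $\begin{pmatrix}c&0\\0&c^{-1}\end{pmatrix}$, and $\begin{pmatrix}0&-1\\1&0\end{pmatrix}$ described in Proposition \ref{matrix} and \eqref{genonematrix}--\eqref{genonematrix2} --- one obtains that $\tr\pi(\phi)$ equals, up to an explicit unit, a one-variable quadratic Gauss sum modulo $2^n$ whose coefficients depend only on $c$ and on $a+d$. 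Substituting this into the definition of $\textbf{S}_{2^n}$ and grouping the elements of $SL_2(\mathbb{Z}/2^n\mathbb{Z})$ according to the $2$-adic valuation of $c$ (and the residue of $a+d$) reduces $\textbf{S}_{2^n}$ to a finite combination of squared moduli of quadratic Gauss sums modulo $2^n$, which are then computed with the classical evaluations recalled in \cite{BE}.

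The main obstacle is exactly this last step: quadratic Gauss sums modulo powers of $2$ are far more delicate than in odd characteristic --- they acquire spurious factors of $2$, their values depend on residues modulo $8$, and they vanish identically for one parity of the valuation of the leading coefficient --- and the even level normalization contributes the extra factor $\tfrac12$ in $G(1,0,2N)$ (as well as the $24$th root $\beta$ of Section \ref{linea_section}, which however drops out of $|\tr|^2$). One has to keep track of all these contributions stratum by stratum; each admissible valuation of $c$ contributes exactly $1$, and the total comes out to $n-1$ rather than the value $n+1=\sigma(2^n)$ that the naive count would give (in the odd case $\textbf{S}_{r^n}$ is the number of $SL_2$-orbits on $(\mathbb{Z}/r^n\mathbb{Z})^2$), the discrepancy of $2$ being caused precisely by the strata annihilated by the vanishing of the relevant $2$-adic Gauss sums. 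Finally the small cases $n\leq 3$, where $2^{n-1}<3$ or the iterated splitting of Section 3 degenerates, are checked directly from the matrices of Proposition \ref{matrix} and \eqref{genonematrix}--\eqref{genonematrix2}.
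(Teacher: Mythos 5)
Your overall framing is reasonable: $\textbf{S}_{2^n}$ is evaluated by summing $|\tr\pi_{2^{n-1},1}(\phi)|^2$ over $SL_2(\mathbb{Z}/2^n\mathbb{Z})$, the traces do reduce to quadratic Gauss sums modulo $2^n$ via factorizations into the generators of Proposition \ref{matrix}, and your remark that the exhibited decomposition already gives $\textbf{S}_{2^n}\geq n-1$ is correct. But there is a genuine gap at the core of the plan: the claim that $\tr\pi_{2^{n-1},1}(\phi)$ depends only on $c$ and $a+d$, so that one may stratify the group by the $2$-adic valuation of $c$ and the residue of $a+d$, is false. The trace is a class function, and the conjugacy class of $\phi$ in $SL_2(\mathbb{Z}/2^n\mathbb{Z})$ is governed by the invariants $(x,l,\tau)$ of Section 4.1.1, where $l$ is the largest integer with $\phi\equiv x\mathds{1}\pmod{2^l}$; this $l$ involves $b$ and the individual entries $a,d$, not merely $c$ and $a+d$. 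Concretely, for $n$ large the matrices $\left(\begin{smallmatrix}1&2\\8&17\end{smallmatrix}\right)$ and $\left(\begin{smallmatrix}5&8\\8&13\end{smallmatrix}\right)$ both have $c=8$ and $a+d=18$, yet the first has $(l,x)=(1,1)$ and the second $(l,x)=(3,5)$, and Proposition \ref{even_traces} gives squared traces $2^{3}$ and $2^{4}$ respectively. So your strata are not level sets of $|\tr|^2$, and the bookkeeping ``each admissible valuation of $c$ contributes exactly $1$'' cannot be carried out as stated: in the actual computation the single stratum $l=0$ already contributes $\tfrac13+\tfrac{n-1}{2}$ to the average. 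Note also that the dichotomy ``upper triangular or big cell'' does not exhaust $SL_2(\mathbb{Z}/2^n\mathbb{Z})$: for $c$ even and nonzero the factorization through the dilation $\left(\begin{smallmatrix}c&0\\0&c^{-1}\end{smallmatrix}\right)$ is unavailable, and one must use a decomposition adapted to whichever of $a$ or $c$ is odd, as is done in the proof of Proposition \ref{even_traces}.

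What the proposal elides is therefore the hard part of the argument: a complete classification of the conjugacy classes of $SL_2(\mathbb{Z}/2^n\mathbb{Z})$ by the invariants $(x,l,\tau)$, together with the cardinality of each class (Proposition \ref{prop_conj} and Corollary \ref{corro_class}), which rests on Hensel's lemma and on counting solutions of binary quadratic congruences modulo $2^n$ (Lemmas \ref{lemma_imp}, \ref{lemme5}, \ref{lemme4} and \ref{lemme2}). You have correctly identified one source of difficulty — the delicate behaviour and vanishing of $2$-adic Gauss sums, which is indeed what reduces the naive count $\sigma(2^n)=n+1$ to $n-1$ — but the larger one, namely finding the correct stratification and counting each stratum, is missing, and without it the ``exact evaluation'' you defer to cannot be completed.
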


 Since the summand $|\tr (\pi_{2^n}(\phi))|^2$ only depends on the conjugacy class of $\phi$ we will first make a complete study of the conjugacy classes of $\Sppp$.  Then we will compute the characters of the \weil representations on representatives of each conjugacy classes.

\vspace{4mm}
\subsubsection{Conjugacy classes of $\Sppp$}

\vspace{2mm}
\par 
We begin by defining three invariants of the conjugacy classes which almost classify the conjugacy classes: 

\vspace{2mm}
\par \begin{definition}
For $A\in \Sppp$ there exists a unique integer $l\in\{0,\ldots, n\}$ and $x\in \{ 0,\ldots,2^l-1\}$ such that:
\begin{equation*}
A \equiv  x \mathds{1} +2^l U_1 \; \pmod{2^n}
\end{equation*} 
for some matrix $U_1$ which reduction modulo $2$ is neither the identity, nor the null matrix. We define a third integer 
$$\tau:=\left\{
\begin{array}{ll}
\tr(A)\in\mathbb{Z}/2^n\mathbb{Z}, & \mbox{ when }l=0.
\\ \det (U_1)\in\mathbb{Z}/2^{n-l}\mathbb{Z}, & \mbox{ when }l\geq 1.
\end{array} \right.
$$ 
Note that $\det(U)=1 (\mbox{mod }2^n)$ implies that $x^2=1 (\mbox{mod }2^l)$ hence if $l=1$ then $x=1$, when $l=2$ then $x= 1$ or $3$, when $l\geq 3$ we have four choices: $x=1,2^l-1, 2^{l-1}+1$ or $2^{l-1}-1$.
\par Let us denote by $C(x,l,\tau)$ the set of matrices of $\Sppp$ having $x,l$ and $\tau$ as invariants. Clearly $C(-1,l,\tau)=-C(1,l,\tau)$ and $C(2^{l-1}-1,l,\tau) =-C(2^{l-1}+1,l,\tau)$, thus we only need to study the conjugacy classes of $C(x,l,\tau)$ when $x=1$ or $x=2^{l-1}+1$.
\end{definition}
\par As example, the matrices with $l=0$ are the matrices which are not equal to the identity matrix modulo $2$ whereas those with $l=n$ are the four scalar matrices.

\vspace{3mm}
\par \begin{definition} We define the following representatives of $C(x,l,\tau)$, where $c_1$ will denote an odd number: 
\begin{itemize}
\item $l=0$, $A_0(\tau,c_1):= \begin{pmatrix} 1& c_1^{-1}(\tau - 2) \\ c_1 & \tau -1 \end{pmatrix}$.
\item $l\geq 1$, $x=1$, $A_l(\tau, c_1) := \begin{pmatrix} 1 & c_1^{-1}2^l \tau \\ c_1 2^l & 1+2^l \tau \end{pmatrix}$.
\item $l\geq 3$, $x=1+2^{l-1}$, $B_l(\tau,c_1) := \begin{pmatrix} 1+2^{l-1} & -c_1^{-1}2^l\tau \\ 2^l c_1 & 1+2^{l-1}-(1+2^{l-1})^{-1}(2^l+2^{2l-2}+2^{2l}\tau) \end{pmatrix}$.
\end{itemize}
Similar representative for $x=-1$ and $x=2^{l-1}-1$ are given by taking  $-A_l$ and $-B_l$.
\end{definition}
\vspace{2mm}
\begin{proposition}\label{prop_conj}
\par Each set $C(x,l,\tau)$ contains $1,2$ or $4$ conjugacy classes each containing a matrix $\pm A_l(\tau,c_1)$ or $\pm B_l(\tau,c_1)$ for a suitable choice of $c_1$. The following table gives for every $l,x,\tau$ a set of $1,2$ or $4$ representatives and the cardinal $m(A)$ of the corresponding conjugacy classes: 
\vspace{2mm} \par \begin{center}
\begin{tabular}{|m{2cm}||c|c|l|}
\hline
$l$ and $x$ & $\tau$ & Representatives  of $C(x,l,\tau)$ & $m(A)$ \\ \hline
$l=0$ & $\tr(U)=\tau$ is odd &  $A_0(\tau,1)$ & $2^{2n-1}$ \\ 
\cline{2-4}
 & $\tr(U)=\tau=2 \pmod{4}$ & $A_0(\tau,1),A_0(\tau,3),A_0(\tau,5),A_0(\tau,7)$ & $3\cdot 2^{2n-4}$ \\
\cline{2-4}
 & $\tr(U)=\tau=0  \pmod{4}$ & $A_0(\tau,1),A_0(\tau,3)$ & $3\cdot 2^{2n-3}$ \\
\hline
$l=1$ \newline and $x=1$ & $\tau=1 \pmod{8}$ & $A_1(\tau,1),A_1(\tau,3),A_1(\tau,5),A_1(\tau,7)$ & $3\cdot 2^{2n-6}$ \\
\cline{2-4}
 & $\tau=3,5,7  \pmod{8} $ & $A_1(\tau,1),A_1(\tau,\tau)$ & $3\cdot 2^{2n-5}$ \\
\cline{2-4}
& $\tau=2,4,6  \pmod{8} $ & $A_1(\tau,1),A_1(\tau,3)$ & $3\cdot 2^{2n-5}$ \\
\cline{2-4}
 & $\tau=0  \pmod{8} $ & $A_1(\tau,1),A_1(\tau,3),A_1(\tau,5),A_1(\tau,7)$ & $3\cdot 2^{2n-6}$ \\
\hline
$2\leq l \leq n-3$ and $x=1$ & $\tau=1,4,5  \pmod{8}$ & $A_l(\tau,1),A_l(\tau,3)$ & $3\cdot 2^{2n-2l-3}$ \\
\cline{2-4}
  & $\tau=3,7  \pmod{8}$ & $A_l(\tau,1)$ & $3\cdot 2^{2n-2l-2}$ \\
\cline{2-4}
 & $\tau=2  \pmod{8}$ & $A_l(\tau,1),A_l(\tau,5)$ & $3\cdot 2^{2n-2l-3}$ \\
\cline{2-4}
 & $\tau=0  \pmod{8}$ & $A_l(\tau,1),A_l(\tau,3),A_l(\tau,5),A_l(\tau,7)$ & $3\cdot 2^{2n-2l-4}$ \\
\hline
$l=n-2$\newline and $x=1$ & $\tau=0,1 \pmod{4}$ & $A_{n-2}(\tau,1), A_{n-2}(\tau,3)$ & $6$\\
\cline{2-4}
 & $\tau=2,3  \pmod{4}$ & $A_{n-2}(\tau,1)$ & $12$ \\
\hline
$l=n-1$ \newline and $x=1$ & $\tau=0  \pmod{2}$ & $A_{n-1}(0,1)$ & $3$ \\
\cline{2-4}
 & $\tau=1  \pmod{2}$ & $A_{n-1}(1,1) $ & $3$ \\
\hline
$3\leq l \leq n-1$\newline and \newline $x=1+2^{l-1}$ & $\tau$ odd & $B_l(\tau,1)$ & $2^{2n-2l-1}$ \\
\cline{2-4}
 & $\tau$ even & $B_l(\tau,1)$ & $3\cdot 2^{2n-2l-1}$ \\
\hline
$l=n$ & & $\mathds{1}, -\mathds{1}, (2^{n-1}+1)\mathds{1}$ and $(2^{n-1}-1)\mathds{1}$ & $1$\\
\hline
\end{tabular}
\end{center}
\end{proposition}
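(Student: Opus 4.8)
The plan is to show that the three quantities $(x,l,\tau)$ are conjugacy invariants, to determine how each fibre $C(x,l,\tau)$ splits into conjugacy classes, and then to compute the cardinalities $m(A)$ by orbit--stabilizer.

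\emph{Invariants and the determinant equation.} For $A\in\Sppp$ the integer $l$ is the largest $m\le n$ for which $A$ is scalar modulo $2^m$, and $x\in\{0,\dots,2^l-1\}$ that scalar; both are visibly unchanged under conjugation. Writing $A=x\mathds{1}+2^lU_1$, conjugation replaces $U_1$ by $PU_1P^{-1}$, so $\tr U_1$ and $\det U_1$ --- which are only meaningful modulo $2^{n-l}$, as is $U_1$ --- are invariant, hence so is $\tau$ ($=\tr A$ when $l=0$, where $U_1=A$ makes $\det U_1=1$ uninformative, and $=\det U_1$ when $l\ge1$). Expanding $1=\det A=x^2+x\,2^l\,\tr U_1+2^{2l}\det U_1$ modulo $2^n$ forces $x^2\equiv1\pmod{2^l}$, whose solutions in the range are exactly those listed ($x=0$ for $l=0$, $x=1$ for $l=1$, $x\in\{1,3\}$ for $l=2$, $x\in\{1,\,2^{l-1}-1,\,2^{l-1}+1,\,2^l-1\}$ for $l\ge3$), and it also pins down $\tr U_1$ modulo $2^{n-l}$ in terms of $x,l,\tau$. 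Thus $C(x,l,\tau)$ is precisely the set of $A$ with the prescribed scalar part and with $U_1$ of prescribed trace and determinant. Since $-\mathds{1}$ is central, $C(-x,l,\tau)=-C(x,l,\tau)$, and $2^{l-1}-1\equiv-(2^{l-1}+1)\pmod{2^l}$, so it suffices to treat $x=1$ and (for $l\ge3$) $x=1+2^{l-1}$.

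\emph{Splitting a fibre.} Because $\overline{U_1}$ (mod $2$) is non-scalar it has a cyclic vector; lifting it gives a basis of $(\znnn)^2$ in which $U_1$ becomes the companion matrix of its characteristic polynomial $T^2-(\tr U_1)T+\det U_1$, so all matrices in $C(x,l,\tau)$ are $\GL_2(\znnn)$-conjugate. This single $\GL_2$-orbit breaks into $[(\mathbb{Z}/2^{n-l}\mathbb{Z})^\times:N]$ orbits under $\Sppp$, where $N=\det Z_{\GL_2(\mathbb{Z}/2^{n-l}\mathbb{Z})}(\overline{U_1})$ is the norm group of $(\mathbb{Z}/2^{n-l}\mathbb{Z})[T]/(T^2-(\tr U_1)T+\det U_1)$; this index equals $1$, $2$ or $4$ according to the $2$-adic valuation of the discriminant $(\tr U_1)^2-4\det U_1$, which accounts for the $1$, $2$ or $4$ classes of the table, represented by $A_l(\tau,c_1)$, resp. $B_l(\tau,c_1)$, as $c_1$ runs over representatives of the relevant square classes.

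\emph{Class sizes.} Orbit--stabilizer gives $m(A)=|\Sppp|/|Z_{\Sppp}(A)|$ with $|\Sppp|=3\cdot2^{3n-2}$, and since the kernel of $\Sppp\to SL_2(\mathbb{Z}/2^{n-l}\mathbb{Z})$ is contained in $Z_{\Sppp}(A)$ one gets $m(A)=|SL_2(\mathbb{Z}/2^{n-l}\mathbb{Z})|/|Z_{SL_2(\mathbb{Z}/2^{n-l}\mathbb{Z})}(\overline{U_1})|$. As $\overline{U_1}$ is non-scalar, its centralizer in $M_2(\mathbb{Z}/2^{n-l}\mathbb{Z})$ is the free rank-two algebra $\{a\mathds{1}+b\overline{U_1}\}$, so $|Z_{SL_2(\mathbb{Z}/2^{n-l}\mathbb{Z})}(\overline{U_1})|$ is the number of $(a,b)\in(\mathbb{Z}/2^{n-l}\mathbb{Z})^2$ with $a^2+ab\,\tr U_1+b^2\det U_1=1$ --- a conic count again controlled by the valuation of $(\tr U_1)^2-4\det U_1$. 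Running this through the cases produces the column $m(A)$.

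\emph{Main obstacle.} The real work, and where slips are easy, is the case analysis behind the last two steps: tracking correctly the square classes and norm groups of the algebras $(\mathbb{Z}/2^{n-l}\mathbb{Z})[T]/(T^2-tT+d)$ and the associated conic counts across every range of $l$ --- in particular the small cases $l=0,1$, where $\overline{U_1}$ non-scalar does not yet provide a convenient normal form and the $2$-torsion of $(\znnn)^\times$ is most delicate, and the boundary cases $l\in\{n-2,n-1,n\}$, where $U_1$ lives modulo a very small power of $2$ and the generic discussion degenerates. One must also check directly that the explicit matrices $A_l(\tau,c_1),B_l(\tau,c_1)$ have determinant $1$ and invariants $(x,l,\tau)$, and that the listed values of $c_1$ (and of $\tau$) exhaust each fibre without redundancy; this is routine but constitutes the bulk of the argument.
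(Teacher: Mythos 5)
Your strategy is sound and is, at bottom, the same reduction the paper makes, phrased in more invariant language. The paper's key step (Lemma \ref{lemma_imp}) shows that two elements of $C(x,l,\tau)$ are conjugate iff a binary quadratic congruence $E_{U,U'}$ modulo $2^{n-l}$ is solvable, and that $m(U)=\tfrac{1}{k}\,3\cdot 2^{3n-3l-2}$ where $k$ counts the automorphs (solutions of $E_{U,U}$); your passage to $SL_2(\mathbb{Z}/2^{n-l}\mathbb{Z})$-conjugacy of $U_1$, to the index of the norm group of the quadratic algebra $(\mathbb{Z}/2^{n-l}\mathbb{Z})[U_1]$ in $(\mathbb{Z}/2^{n-l}\mathbb{Z})^\times$, and to the count of norm-one elements of that algebra is exactly the same pair of questions. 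The surjectivity of $\Sppp\to SL_2(\mathbb{Z}/2^{n-l}\mathbb{Z})$ with kernel of order $2^{3l}$, which you use implicitly to transfer centralizers, is the paper's Lemma \ref{lemme5}. The companion-matrix normal form is a genuine simplification over the paper's weaker normalization ``$c_1$ odd,'' and your observation that the characteristic polynomial of $U_1$ is pinned down by $(x,l,\tau)$ via the determinant expansion is the right way to see that each fibre is a single $\GL_2$-orbit.

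The gap is that you stop precisely where the content of the proposition begins. The table \emph{is} the list of indices $[(\mathbb{Z}/2^{n-l}\mathbb{Z})^\times:N]$ and of the counts $\#\{(a,b):a^2+ab\,t+b^2d=1\}$ in every regime of $(x,l,\tau)$, and you compute none of them; ``running this through the cases produces the column $m(A)$'' is an assertion, not an argument. These counts are not a function of the valuation of the discriminant alone: one must separate disc $\equiv 1,5$ from $3,7$ mod $8$ in the odd case, the strata $\Delta\equiv 0$ versus $2,4,6$ mod $8$ in the even case, and handle the degenerate boundary ranges $l\in\{n-2,n-1,n\}$ where $2^{n-l}$ is too small for the generic answer. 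The paper devotes Lemmas \ref{lemme4} and \ref{lemme2} (Hensel's lemma, completion of the square to $z^2+\Delta y^2\equiv AD$, and the structure of squares mod $2^n$) to exactly this, and without an equivalent of those lemmas neither the number of classes per fibre nor the column $m(A)$ is established. A smaller but real point: the verification you defer as ``routine'' --- that the listed matrices have determinant $1$ and realize the invariants $(x,l,\tau)$ --- is not vacuous; as printed, $\det A_l(\tau,c_1)=1+2^l\tau-2^{2l}\tau$, which is $1$ modulo $2^n$ only when $2^{n-l}$ divides $\tau$, so the representatives must be corrected before they can be matched against the fibres, and that check cannot be skipped.
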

\vspace{4mm}
\par  Proposition $\ref{prop_conj}$ gives the complete description of the conjugacy classes of $\Sppp$.  The exact information needed for computing $\textbf{S}_{2^n}$ is summarized in the following: 
\vspace{1mm}
\begin{corollary}\label{corro_class}
For $A\in \Sppp$ we define $s(A)\in \{2l,\ldots, l+n\}$ to be the maximal $s$ for which $2^{s-l}$ divides $\tau$. Let $N(l,x)$, resp $N(l,x,s)$, be the number of matrices having $l,x$ (resp $s$) as invariants. We deduce from Theorem \ref{prop_conj} the following:
\begin{enumerate}
\item $N(0,1,0)=2^{3n-2}$.
\item For $1\leq s \leq n-1$, $N(0,1,s) =3\cdot 2^{3n-s-3}$.
\item $N(0,1,n)=3\cdot 2^{2n-2}$.
\item For $l\geq 1$,  $N(l,1,s)=3.2^{3n-l-s-3}$ if $s\neq l+n$ and $N(l,1, n+l)=3\cdot 2^{2n-2l-2}$. 
\item For $l\geq 2$, $N(l,-1)=3\cdot 2^{3n-3l-2}$.
\item For $l\geq 3$, $N(l,1+2^{l-1})=N(l,2^{l-1}-1)=2^{3n-3l}$.
\item $N(n,x)=1$.
\end{enumerate}
\end{corollary}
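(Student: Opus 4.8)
The plan is to deduce Corollary~\ref{corro_class} from Proposition~\ref{prop_conj} by pure counting. Since $N(l,x)$, $N(l,x,s)$ and $N(l,\pm 1)$ each count the matrices of $\Sppp$ lying in a prescribed union of conjugacy classes, every one of these numbers is the sum of the cardinals $m(A)$ of the classes involved, and Proposition~\ref{prop_conj} provides exactly that data: it lists the classes (indexed by the parameter $\tau$, together with the finite choice of the odd number $c_1$) and their cardinals. So the entire argument is a matter of grouping the rows of the table of Proposition~\ref{prop_conj} according to the invariants occurring in the Corollary and summing.

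First I would fix a pair $(l,x)$ and read off the conjugacy classes with those invariants: they are parametrized by $\tau$ running over $\mathbb{Z}/2^{n}\mathbb{Z}$ when $l=0$ and over $\mathbb{Z}/2^{n-l}\mathbb{Z}$ when $l\ge 1$, with $1$, $2$ or $4$ of them for each $\tau$, all of a common cardinal $m(A)$ that depends only on the residue of $\tau$ modulo $8$ (modulo $4$ or $2$ when $l$ is close to $n$). The first fact to record is that in every row the product (number of classes for that $\tau$)~$\times\ m(A)$ is \emph{independent} of the residue of $\tau$: for instance for $2\le l\le n-3$, $x=1$ one checks $2\cdot 3\cdot 2^{2n-2l-3}=1\cdot 3\cdot 2^{2n-2l-2}=4\cdot 3\cdot 2^{2n-2l-4}=3\cdot 2^{2n-2l-2}$, and similarly for the other blocks. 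Hence the number $q(l,x)$ of matrices with invariants $(l,x)$ and a \emph{fixed} value of $\tau$ is a single quantity, so $N(l,x)=q(l,x)\cdot\#\{\tau\}$. This immediately yields $N(n,x)=1$, $N(l,-1)=N(l,1)=3\cdot 2^{3n-3l-2}$ (using that $C(-1,l,\tau)=-C(1,l,\tau)$ is a cardinality-preserving bijection of conjugacy classes), and $N(l,1+2^{l-1})=N(l,2^{l-1}-1)=2^{3n-3l}$.

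For the refined counts $N(l,x,s)$ I would then translate the condition $s(A)=s$ into a condition on the $2$-adic valuation of $\tau$, the boundary value of $s$ corresponding to $\tau=0$, and apply the elementary count $\#\{\tau\in\mathbb{Z}/2^{N}\mathbb{Z}\,:\,v_2(\tau)=j\}=2^{N-j-1}$ for $j<N$ and $=1$ for $j=N$. Multiplying this by $q(l,x)$ and simplifying exponents gives $N(0,1,0)=2^{3n-2}$, $N(0,1,s)=3\cdot 2^{3n-s-3}$ for $1\le s\le n-1$, $N(0,1,n)=3\cdot 2^{2n-2}$, and for $l\ge 1$ the generic formula $N(l,1,s)=3\cdot 2^{3n-l-s-3}$ with the boundary value $N(l,1,l+n)=3\cdot 2^{2n-2l-2}$; summing $N(l,1,s)$ over $s$ recovers $3\cdot 2^{3n-3l-2}=N(l,1)$, which serves as a consistency check.

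The main obstacle is bookkeeping rather than anything conceptual: one must treat the exceptional rows of Proposition~\ref{prop_conj} separately — the case $l=0$ with its three regimes ($\tau$ odd, $\tau\equiv 2$, $\tau\equiv 0\pmod 4$), the case $l=1$, and the boundary values $l=n-2,\ n-1,\ n$ — verifying in each the collapse ``(number of classes)~$\times\ m(A)=q(l,x)$'', and one must keep careful track of the small‑$N$ correction in the valuation count when $n-l$ is small, so that the unique class $\tau=0$ is counted exactly once and produces the boundary value of $s$ instead of being absorbed into the generic formula. None of these computations is deep, but there are enough of them that the only genuine risk is an arithmetic slip in the exponents.
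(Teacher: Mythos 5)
Your proposal is correct and is exactly the argument the paper intends: the corollary carries no separate proof in the text and is obtained precisely by grouping the rows of the table in Proposition \ref{prop_conj} by the invariants $(l,x,s)$, translating $s$ into the $2$-adic valuation of $\tau$ (note the paper's stated range $\{2l,\ldots,l+n\}$ forces the reading $s=2l+v_2(\tau)$, so the ``$2^{s-l}$'' in the definition is a typo for $2^{s-2l}$), and summing (number of classes)$\times m(A)$ over the admissible $\tau$. One small caveat: your blanket claim that the per-$\tau$ count is independent of the residue of $\tau$ fails for $l=0$ and for $x=2^{l-1}\pm1$ (odd versus even $\tau$ give different totals there), but since the corollary's entries separate those regimes anyway, this does not affect any of the final numbers.
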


\vspace{4mm}
\par The proof of Proposition $\ref{prop_conj}$ will be deduced from the following:
\vspace{2mm}
\begin{lemma}\label{lemma_imp}
Let $U=\begin{pmatrix} a&b\\c&d \end{pmatrix}$ and $U'=\begin{pmatrix} a'&b'\\c'&d' \end{pmatrix}$ be two matrices of $C(x,l,\tau)$. If  $l=0$, we suppose that $c$ and $c'$ are odd. If $l\geq 1$, writing $U=x \mathds{1} +\begin{pmatrix} a_1&b_1\\c_1&d_1 \end{pmatrix}$ we suppose that $c_1$ and $c_1'$ are odd. Note that each conjugacy class contains an element satisfying these conditions. We define $E_{U,U'}$ the following equation: 
\begin{eqnarray*}
c_1 x^2+(a_1-d_1)xy-b_1y^2 \equiv  c_1' &  \pmod{2^{n-l}}, & \rm{~ when~ } l\geq 1; 
\\ c x^2+(a-d)xy-by^2 \equiv  c' &  \pmod{2^{n}}, & \rm{~ when~ } l=0.
\end{eqnarray*}
Then we have the two following properties: 
\begin{enumerate}
\item  The matrix $U$ is conjugate to $U'$ if and only if $E_{U,U'}$ has solutions.
\item  If $k$ is the number of solutions of $E_{U,U}$ then the conjugacy class of $U$ has $m(U)=\frac{1}{k}3\cdot 2^{3n-3l-2}$ elements.
\end{enumerate}
\end{lemma}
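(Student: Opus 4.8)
The plan is to translate the conjugacy problem into an equation about the orbit of the "lower-left coefficient" under the natural action of $\Spp$. First I would reduce to the case $l\geq 1$; the case $l=0$ is handled the same way with $U$ in place of $U_1 = \frac{1}{2^l}(U-x\mathds{1})$ and modulus $2^n$ in place of $2^{n-l}$. So write $U = x\mathds{1}+2^l U_1$, $U' = x\mathds{1}+2^l U_1'$ with $U_1 = \left(\begin{smallmatrix} a_1&b_1\\ c_1&d_1\end{smallmatrix}\right)$ etc., both having odd lower-left entry. Since $\det U = 1 \pmod{2^n}$ gives $x^2\det(\text{reduction}) \equiv 1$ and pins down $x^2 \bmod 2^l$, conjugating $U$ by $g\in\Spp$ is the same as conjugating $U_1$ by $g$, now viewed modulo $2^{n-l}$: $gUg^{-1} = x\mathds{1}+2^l\, gU_1g^{-1}$, and $gU_1g^{-1}$ only matters mod $2^{n-l}$. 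Thus $U\sim U'$ iff $U_1$ and $U_1'$ are conjugate in $SL_2(\mathbb{Z}/2^{n-l}\mathbb{Z})$ (one must check that the conjugating element can be lifted back to $\Spp$, which is automatic by surjectivity of $\Spp\to SL_2(\mathbb{Z}/2^{n-l}\mathbb{Z})$).

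Next I would observe that $U_1$ and $U_1'$ share the same trace $a_1+d_1$ and the same determinant $\tau$ modulo $2^{n-l}$ — these are exactly the class invariants — so the pair $(U_1,U_1')$ is conjugate in $M_2$ over the ring $\mathbb{Z}/2^{n-l}\mathbb{Z}$ in the weak sense that they have the same characteristic polynomial; the point is to upgrade this to conjugacy by an element of determinant $1$. The key computation: for $g = \left(\begin{smallmatrix} \alpha & \beta\\ \gamma & \delta\end{smallmatrix}\right)\in SL_2$, the lower-left entry of $gU_1g^{-1}$ is
\begin{equation*}
c_1\alpha^2 + (a_1-d_1)\alpha\gamma - b_1\gamma^2
\end{equation*}
(a direct $2\times 2$ expansion using $\alpha\delta-\beta\gamma=1$). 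Since $c_1$ is odd, this quadratic form in $(\alpha,\gamma)$ represents odd values only when — in fact always, since $c_1\alpha^2+\dots$ is odd precisely when $c_1\alpha^2$ is, i.e. when $\alpha$ is odd — and for $\alpha$ odd one can complete $(\alpha,\gamma)$ to a matrix of $SL_2$. Conversely, I would argue that once the lower-left entries agree, the common trace and determinant force the full matrices to be conjugate by a unipotent (upper-triangular) element of $SL_2$: subtracting, $U_1$ and its partial conjugate differ by a matrix with zero lower-left entry, zero trace, zero determinant, hence strictly upper triangular and nilpotent, and such a difference can be absorbed by conjugation by $\left(\begin{smallmatrix}1&*\\0&1\end{smallmatrix}\right)$ — this uses again that $c_1$ is invertible mod $2$ to solve for the needed parameter. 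Chaining these two moves shows $U_1\sim U_1'$ iff $c_1\alpha^2+(a_1-d_1)\alpha\gamma-b_1\gamma^2 \equiv c_1' \pmod{2^{n-l}}$ has a solution $(\alpha,\gamma)$ with $\alpha$ odd, and one checks that the "$\alpha$ odd" constraint is automatic for any solution since $c_1'$ is odd. This is exactly $E_{U,U'}$, proving (1).

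For (2), I would apply the orbit–stabilizer theorem inside $\Spp$: $m(U) = |\Spp| / |Z(U)|$ where $Z(U)$ is the centralizer. From part (1)'s analysis, the map $g\mapsto (\text{lower-left entry of } gUg^{-1})$ has image of size equal to the number of distinct values $c_1'$ for which $E_{U,U'}$ is solvable, and its fibers over $c_1$ itself (which determine the centralizer up to the unipotent freedom identified above) have a controlled size; combining $|\Spp| = |SL_2(\mathbb{Z}/2^n\mathbb{Z})| = 3\cdot 2^{3n-2}$ (or the appropriate formula, with the $2^l$-scaling contributing the $2^{3l}$ in the denominator via reduction to $SL_2(\mathbb{Z}/2^{n-l}\mathbb{Z})$) with the count $k$ of solutions of $E_{U,U}$ gives $m(U) = \tfrac{1}{k}\,3\cdot 2^{3n-3l-2}$. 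The main obstacle I anticipate is the bookkeeping in (2): correctly relating $k$ (solutions of the \emph{self}-equation $E_{U,U}$, which measures the "extra" symmetry) to the centralizer order, and making sure the factor $2^{3l}$ coming from the passage $\Spp \to SL_2(\mathbb{Z}/2^{n-l}\mathbb{Z})$ and from the stabilizer of the scalar part $x\mathds{1}+2^l(\cdot)$ is tracked without error; the conjugacy equivalence in (1) is comparatively routine once the quadratic-form identity is in hand.
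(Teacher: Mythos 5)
Your plan is essentially the paper's proof: the identity expressing the lower-left entry of $XUX^{-1}$ as a binary quadratic form in two entries of $X$, the use of the invertibility of $c$ (resp.\ $c_1$) to reconstruct the remaining entries of the conjugator, and orbit--stabilizer combined with a $2^{3l}$ lifting factor (the paper's Lemma \ref{lemme5}) for the count. Two steps as written are incorrect, though both are repairable within your framework. First, the quadratic form is a function of the \emph{bottom row} of the conjugator, not of its first column: for $X=\left(\begin{smallmatrix} x_1&y_1\\ x_2&y_2\end{smallmatrix}\right)$ the lower-left entry of $XU_1X^{-1}$ is $c_1y_2^2+(a_1-d_1)x_2y_2-b_1x_2^2$, independent of $x_1,y_1$; your parity discussion of ``$\alpha$ odd'' (which is not equivalent to the form being odd in general) should be replaced by the observation that any solution $(y_2,x_2)$ of $E_{U,U'}$ automatically has at least one odd entry, since otherwise the left-hand side would be even while $c_1'$ is odd, and hence extends to a row of an $SL_2$ matrix. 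Second, the claim that two matrices with equal trace, determinant and lower-left entry differ by a strictly upper triangular nilpotent matrix is false: the difference is only of the form $\left(\begin{smallmatrix} e&f\\ 0&-e\end{smallmatrix}\right)$, and equality of the two determinants says nothing about the determinant of the difference.

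The correct step --- which your parenthetical about ``solving for the needed parameter'' suggests you intend --- is that conjugation by $\left(\begin{smallmatrix}1&t\\0&1\end{smallmatrix}\right)$ replaces $a_1$ by $a_1+tc_1$ while fixing $c_1$ and the trace, so invertibility of $c_1$ lets you match the diagonal entries, after which equality of determinants forces the last entry. The paper instead writes out $XU=U'X$ as four linear equations and notes that the two bottom-row equations determine $x_1,y_1$ uniquely modulo $2^{n-l}$ from $(x_2,y_2)$. That uniqueness is exactly the point you leave open in part (2): it gives a bijection between solutions of $E_{U,U}$ and elements of $\Stab(U)$ reduced modulo $2^{n-l}$, hence $\lvert\Stab(U)\rvert=k\cdot 2^{3l}$ exactly, and the class formula then yields $m(U)=\frac{1}{k}\,3\cdot 2^{3n-3l-2}$.
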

\vspace{3mm}
\par Once this Lemma proved, the proof of  Theorem $\ref{prop_conj}$ will follows from the study of the equations $E_{U,U'}$. We will need the  Hensel's Lemma (see \cite{Che}, section $3.2$) which states that if $n\geq 1$, $x_0\in\znnn$ and $P\in\mathbb{Z}[x]$ is a polynomial such that $P(x_0) \equiv 0 \;  \pmod{2^n}$ and $P'(x_0)$ is odd, then there exists a unique element $\tilde{x_0}\in\mathbb{Z}/2^{n+1}\mathbb{Z}$ such that $\tilde{x_0} \equiv x_0  \pmod{2^n})$ and $P(\tilde{x_0}) \equiv 0  \pmod{2^{n+1}}$.

\vspace{2mm}
\begin{lemma}\label{lemme5}
\par Let $A\in \Sppp$, then there exists exactly $8$ matrices $\tilde{A}\in SL_2(\mathbb{Z}/2^{n+1}\mathbb{Z})$ such that $\tilde{A} \equiv A  \pmod{2^n}$.
\end{lemma}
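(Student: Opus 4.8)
The plan is to count directly the matrices of $SL_2(\mathbb{Z}/2^{n+1}\mathbb{Z})$ reducing to a given $A$ modulo $2^n$. Fix an integer representative $A=\begin{pmatrix} a&b\\c&d\end{pmatrix}$ with $ad-bc\equiv 1\pmod{2^n}$. Every lift of $A$ to $M_2(\mathbb{Z}/2^{n+1}\mathbb{Z})$ has the form $\tilde A=A+2^nE$ with $E=\begin{pmatrix}\alpha&\beta\\\gamma&\delta\end{pmatrix}$ and $\alpha,\beta,\gamma,\delta\in\{0,1\}$, so there are exactly $16$ lifts as plain matrices, and it remains to determine how many of them satisfy $\det\tilde A\equiv 1\pmod{2^{n+1}}$.

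First I would expand the determinant. Since $n\geq 1$ we have $2n\geq n+1$, hence $2^{2n}\equiv 0\pmod{2^{n+1}}$ and therefore
\[
\det\tilde A\equiv (ad-bc)+2^n\big(d\alpha - c\beta - b\gamma + a\delta\big)\pmod{2^{n+1}}.
\]
Writing $ad-bc=1+2^nm$ with $m\in\mathbb{Z}$, the condition $\det\tilde A\equiv 1\pmod{2^{n+1}}$ becomes the single congruence $m+d\alpha - c\beta - b\gamma + a\delta\equiv 0\pmod 2$, that is, an affine-linear equation $\bar d\,\alpha+\bar c\,\gamma+\bar b\,\beta+\bar a\,\delta=\bar m$ over $\mathbb{F}_2$ in the four unknowns $\alpha,\beta,\gamma,\delta$ (bars denoting reduction modulo $2$).

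Next I would observe that the linear form on the left-hand side is nontrivial: if $a,b,c,d$ were all even, then $ad-bc$ would be even, contradicting $ad-bc\equiv 1\pmod{2^n}$ with $n\geq 1$. Hence the coefficient vector $(\bar a,\bar b,\bar c,\bar d)\in\mathbb{F}_2^4$ is nonzero, so the affine equation has exactly $2^{4-1}=8$ solutions $(\alpha,\beta,\gamma,\delta)$, and distinct solutions obviously give distinct matrices $\tilde A$. This yields the desired count of $8$ lifts.

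There is essentially no serious obstacle here: the only points requiring care are that the $2^{2n}$-terms in the determinant genuinely vanish modulo $2^{n+1}$ (which is exactly where $n\geq 1$ is used) and that the $\mathbb{F}_2$-linear form is not identically zero. One may also read the statement structurally, as the smoothness of the group scheme $SL_2$ over $\mathbb{Z}_2$: the reduction $SL_2(\mathbb{Z}/2^{n+1}\mathbb{Z})\to SL_2(\mathbb{Z}/2^n\mathbb{Z})$ is surjective with kernel of order $2^{\dim SL_2}=2^3=8$. The explicit computation above is just the concrete incarnation of this fact, and it is the form we will use in the sequel; note that it also re-proves the surjectivity of the reduction map, which will be needed when we lift via Hensel's Lemma.
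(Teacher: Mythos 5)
Your proof is correct: the determinant expansion, the vanishing of the $2^{2n}$ terms for $n\ge 1$, and the nonvanishing of the coefficient vector $(\bar a,\bar b,\bar c,\bar d)$ are all exactly right (the swap of $\beta$ and $\gamma$ when you restate the congruence over $\mathbb{F}_2$ is a harmless transcription slip, since only the nonvanishing of the linear form matters). This is essentially the paper's argument in slightly different packaging: the paper fixes an odd entry, lifts the other three entries freely in $2^3=8$ ways and solves uniquely for the remaining one via Hensel's Lemma, whereas you count all $16$ lifts and halve by the single nontrivial affine condition modulo $2$ -- both rest on the same observation that the determinant imposes exactly one unit-coefficient linear constraint on the lift.
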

\vspace{2mm}
\begin{proof}
\par Let $A=\begin{pmatrix} a&b\\c&d\end{pmatrix}$. 
 Then  at least one entry of $A$ must be odd. Suppose $c$ is odd. There are exactly $8$ ways to lift $a,c$ and $d$ into elements $\tilde{a},\tilde{c},\tilde{d}$ in $\mathbb{Z}/2^{n+1}\mathbb{Z}$. Using Hensel's Lemma to the polynomial $P(b):=-\tilde{c}b+\tilde{a}\tilde{d}-1$ we show that for each of this $8$ choices, there is exactly one way to lift $b$  in $\mathbb{Z}/2^{n+1}\mathbb{Z}$ such that the corresponding matrix $\tilde{A}$ lies in $SL_2(\mathbb{Z}/2^{n+1}\mathbb{Z})$.
\end{proof}
\vspace{3mm}
\par Note that this lemma easily implies by induction that the cardinal of $\Sppp$ is $3\cdot 2^{3n-2}$.
\vspace{4mm}
\par \begin{proof}[Proof of Lemma \ref{lemma_imp}]
Suppose that $X=\begin{pmatrix} x_1&y_1\\x_2 & y_2\end{pmatrix}\in \Sppp$ is such that $XUX^{-1}=U'$. 
A simple computation shows that $XUX^{-1}$ has the form $XUX^{-1}=\begin{pmatrix} * & * \\ cy_2^2+(a-d)x_2y_2-bx_2^2 & *\end{pmatrix}$. Thus $(y_2,x_2)$ is solution of $E_{U,U'}$.
\vspace{2mm}
\par Conversely, let $(y_2,x_2)$ be solution of $E_{U,U'}$. The equality $XU=U'X$ is equivalent to the following equations:
 \begin{eqnarray}
x_1a +cy_1 &=& a'x_1+b'x_2 \label{un}
\\ x_1b+y_1d &=& a'y_1+b'y_2 \label{deux}
\\ x_2 a +cy_2 &=& c'x_1 +d'x_2 \label{trois}
\\ x_2 b +d y_2 &=& c' y_1 +d' y_2 \label{quatre}
\end{eqnarray} 
The equations $\eqref{trois}$ and $\eqref{quatre}$ completely determine the values of $x_1$ and $y_1$, so of $X$,  modulo $2^{n-l}$. Direct computations show that this $X$ is in $SL_2(\mathbb{Z}/2^{n-l}\mathbb{Z})$ and verifies $\eqref{un}$ and $\eqref{deux}$.


 Thus an element  $X$ in the stabilisator  $\Stab(U)$ of $U$ is completely determined modulo $2^{n-l}$ by a solution of $E_{U,U}$. Using Lemma $\ref{lemme5}$, we see that there are exactly $2^{3l}$ ways to lift such a matrix in  $\Sppp$. So, if $k$ is the number of solutions of $E_{U,U}$ then $|\Stab(U)|=k2^{3l}$. The class formula concludes the proof.

\end{proof}
\vspace{4mm}
\par It remains to compute the number of solutions of the equations $E_{U,U'}$. 
\begin{lemma}\label{lemme4}
Let $n\geq 1$ and $A,B,C,D$ four integers so that $ABD$ is odd. Let $E_n$ be the following equation: 
\begin{equation*}
Ax^2+Bxy+Cy^2 \equiv D \;   \pmod{2^n}
\end{equation*}
Then $E_n$ has $2^{n-1}$ solutions if $C$ is even and $3\cdot 2^{n-1}$ solutions if $C$ is odd.
\end{lemma}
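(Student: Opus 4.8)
I would prove the statement by induction on $n$, showing that the number $N_n$ of solutions of $E_n$ in $(\mathbb{Z}/2^n\mathbb{Z})^2$ doubles at each step, $N_{n+1}=2N_n$, and pinning down the base case $N_1$. For $n=1$, since $A,B,D$ are odd and $z^2\equiv z\pmod 2$, the equation $E_1$ reads $x(1+y)+Cy\equiv 1\pmod 2$: for $y\equiv 0$ it forces $x\equiv 1$ (one solution), while for $y\equiv 1$ it becomes $C\equiv 1\pmod 2$, which is unsatisfiable when $C$ is even and satisfied by both values of $x$ when $C$ is odd. Hence $N_1=1$ if $C$ is even and $N_1=3$ if $C$ is odd, which is the desired count for $n=1$.

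\textbf{The inductive step.} Assume the count holds for $E_n$ with $n\geq 1$. Every solution of $E_{n+1}$ reduces modulo $2^n$ to a solution of $E_n$, so it suffices to count lifts. Fix a solution $(x_0,y_0)$ of $E_n$ and write a lift as $(x_0+2^n a,\,y_0+2^n b)$ with $a,b\in\{0,1\}$. Expanding the quadratic form and discarding the terms divisible by $2^{n+1}$ or by $2^{2n}$ (here is where $n\geq 1$, hence $2n\geq n+1$, is used), one gets
\[
A(x_0+2^n a)^2+B(x_0+2^n a)(y_0+2^n b)+C(y_0+2^n b)^2\equiv\bigl(Ax_0^2+Bx_0y_0+Cy_0^2\bigr)+2^n B(x_0 b+y_0 a)\pmod{2^{n+1}}.
\]
Writing $Ax_0^2+Bx_0y_0+Cy_0^2=D+2^n s$ (possible as $(x_0,y_0)$ solves $E_n$) and putting $t:=s\bmod 2$, the lift solves $E_{n+1}$ exactly when $t+B(x_0 b+y_0 a)\equiv 0\pmod 2$, i.e., since $B$ is odd, when $y_0 a+x_0 b\equiv t\pmod 2$.

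\textbf{Counting the lifts and conclusion.} Reducing $(x_0,y_0)$ modulo $2$ yields a solution of $E_1$, and since $D$ is odd the pair $(x_0,y_0)$ is not $\equiv(0,0)\pmod 2$; thus $(x_0,y_0)\bmod 2\in\{(1,0),(0,1),(1,1)\}$, and in each of these cases the linear form $(a,b)\mapsto y_0 a+x_0 b$ over $\mathbb{Z}/2\mathbb{Z}$ is nonzero, so $y_0 a+x_0 b\equiv t$ has exactly two solutions $(a,b)$. Therefore every solution of $E_n$ lifts to exactly two solutions of $E_{n+1}$, giving $N_{n+1}=2N_n$, and the formula follows by induction from $N_1$. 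The computation is otherwise routine; the only delicate points are the truncation of the expansion modulo $2^{n+1}$ (which needs $n\geq 1$) and the use of $D$ odd to keep the lifting congruence in $(a,b)$ nondegenerate — I expect the latter to be the step where a careless argument would go wrong.
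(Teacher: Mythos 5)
Your proof is correct, and it follows the same strategy as the paper, whose proof of this lemma is just the one-line remark that one argues ``by induction on $n$ using Hensel's Lemma'': your base case $N_1\in\{1,3\}$ and the lifting count $N_{n+1}=2N_n$ (using $B$ odd and $(x_0,y_0)\not\equiv(0,0)\pmod 2$, which follows from $D$ odd) are exactly the details that argument requires. Nothing further is needed.
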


\vspace{2mm}
\begin{proof}
We show the result by induction on $n$ using Hensel's Lemma.

\end{proof}
\vspace{2mm}
\begin{lemma}\label{lemme2}
\par Let $n\geq 1$ and $A,B,C,D$ be integers such that $A$ and $D$ are odd. Let $(E)$ be the following equation with variables $(x,y)$ both in $\Sp$ : 
\begin{equation*}
Ax^2+2Bxy+Cy^2 \equiv D \;  \pmod{2^n}
\end{equation*}
\vspace{1mm} \par We note $\Delta :=AC-B^2$. Then: 
\par \textbf{(1)} If $n=1$, $(E)$ has $2$ solutions.
\par \textbf{(2)} If $n=2$,  when $\Delta \equiv 2,3  \pmod{4}$ then $(E)$ has $4$ solutions. When $\Delta \equiv 0,1 \pmod{4}$ then $(E)$ has $8$ solutions if $AD\equiv 1 \pmod{4}$ and $0$ otherwise.
\par \textbf{(3)} If $n\geq 3$, we have the following cases: 
\begin{itemize}
\item \textbf{(a)} If $\Delta \equiv  0 \pmod{8}$ then $(E)$ has $2^{n+2}$ solutions if $AD\equiv  1 \pmod{8}$ and $0$ otherwise.
\item \textbf{(b)} If $\Delta \equiv  2,4,6 \pmod{8}$ then $(E)$ has $2^{n+1}$ solutions if $AD\equiv  1 \pmod{8}$ or $AD\equiv  1+\Delta  \pmod{8}$ and $0$ otherwise.
\item \textbf{(c)} If $\Delta \equiv 1,5 \pmod{8}$ then $(E)$ has $2^{n+1}$ solutions if $AD\equiv  1 \pmod{8}$ or $AD\equiv  5 \pmod{8}$ and $0$ otherwise.
\item \textbf{(a)} If $\Delta \equiv  3,7  \pmod{8}$ then $(E)$ has $2^n$ solutions.
\end{itemize}
\end{lemma}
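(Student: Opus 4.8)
The plan is to diagonalize the quadratic form by completing the square and then to count the solutions of the resulting equation, essentially by repeated use of Hensel's Lemma. Since $A$ is odd it is invertible modulo $2^n$, and $Ax^2+2Bxy+Cy^2 = A(x+A^{-1}By)^2+A^{-1}\Delta y^2$. The substitution $x\mapsto x+A^{-1}By$ is a bijection of $(\mathbb{Z}/2^n\mathbb{Z})^2$; multiplying the congruence by $A$ and then replacing $Ax$ by $u$ (again a bijection) shows that the number $N_n$ of solutions of $(E)$ equals the number of pairs $(u,y)\in(\mathbb{Z}/2^n\mathbb{Z})^2$ with $u^2+\Delta y^2\equiv D'\pmod{2^n}$, where $D':=AD$ is odd. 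In particular $N_n$ depends only on $\Delta$ and $AD$, and, writing $\rho_m(a)$ for the number of square roots of $a$ in $\mathbb{Z}/2^m\mathbb{Z}$, one has $N_n=\sum_{y\in\mathbb{Z}/2^n\mathbb{Z}}\rho_n(D'-\Delta y^2)$.

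I shall use the classical values of $\rho_m$, which themselves follow by induction on $m$ from Hensel's Lemma: $\rho_m(a)=0$ when $v_2(a)$ is odd and $v_2(a)<m$; $\rho_m(a)=2^{k}\rho_{m-2k}(2^{-2k}a)$ when $v_2(a)=2k<m$; $\rho_m(a)=2^{\lfloor m/2\rfloor}$ when $v_2(a)\geq m$; and, for $a$ odd, $\rho_m(a)$ equals $1,2,4$ for $m=1$, $m=2$, $m\geq 3$ respectively, the value $2$ (resp. $4$) occurring exactly when $a\equiv 1\pmod 4$ (resp. $\pmod 8$). The cases $n=1$ and $n=2$ of the lemma then follow by inspecting $(u,y)$ modulo $2$ and modulo $4$; a handful of small $n$ will also have to be checked by hand in the finest sub-cases below.

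For $n\geq 3$ I split $N_n=\sum_y\rho_n(D'-\Delta y^2)$ according to the parity of $y$ and to $v_2(D'-\Delta y^2)$. If $\Delta$ is even, then $D'-\Delta y^2$ is always odd, so only the $y$ with $\Delta y^2\equiv D'-1\pmod 8$ contribute; since $y\bmod 8$ determines $\Delta y^2\bmod 8$ and each class modulo $8$ is represented by $2^{n-3}$ elements of $\mathbb{Z}/2^n\mathbb{Z}$, a short enumeration modulo $8$ produces the counts $2^{n+2}$ when $\Delta\equiv 0\pmod 8$ and $AD\equiv 1\pmod 8$, and $2^{n+1}$ when $\Delta\equiv 2,4,6\pmod 8$ and $AD\equiv 1$ or $1+\Delta\pmod 8$, and $0$ otherwise. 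If $\Delta$ is odd, the $y$ even contribute, by the same enumeration modulo $8$, a total of $2^n$ exactly when $AD\equiv 1$ or $5\pmod 8$ and $0$ otherwise. The $y$ odd are the delicate part: there $D'-\Delta y^2$ is even with possibly large valuation, and one groups these $y$ by $k=\tfrac12 v_2(D'-\Delta y^2)$ (the odd-valuation groups contributing nothing), uses $\rho_n(2^{2k}a)=2^k\rho_{n-2k}(a)$, and counts the relevant $y$ — those with $D'-\Delta y^2\equiv 2^{2k}\pmod{2^{2k+3}}$, together with those satisfying $\Delta y^2\equiv D'\pmod{2^n}$ — via $\rho_{2k+3}$; the resulting geometric series sums to exactly $2^n$ when $\Delta\cdot AD\equiv 1$ or $5\pmod 8$ and to $0$ otherwise. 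Adding the two contributions gives $2^{n+1}$ if $AD\equiv 1$ or $5\pmod 8$ and $0$ otherwise when $\Delta\equiv 1,5\pmod 8$ (case (c)), and always $2^n$ when $\Delta\equiv 3,7\pmod 8$ (case (d), since then exactly one of the two contributions is nonzero). In every case $N_n$ depends only on $\Delta$ and $AD$ modulo $8$.

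The hard part is precisely the sub-case $\Delta$ odd, $y$ odd. Contrary to Lemma \ref{lemme4}, where one of the two partial derivatives of the quadratic form is always odd so that Hensel lifting gives at once $N_{n+1}=2N_n$, the diagonalized form $u^2+\Delta y^2$ has both partial derivatives even: a solution modulo $2^n$ lifts either to $4$ solutions or to none modulo $2^{n+1}$, and deciding which requires exactly the valuation analysis above. Carrying out the geometric-series bookkeeping, and checking that the two congruence conditions $\Delta\cdot AD\equiv 1$ and $\Delta\cdot AD\equiv 5\pmod 8$ yield the same total $2^n$, is the only genuine obstacle; everything else reduces to routine enumeration modulo $8$.
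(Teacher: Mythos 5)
Your proof is correct and follows essentially the same route as the paper: the substitution $z=Ax+By$ (your $u$) reduces $(E)$ to $z^2+\Delta y^2\equiv AD\pmod{2^n}$, after which one counts solutions using the standard formula for the number of square roots modulo $2^n$. In fact you supply more of the case-by-case counting (including the delicate odd-$\Delta$, odd-$y$ valuation analysis) than the paper, which simply cites the square-root count for odd residues and declares the rest ``easily computed''.
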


\vspace{2mm}
\par \begin{proof}
\par First we put $z=Ax+By$. The map from $\mathbb{Z}/2^n\mathbb{Z}\times\mathbb{Z}/2^n\mathbb{Z}$ to itself sending $(x,y)$ to $(z,y)$ is bijective as $A$ is odd and we remark that $(x,y)$ is solution of $(E)$ if and only if $(z,y)$ is solution of the following equation, say $(E')$: 
\begin{equation*}
z^2 +\Delta y^2 \equiv  AD \;  \pmod{2^n}
\end{equation*}
Thus $(E)$ and $(E')$ have the same number of solutions. The number of solutions of $(E')$ is easily computed using the  fact (see \cite{Dem}, proposition $5.13$) that if $a$ is an odd number and $n\geq 3$, then the equation $x^2\equiv a  \pmod{2^n}$ has $4$ solutions modulo $2^n$ if $a \equiv 1 \pmod{8}$ and $0$ otherwise.

\end{proof}

\vspace{2mm}
\begin{proof}[End of the Proof of Theorem \ref{prop_conj}]

We fix three invariants $l,x$ and $\tau$ and study the conjugacy classes of $C(l,x,\tau)$. Let us take two matrices $U,U'\in C(l,x,\tau)$. We can always conjugate them so that they verify the hypothesis of Lemma $\ref{lemma_imp}$. These two matrices are conjugate if and only if the set of solutions of $E_{U,U'}$ is not empty and the number of elements in the conjugacy class of $U$ is computed by using Lemmas $\ref{lemma_imp}$, $\ref{lemme2}$ and $\ref{lemme4}$.

\end{proof}

\vspace{4mm}
\subsubsection{Computation of the characters}
\vspace{3mm}
\par \begin{proposition}\label{even_traces}
Let $A\in \Sppp$ and $x,l,s$ be its associated invariants. The definition of $s$ has been given in  Corollary $\ref{corro_class}$ and will make sense now. The  trace  $\tr(\pi_{2^{n-1}}(A))$ is given by: 
\begin{enumerate}
\item If $l=0$, $|\tr(\pi_{2^{n-1}}(A))|^2=2^s$ if $0\leq s \leq  n-2$,  $\tr(\pi_{2^{n-1}}(A))=0$ if $s=n-1$ and $|\tr(\pi_{2^{n-1}}(A))|^2=2^{n-1}$ if $s=n$. 
\item If $1\leq l \leq n-2$ and $x=1$ then $|\tr(\pi_{2^{n-1}}(A))|^2=2^s$ when $2l\leq s \leq n+l-2$, $\tr(\pi_{2^{n-1}}(A))=0$ when $s=n+l-1$ and $|\tr(\pi_{2^{n-1}}(A))|^2=2^{n+l-1}$ if $s=n+l$.
\item If $l=n-1$ and $x=1$ then $\tr(\pi_{2^{n-1}}(A))=0$.
\item If $l=n$ and $x=1$ ($A=I_2$) then $|\tr(\pi_{2^{n-1}}(A))|^2=2^{2n-2}$.
\item If $2\leq l \leq n$ and $x=-1$ then $|\tr(\pi_{2^{n-1}}(A))|^2=4$.
\item If $3\leq l \leq n$ and $x=2^{l-1}+1$ then $|\tr(\pi_{2^{n-1}}(A))|^2=2^{2l-2}$.
\item If $3\leq l \leq n$ and $x=2^{l-1}-1$ then $|\tr(\pi_{2^{n-1}}(A))|^2=4$.
\end{enumerate}
\end{proposition}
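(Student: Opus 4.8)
The plan is to reduce each character value $\tr(\pi_{2^{n-1}}(A))$ to a single dyadic quadratic Gauss sum and then apply the classical evaluations recalled in \cite{BE}. Since the character is a class function, it is enough to evaluate it on the representatives $\pm A_l(\tau,c_1)$ and $\pm B_l(\tau,c_1)$ furnished by Proposition \ref{prop_conj}. For such a representative $A=\begin{pmatrix}a&b\\c&d\end{pmatrix}$, factor $A$ into elementary matrices: when $c$ is a unit of $\mathbb{Z}/2^n\mathbb{Z}$ (exactly the case $l=0$) use the Bruhat factorisation $A=\begin{pmatrix}1&ac^{-1}\\0&1\end{pmatrix}\begin{pmatrix}c&0\\0&c^{-1}\end{pmatrix}\begin{pmatrix}0&-1\\1&0\end{pmatrix}\begin{pmatrix}1&dc^{-1}\\0&1\end{pmatrix}$; when $l\geq1$, first conjugate (using Lemma \ref{lemma_imp}) so that the non-scalar part of $A$ has odd lower-left entry, whence $a$ is a unit, and use the analogous factorisation through $\begin{pmatrix}1&0\\ca^{-1}&1\end{pmatrix}$, $\begin{pmatrix}a&0\\0&a^{-1}\end{pmatrix}$, $\begin{pmatrix}1&a^{-1}b\\0&1\end{pmatrix}$; and in the cases $x=-1$ and $x=2^{l-1}-1$ pull out the factor $-\mathds{1}$ first. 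Each elementary factor acts by an explicit operator: the unipotents act diagonally as $e_j\mapsto(\text{unit})\,A^{\mu j^2}e_j$ (powers and products of \eqref{genonematrix2} and of $\pi_{p,1}(X)$ from Proposition \ref{matrix}); the Weyl element $\begin{pmatrix}0&-1\\1&0\end{pmatrix}$ acts by $\frac{G(\mp1,0,2p)}{2p}\beta^{\pm3}(A^{\pm2ij})_{i,j}$, the inverse discrete Fourier kernel, obtained from \eqref{genonematrix}; the torus element $\begin{pmatrix}a&0\\0&a^{-1}\end{pmatrix}$ acts by a monomial operator $e_j\mapsto(\text{unit})\,e_{aj}$; and $-\mathds{1}$ acts by the parity involution $e_j\mapsto e_{-j}$.

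Composing the factors, $\pi_{2^{n-1}}(A)$ is a matrix whose $(i,l)$-entry equals a single normalisation constant $\kappa$ times $A^{Q(i,l)}$ for an explicit quadratic form $Q$; here $\kappa$ is a product of the $\frac{G(\mp1,0,2p)}{2p}$ factors, of powers of $\beta$ and of unit scalars, so $|\kappa|$ is known ($\big|\frac{G(\mp1,0,2p)}{2p}\big|^2=\frac1p$, $|\beta|=1$). Taking the trace (an anti-diagonal sum when a parity factor is present) collapses $Q$ to one variable and gives $\tr(\pi_{2^{n-1}}(A))=\kappa\,\varepsilon\sum_{k\in\mathbb{Z}/2^{n-1}\mathbb{Z}}A^{\mu k^2}$, with $\varepsilon$ a root of unity from the monomial/parity factors and $\mu\in\mathbb{Z}/2^n\mathbb{Z}$ read off from the factorisation — up to a unit, $\mu$ is $\tr(A)-2$ when the scalar part is $+\mathds{1}$ and $\tr(A)+2$ when it is $-\mathds{1}$. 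One then substitutes the representatives of Proposition \ref{prop_conj} and computes the $2$-adic valuation $v_2(\mu)$; a careful but mechanical calculation shows that $v_2(\mu)$ depends only on the triple $(l,x,s)$ of Corollary \ref{corro_class}, and that it takes exactly the generic value, the single degenerate value, and the maximal value dictated by the seven regimes of the statement.

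Finally, insert the classical dyadic Gauss sum evaluation: writing $\mu=2^jm$ with $m$ odd, $\big|\sum_{k\in\mathbb{Z}/2^n\mathbb{Z}}A^{\mu k^2}\big|^2=2^{\,n+j+1}$ for $0\leq j\leq n-2$, the sum vanishes for $j=n-1$, and it equals $2^n$ for $j\geq n$; restricting to $k\in\mathbb{Z}/2^{n-1}\mathbb{Z}$ only contributes a factor $\tfrac12$. Combining with $|\kappa|$ and $|\varepsilon|=1$ produces the asserted values $|\tr(\pi_{2^{n-1}}(A))|^2=2^s$, resp.\ $0$, resp.\ $2^{n-1},\,2^{2n-2},\,2^{2l-2},\,4$, according to the regime. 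I expect the main obstacle to be the third block of Proposition \ref{prop_conj}, the $B_l(\tau,c_1)$ representatives with intermediate scalar part $x=2^{l-1}\pm1$: there the factorisation is the bulkiest and the computation of $v_2(\mu)$ the least routine, and one must stay attentive at the transition values $s=n-1$ and $s=n+l-1$ where the Gauss sum degenerates, and at $s=n$, $s=n+l$ where it is maximally degenerate and $\pi_{2^{n-1}}(A)$ is projectively scalar or a signed permutation — in all of these the accumulated root-of-unity and normalisation factors must be shown to contribute nothing to $|\tr|^2$.
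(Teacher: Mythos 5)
Your overall strategy (factor the representatives into elementary matrices, read off the kernel, evaluate a dyadic Gauss sum) is the paper's strategy, and it works as you describe for $l=0$: there $c$ is a unit, every entry of $\pi_{2^{n-1}}(A)$ really is $\kappa A^{Q(i,j)}$, and the trace is a one-variable Gauss sum of parameter $\tr(A)-2$, of $2$-adic valuation $s$. The gap is in the step ``composing the factors, the $(i,j)$-entry equals $\kappa A^{Q(i,j)}$ \ldots\ taking the trace collapses $Q$ to one variable.'' For $l\geq 1$ the lower unipotent factor $\begin{pmatrix}1&0\\ca^{-1}&1\end{pmatrix}$ has parameter of valuation $l\geq1$; its kernel $\tfrac{1}{N}G(ca^{-1},2(j-i),2^n)$ cannot be turned into a monomial $A^{Q(i,j)}$ by completing the square (that requires inverting $ca^{-1}$), and it vanishes on part of the index set. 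Consequently the trace is an irreducibly \emph{two}-variable Gauss sum, and your single-sum model with $\mu\sim\tr(A)-2$ (valuation $s$) gives wrong answers: for $x=1$, $1\leq l\leq n-2$ it predicts vanishing at $s=n-1$ and a top value independent of $l$, whereas the correct vanishing locus is $s=n+l-1$ and the top value is $2^{n+l-1}$ (take $l=1$, $v_2(b)=n-2$, so $s=n-1$: the true trace is a product $G(2c_1,0,2^n)G(-2^{n-2}b_1,0,2^n)\neq0$ of modulus squared $2^{n-1}$ up to normalisation, while your $\mu$ has valuation $n-1$ and your sum vanishes). The same mismatch kills cases (6) and (7): there $\tr(A)\mp2$ has valuation $l$, which would give $|\tr|^2\sim 2^{l}$ rather than the asserted $2^{2l-2}$ and $4$.

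What the paper does instead is keep the two-variable structure and exploit its degeneration. For $x=1$ it writes $A=ST^{c}S^{-1}T^{-b}$, so that on the diagonal the cross term disappears and the trace \emph{factors} as $\bigl(\tfrac{G(-1,0,2^n)}{2^n}\bigr)^2\tfrac{G(c,0,2^n)}{2}\tfrac{G(-b,0,2^n)}{2}$ — a product of two one-variable Gauss sums of valuations $l$ and $s-l$, which is exactly why the answer is governed by the pair $(l,s)$ and not by $s$ alone. For $x=-1$ and $x=2^{l-1}\pm1$ it uses $A=S^{-1}T^{c}S^{-1}T^{-b}$, resp.\ $A=ST^{ca^{-1}}SD_{-a}T^{-a^{-1}b}$ with the diagonal operator $D_{-a}$ of Lemma \ref{lemma_diag}, computes each diagonal entry as $\epsilon\,A^{(\cdot)i^2}\,G(c,4i,2^n)/2$, determines the sublattice of $i$ on which the inner Gauss sum is nonvanishing, and only then sums over $i$. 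If you want to keep a one-line reduction, the clean invariant is $\#\ker(A-\mathds{1})$ rather than $v_2(\tr A-2)$; but either way you must confront the two-dimensional sum, and your proposal as written does not.
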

\vspace{3mm}
\begin{lemma}\label{lemma_diag}
Let $a$ be an odd integer and $D_a:=\begin{pmatrix} a & 0 \\ 0& a^{-1} \end{pmatrix}\in \Sppp$. Then we have $\pi_{2^{n-1}}(D_a)=\epsilon (\delta_{ai,j})_{i,j}$ where $\epsilon$ is a scalar such that $|\epsilon|^2=1$.
\end{lemma}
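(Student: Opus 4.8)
The plan is to pin down $\pi_{2^{n-1},1}(D_a)$ through the Egorov identity \eqref{egorov} and the irreducibility in the Stone--Von Neumann theorem, rather than by multiplying the generator matrices of Proposition \ref{matrix}. Recall first that the level $2^{n-1}$ Weil representation factors through $\Sppp = SL_2(\mathbb{Z}/2^n\mathbb{Z})$, so $\pi_{2^{n-1},1}(D_a)$ makes sense for $D_a$ read in $\Sppp$. In genus one the Heisenberg group $\mathcal{H}_{2^{n-1},1}$ is generated by its central element $c$ (with $\Add_{2^{n-1}}(c) = A\cdot\mathds{1}$) and by two elements $P,Q$ lifting a symplectic basis of $H_1(\Sigma_1,\mathbb{Z}/2^n\mathbb{Z})$, and on the basis $\{e_j\}_{j\in\mathbb{Z}/2^{n-1}\mathbb{Z}}$ of $U_{2^{n-1}}$ from Figure \ref{base_tore_solid} the Schr\"odinger representation has the normal form $\Add_{2^{n-1}}(P)e_j = A^{2j}e_j$, $\Add_{2^{n-1}}(Q)e_j = e_{j+1}$; a swap of $P$ and $Q$ or a sign change only exchanges the roles of $a$ and $a^{-1}$ below and is harmless, since $a\mapsto a^{-1}$ permutes the odd residues modulo $2^n$.

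Since $D_a$ multiplies one symplectic basis vector by $a$ and the other by $a^{-1}$, and since $\omega(P,P) = \omega(Q,Q) = 0$ annihilates every central correction in $\mathcal{H}_{2^{n-1},1}$, we have $D_a\bullet P = (aP,0) = (P,0)^a$ and $D_a\bullet Q = (a^{-1}Q,0) = (Q,0)^{a^{-1}}$. I would then take the invertible permutation operator $T$ on $U_{2^{n-1}}$ with $Te_j = e_{a^{-1}j}$, that is $T = (\delta_{ai,j})_{i,j}$, and check \eqref{egorov} for $\phi = D_a$ directly on the generators: $T\,\Add_{2^{n-1}}(P)\,T^{-1}e_j = A^{2aj}e_j = \Add_{2^{n-1}}(P)^a e_j = \Add_{2^{n-1}}(D_a\bullet P)e_j$, then $T\,\Add_{2^{n-1}}(Q)\,T^{-1}e_j = e_{j+a^{-1}} = \Add_{2^{n-1}}(Q)^{a^{-1}}e_j = \Add_{2^{n-1}}(D_a\bullet Q)e_j$, and $T$ commutes with $\Add_{2^{n-1}}(c)$. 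By the irreducibility of the Schr\"odinger representation, an operator intertwining $\Add_{2^{n-1}}$ with its $D_a$-twist is unique up to an invertible scalar, hence $\pi_{2^{n-1},1}(D_a) = \epsilon\,(\delta_{ai,j})_{i,j}$ for some $\epsilon\in(k'_{2^{n-1}})^{\times}$.

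For the modulus, the matrix $(\delta_{ai,j})_{i,j}$ is a permutation matrix with entries in $\{0,1\}$, hence fixed by the involution $A\mapsto A^{-1}$ and satisfying $\overline{(\delta_{ai,j})}^{\,T}(\delta_{ai,j}) = \mathbf{1}$, while $\pi_{2^{n-1},1}(D_a)$ has a unitary representative by Proposition \ref{matrix}; two proportional unitary matrices differ by a scalar of modulus $1$, so $|\epsilon|^2 = 1$, which is the assertion. I expect the only real friction to be the bookkeeping of conventions for the Schr\"odinger action and for the symplectic action of $D_a$, not any genuine obstacle. A computational alternative would be to write $D_a$ as a short word in $X$ and $Y$ and to multiply out the explicit matrices of Proposition \ref{matrix}, but that route forces a Gauss-sum simplification which the intertwiner argument sidesteps.
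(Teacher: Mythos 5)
Your argument is correct, but it is not the route the paper takes. The paper's proof is a one-line direct computation: it writes $D_a$ as the word $T^{-a}ST^{-a^{-1}}ST^{-a}S$ in the generators of \eqref{genonematrix} and \eqref{genonematrix2} and multiplies out the explicit matrices, which forces exactly the Gauss-sum simplification you chose to avoid. Your intertwiner argument instead exhibits the permutation matrix $(\delta_{ai,j})_{i,j}$ as a solution of the Egorov identity \eqref{egorov} for $\phi=D_a$ and invokes the uniqueness-up-to-scalar already built into Definition \ref{def_egorov} via Stone--Von Neumann; the modulus of $\epsilon$ then comes for free from the unitarity statement in Proposition \ref{matrix}, since two proportional unitary matrices differ by a scalar of square-modulus one. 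The checks you perform are right: with $Te_j=e_{a^{-1}j}$ one gets $T\Add(P)T^{-1}e_j=A^{2aj}e_j=\Add(aP)e_j$ and $T\Add(Q)T^{-1}e_j=e_{j+a^{-1}}=\Add(a^{-1}Q)e_j$, and the absence of a central correction in $D_a\bullet(X,0)=(D_aX,0)$ is consistent with how the paper itself uses the $\ssp$-action on $\mathcal{H}_{p,g}$ in Lemma \ref{point2} and Proposition \ref{prop_ciz}. What your approach buys is conceptual robustness (no dependence on a particular word in $S$ and $T$, no Gauss sums, and insensitivity to the $P\leftrightarrow Q$ convention, which merely swaps $a$ and $a^{-1}$); what the paper's computation buys is an explicit formula for $\epsilon$ in terms of $\beta$ and Gauss sums, which is however never needed later since Proposition \ref{even_traces} only uses $|\epsilon|=1$.
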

\vspace{2mm}
\begin{proof} It is proved by a direct computation using the fact that $D_a=T^{-a}ST^{-a^{-1}}ST^{-a}S$ .

\end{proof}

\vspace{3mm}
\begin{proof}[Proof of Proposition $\ref{even_traces}$]
\par  First when $l=0$ or when $x=1$,  we can suppose that $A=\begin{pmatrix} 1&b\\c& 1+bc \end{pmatrix} =ST^{c}S^{-1}T^{-b}$ with $b=2^{s-l} b_1$, $c=2^{l}c_1$ where $b_1$ and $c_1$ are odd.
\vspace{2mm}
 \par A simple computation gives:
$$
\pi_{2^{n-1}}(A) = \beta^{\pm 3 +x}\frac{G(-1,0,2^n)^2}{2^{2n}}\left(\sum_k{A^{ck^2+2(j-i)k-bj^2}}\right)_{i,j}
$$
So:
\begin{equation*}
|\tr(\pi_{2^{n-1}}(A))|=\left| \left(\frac{G(-1,0,2^n)}{2^n}\right)^2\frac{G(c,0,2^{n})}{2}\frac{G(-b,0,2^{n})}{2} \right|
\end{equation*}
\vspace{1mm}
\par We conclude by using the fact that, if $x$ is odd and $s\in \{ 0, \ldots , n\}$ then (see \cite{BE}):
$$ |G(x2^s,0,2^n)|^2 = \left\{ 
\begin{array}{ll}
2^{s+n}, & \mbox{ when } s\leq n-2;
\\ 0 , & \mbox{ when } s=n-1;
\\ 2^n, & \mbox{ when } s=n.
\end{array}\right.
$$ 

\vspace{3mm}
\par  Then when $x=-1$ we can suppose $A=-\begin{pmatrix} 1&b\\c& 1+bc \end{pmatrix} =S^{-1}T^{c}S^{-1}T^{-b}$ with $b=2^{s-l} b_1$, $c=2^l c_1$ where $b_1$ and $c_1$ are odd. A similar computation gives: 
$$\pi_{2^{n-1}}(A)_{i,i} = \epsilon \left( \frac{G(-1,0,2^n)}{2^n} \right) ^2A^{-bi^2}\frac{G(c,4i,2^n)}{2}$$
where $\epsilon=\beta^{c-b-6}$ is a norm one scalar. 
The Gauss sum $G(c,4i,2^n)$ is not null if and only if $i\in \{ 0, 2^{n-2}\}$ when $l=n$, $2^{n-3}$ divides $i$ and $2^{n-2}$ does not when $l=n-1$ and $2^{l-1}$ divdes  $i$ when $2\leq l\leq n-3$.
\vspace{2mm}
\par We conclude by summing $\pi_{2^{n-1}}(A)_{i,i}$ over these $i$.


\vspace{3mm}
\par Now to compute the traces when $x=2^{l-1}\pm 1$, we write $A=\begin{pmatrix} a&b\\c& d \end{pmatrix}$ with $a$ odd and $c=2^l c_1$ with $c_1$ odd. We  use  the decomposition $A=ST^{ca^{-1}}SD_{-a}T^{-a^{-1}b}$ and  Lemma $\ref{lemma_diag}$ to find that: 
$$(\pi_{2^{n-1}}(A))_{i,i}=\epsilon' \left(\frac{G(-1,0,2^n)}{2^n}\right)^2 \frac{G(ca^{-1}, 2(a^{-1}-1)i, 2^n)}{2} A^{a^{-1}bi^2}$$

\par where $\epsilon'$ is a norm one scalar. We conclude by summing $\pi_{2^{n-2}}(A)_{i,i}$ over every $i$ and taking the norm.
\end{proof}

\vspace{5mm}
\subsubsection{The computation of the sum $\textbf{S}_{2^n}$}
\vspace{2mm}
\begin{proof}[Proof of Proposition \ref{prop_sums}]
\par  Set $\textbf{S}(x,l):=\sum_{A\in C(x,l)}{|T(A)|^2}$ and $\textbf{S}(l):=\sum_{A\in C(l)}{|T(A)|^2}$. By using Propositions $\ref{corro_class}$ and $\ref{even_traces}$ together, we compute the following sums:
\begin{enumerate}
\item $\textbf{S}(0)=2^{3n-2}+3\cdot 2^{3n-3}(n-1)$.
\item $\textbf{S}(1, l)=3\cdot 2^{3n-l-3}(n-l)$ if $1\leq l \leq n-2$.
\item $\textbf{S}(-1,l)=3\cdot 2^{3n-3l}$ if $2\leq l \leq n-1$.
\item $\textbf{S}(1+2^{l-1},l)=2^{3n-l-2}$ if $3\leq l \leq n-1$.
\item $\textbf{S}(-1+2^{l-1},l)=2^{3n-3l+2}$ if $3\leq l \leq n-1$.
\item $\textbf{S}(1)= \textbf{S}(1,1)= 3\cdot 2^{3n-4}(n-1)$.
\item $\textbf{S}(2)=\textbf{S}(1,2)+\textbf{S}(-1,2)=3\cdot 2^{3n-5}(n-2)+3\cdot 2^{3n-6}$.
\item $\textbf{S}(l)= 3\cdot 2^{3n-l-3}(n-l)+3\cdot 2^{3n-3l}+2^{3n-l-2}+2^{3n-3l+2}$  if $3\leq l \leq n-2$. 
\item $\textbf{S}(n-1)= 3\cdot 2^3+2^5+2^{2n-1}$.
\item $\textbf{S}(n)=2^3+2^{2n-1}$.
\end{enumerate}
\par We conclude by computing:
\begin{eqnarray*}
|\Sppp| \textbf{S}_{2^n} &=& \textbf{S}(0) + \textbf{S}(1) +\textbf{S}(2) +\sum_{l=3}^{n-2}{\textbf{S}(l)} +\textbf{S}(n-1)+ \textbf{S}(n) 
\\&=& 3\cdot 2^{3n-2}(n-1) = |\Sppp|\times (n-1)
\end{eqnarray*}
\end{proof}

\subsection{Higher genus factors}

\vspace{4mm} The following theorem was shown in \cite{CNS} when $r$ is odd. We give a different argument and deal with the case $r=2$ by using the results on the genus one representations.
\begin{theorem}\label{irred_factors}
If $r$ is prime, the modules $U_r^{g, \pm }$ and $W_{r^n}^{g, \pm }$ are irreducible.
\end{theorem}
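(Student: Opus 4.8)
The plan is to proceed by induction on the genus $g$, taking the genus one statements established in Section 4.1 as the base case. For the inductive step, consider the block embedding $Sp_{2(g-1)}(\mathbb{Z})\times SL_2(\mathbb{Z})\hookrightarrow \ssp$ in which $Sp_{2(g-1)}(\mathbb{Z})$ acts on the first $g-1$ handles and $SL_2(\mathbb{Z})$ on the last one (for even levels one restricts instead to the preimage of this subgroup in $\sph$, which one checks is again a product $\widetilde{Sp_{2(g-1)}(\mathbb{Z})}\times SL_2(\mathbb{Z})$; recall that $2$ is invertible in $k'_{2^n}$, cf. Remark \ref{remark_loc}). By the explicit formulas of Proposition \ref{matrix}, the generators $X_i,Y_i,Z_{i,j}$ with $i,j\leq g-1$ act only on the first $g-1$ tensor factors while $X_g,Y_g$ act only on the last, so, as a representation of this subgroup,
$$U_p^{\otimes g}\cong U_p^{\otimes(g-1)}\boxtimes U_p^{\otimes 1},\qquad W_{r^n}^{\otimes g}\cong W_{r^n}^{\otimes(g-1)}\boxtimes W_{r^n}^{\otimes 1}.$$
Since the involution $e_{a_1}\otimes\cdots\otimes e_{a_g}\mapsto e_{-a_1}\otimes\cdots\otimes e_{-a_g}$ splits as the product of the corresponding involutions on the two blocks, one obtains
$$U_p^{g,+}\cong (U_p^{g-1,+}\boxtimes U_p^{+})\oplus(U_p^{g-1,-}\boxtimes U_p^{-})$$
as $Sp_{2(g-1)}(\mathbb{Z})\times SL_2(\mathbb{Z})$-modules, and analogously for $U_p^{g,-}$, $W_{r^n}^{g,+}$ and $W_{r^n}^{g,-}$ (replacing $U_p^{\pm}$ by $W_{r^n}^{1,\pm}$).

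Working over $\mathbb{C}$ via a fixed embedding $k'_p\hookrightarrow\mathbb{C}$ — from which irreducibility over $k'_p$ follows by the usual descent argument, all the endomorphism algebras in sight being reduced to scalars — the inductive hypothesis together with the genus one irreducibility show that each of the two summands above is an irreducible $Sp_{2(g-1)}(\mathbb{Z})\times SL_2(\mathbb{Z})$-module, and that the two summands are non-isomorphic (restrict to $\{1\}\times SL_2(\mathbb{Z})$ and use $U_r^{+}\not\cong U_r^{-}$, resp. $W_{r^n}^{1,+}\not\cong W_{r^n}^{1,-}$, established in Section 4.1). Consequently any nonzero $\ssp$-submodule $M\subseteq U_r^{g,+}$ is, as a subspace, one of $0$, the first summand, the second summand, or all of $U_r^{g,+}$, so it suffices to show that if $M$ contains one of the two summands then it contains the other.

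For that I would use the Dehn twist $Z_{g-1,g}$, which lies in $\ssp$ but not in the block subgroup, and which by Proposition \ref{matrix} acts by the diagonal rule $e_{a_1}\otimes\cdots\otimes e_{a_g}\mapsto A^{(a_{g-1}-a_g)^2}e_{a_1}\otimes\cdots\otimes e_{a_g}$ (and preserves $W_{r^n}^{\otimes g}$). Take a test vector $v=(e_0\otimes\cdots\otimes e_0\otimes(e_a\pm e_{-a}))\otimes(e_b\pm e_{-b})$ lying in the summand one wishes to move out of; writing $u^{\pm}=e_{(0,\ldots,0,a)}\pm e_{(0,\ldots,0,-a)}$ and $w^{\pm}=e_b\pm e_{-b}$, a direct expansion using $(-a\pm b)^2=(a\mp b)^2$ shows that $\pi_{p,g}(Z_{g-1,g})v$, written in the $U_p^{g-1,\pm}\boxtimes U_p^{\pm}$ decomposition, has for its component along the \emph{other} summand a nonzero multiple of $(A^{(a-b)^2}-A^{(a+b)^2})\,u^{\mp}w^{\mp}$. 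Since $A^{(a-b)^2}\neq A^{(a+b)^2}$ whenever $4ab$ is not annihilated by the order of $A$ — e.g. $a=b=1$ works as soon as $r\geq 3$, and a suitable index pair from the basis of $W_{r^n}$ recalled before the statement works in general — this component is nonzero. Hence if $M$ contains the summand to which $v$ belongs it contains $\pi_{p,g}(Z_{g-1,g})v$, which is not in that summand, forcing $M$ to be all of $U_r^{g,+}$; the same computation with the $\pm$-basis of $W_{r^n}$ handles $W_{r^n}^{g,\pm}$ (including $r=2$), and the choice of signs in $v$ handles the odd modules $U_r^{g,-}$ and $W_{r^n}^{g,-}$.

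The main obstacle is the inductive bookkeeping rather than any single computation: one must justify carefully that $U_p^{\otimes g}$ (and $W_{r^n}^{\otimes g}$) really restricts to the external tensor product along the block subgroup — in particular, at even levels, that the $\mathbb{Z}/2\mathbb{Z}$-central extension $\sph$, restricted to the preimage of $Sp_{2(g-1)}(\mathbb{Z})\times SL_2(\mathbb{Z})$, splits as a compatible product — and one must take care with the passage between $k'_p$-irreducibility and $\mathbb{C}$-irreducibility. The twist computation itself is elementary; the only point to watch there is to choose the test vectors so that their image under $Z_{g-1,g}$ genuinely has a nonzero component in the complementary irreducible summand, which the inequality $A^{(a-b)^2}\neq A^{(a+b)^2}$ secures.
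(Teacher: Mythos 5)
Your proof is correct and is essentially the paper's argument in dual form: the paper restricts to the $g$-fold product of genus-one algebras, deduces that the commutant $\mathcal{B}'$ of the genus-$g$ algebra sits inside $\mathcal{A}'\otimes\cdots\otimes\mathcal{A}'$ with $\mathcal{A}'$ spanned by $\mathds{1}$ and $\theta$, and then uses exactly your $Z_{u,v}$ computation on the test vector $e_1\otimes\cdots\otimes e_1$ (i.e.\ the inequality $A^{4}\neq 1$, resp.\ $A^{(a-b)^2}\neq A^{(a+b)^2}$) to cut the commutant down to $\langle \mathds{1},\theta^{\otimes g}\rangle$, whose two eigenspaces are the asserted irreducible factors. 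Your inductive submodule-lattice formulation is an equivalent repackaging; the only remark worth making is that your worry about splitting the central extension $\sph$ over the block subgroup is unnecessary, since (as in the paper) one can argue with the subalgebra of $\End(U_p^{\otimes g})$ generated by the operators $\pi_{p,g}(X_i),\pi_{p,g}(Y_i),\pi_{p,g}(Z_{i,j})$, which is a tensor product of algebras irrespective of whether the group-level preimage splits.
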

\vspace{2mm}
\par \begin{proof}
\par First let us handle the $U_r^{g, \pm }$ modules, when $r$ is prime. 
  Denote by  $\mathcal{A}$ the $\textbf{k}_r$-subalgebra of $\End(U_r)$ generated by the operators $\pi_r (\phi)$ for $\phi\in SL_2(\mathbb{Z})$ and by $\mathcal{B}$ the $\textbf{k}_r$-subalgebra of $\End(U_r^{\otimes g})$ generated by the operators $\pi_{r,g} (\phi)$ for $\phi \in Sp_{2g}( \mathbb{Z})$,  when $r$ is odd, and $\phi \in \widetilde{Sp_{2g}( \mathbb{Z})}$, when $r$ is even.
	\vspace{1mm}
	\par We  denote by $\mathcal{A'}$ and $\mathcal{B'}$ their commutant in $\End(U_r)$ and $\End(U_r^{\otimes g})$ respectively. We know from the genus one study that $\mathcal{A'}$ is generated by $ \mathds{1}$ and the symmetry $\theta\in \GL(U_r)$  sending $e_i$ to $e_{-i}$. There is a natural injection $i:\mathcal{A}\otimes \ldots\otimes \mathcal{A} \hookrightarrow \mathcal{B}$. Now using the fact that the commutant of a tensor product is the tensor product of the commutant we get: 
\begin{equation*}
\mathcal{B'}\subset i((\mathcal{A}\otimes\ldots\otimes \mathcal{A})')= i(\mathcal{A'}\otimes \ldots\otimes \mathcal{A'}) 
\end{equation*}
Note that when $r=2$ then $\theta=\mathds{1}$ so $\mathcal{B'}$ consists of scalar elements and $\pi_{2,g}=\pi_{2,g}^{+}$ is irreducible. We can thus suppose that $r$ is odd.
\vspace{2mm} \par
A generic element of $i(\mathcal{A'}\otimes \ldots\otimes \mathcal{A'}) $ has the form:
$$ C=\sum_{i\in I}{\lambda_i a_{i_1}\otimes \ldots \otimes a_{i_g}}, \mbox{ with } I\subset \{1,\ldots,p\}^g \mbox{ and } a_{i_k}= \mathds{1} \mbox{ or } \theta$$
To conclude we must show that $\mathcal{B'}$ is generated by $\mathds{1}\otimes \ldots \otimes \mathds{1}$ and $\theta\otimes \ldots \otimes \theta$, that is  show that if $C\in \mathcal{B'}$ then $a_{i_u}=a_{i_v}$ for all $i\in I$ and $u\neq v$.
\vspace{1mm}
\par Let us choose  $u,v$ and set $e:=e_1\otimes \ldots \otimes e_1$. We compute the commutator:
$$ [C,\pi_{r,g}(Z_{u,v})](e) = \sum_{i\in I} \lambda_i (A^{4\epsilon_i}-1)(a_{i_1}\otimes\ldots\otimes a_{i_g})(e) $$
where $\epsilon_i=0$ if $a_{i_u}=a_{i_v}$ and $\epsilon_i=1$ elsewhere. Since $A^4\neq 1$ and the family $\{ (a_{i_1}\otimes\ldots\otimes a_{i_g})(e), i\in I\}$ is free, the fact that $C$ is in the commutant of $\mathcal{B}$ implies that $\epsilon_i=0$ for all $i$ so  the two eigenspaces of $\theta\otimes \ldots \otimes \theta$ are irreducible.

\vspace{4mm}
\par Denote by  $\mathcal{C}$ the $\textbf{k}_{r^n}$-subalgebra of $\End(U_{r^n})$ generated by the operators $\pi_r (\phi)$ for $\phi\in SL_2(\mathbb{Z})$ and by  the $\textbf{k}_{r^n}$-subalgebra of $\End(U_{r^n}^{\otimes g})$ generated by the operators $\pi_{r,g} (\phi)$  for $\phi \in Sp_{2g}( \mathbb{Z})$,  when $r$ is odd, and $\phi \in \widetilde{Sp_{2g}( \mathbb{Z})}$, when $r$ is even.
	\vspace{1mm}
\par We  denote by $\mathcal{C'}$ and $\mathcal{D'}$ their commutant in $\End(U_{r^n})$ and $\End(U_{r^n}^{\otimes g})$ respectively. We know from the genus one study that $\mathcal{A'}$ is generated by $ \mathds{1}$ and  $\theta$. The natural injection $i:\mathcal{C}\otimes \ldots\otimes \mathcal{C} \hookrightarrow \mathcal{D}$ implies that: 
\begin{equation*}
\mathcal{D'}\subset i((\mathcal{C}\otimes\ldots\otimes \mathcal{C})')=  i(\mathcal{C'}\otimes \ldots\otimes \mathcal{C'}) 
\end{equation*}
\par Again we choose a generic element $C=\sum_{i\in I}{\lambda_i a_{i_1}\otimes \ldots \otimes a_{i_g}}\in i(\mathcal{C'}\otimes \ldots\otimes \mathcal{C'}) $ with $I\subset \{1,\ldots,{p^n}\}^g $ and $ a_{i_k}= \mathds{1} $ or $\theta$ and suppose that $C\in B'$. Now remember that $W_{r^n}$ is defined as the orthogonal of $\bar{U}_{r^{n-2}}=\Span (g_i)$ in $U_{r^{n}}$ and since $e_1$ is orthogonal to all $g_i$ we deduce that $e=e_1\otimes \ldots \otimes e_1\in W_{r^n}^{\otimes g}$. So the fact that the commutator $[C,\pi_{{r^{n+2}},g}(Z_{u,v})](e)$ is null if and only if $C$ is a linear combination of $\mathds{1}\otimes\ldots\otimes \mathds{1}$ and  $\theta\otimes \ldots \otimes \theta$ permits us to conclude.
\end{proof}
\vspace{3mm}
\par Finally the irreducibility of the factors coming from the decomposition at composite levels $p=r_1^{n_1}\ldots r_k^{n_k}$ follows, using the decomposition $(1)$ from Theorem $\ref{main_theorem}$, exactly as in the genus one case:
\begin{corollary}
All the modules of the form $B_{r_1}\otimes \ldots \otimes B_{r_k}$ with $r_1,\ldots,r_k$ distinct prime and $B_{r_i}=U_{r_i}^{g, \pm}$ or $W_{r_i^n}^{g, \pm}$, are irreducible and pairwise distinct.
\end{corollary}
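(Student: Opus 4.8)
The plan is, for a composite level $p=r_1^{n_1}\cdots r_k^{n_k}$, to identify the genus $g$ \weil module $U_p^{\otimes g}$ together with its $\ssp$-action with an external tensor product of the \weil modules $U_{r_i^{n_i}}^{\otimes g}$ at the prime-power levels, and then to invoke the standard representation theory of a finite direct product --- exactly the mechanism already used in genus one (cf.\ Lemma \ref{tata}). Throughout, write $G_i$ for the finite group through which $\pi^g_{r_i^{n_i}}$ factors: a symplectic group over $\mathbb{Z}/r_i^{n_i}\mathbb{Z}$ when $r_i$ is odd, and, when $r_i=2$ and $g\geq 2$, the $\mathbb{Z}/2\mathbb{Z}$-central extension of \S\ref{linea_section}.

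First I would set up this external tensor product structure. By the Chinese remainder theorem the reduction map from $\ssp$ (resp.\ from $\sph$ when $2\mid p$) onto $\prod_i G_i$ is surjective, and each $\pi^g_{r_i^{n_i}}$ factors through the $i$-th projection. Iterating the commuting square of Lemma \ref{point1} (point $(1)$ of Theorem \ref{main_theorem}), applied so that the even prime power --- if present --- is split off first, produces a $k'_p$-module isomorphism $\Psi\colon\bigotimes_i U_{r_i^{n_i}}^{\otimes g}\to U_p^{\otimes g}$ conjugating $\bigotimes_i\pi^g_{r_i^{n_i}}(\phi)$ to $\pi^g_p(\phi)$ for \emph{every} $\phi$ in $\ssp$ (resp.\ $\sph$). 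Since the left-hand operator depends on $\phi$ only through its image in $\prod_i G_i$, and that image exhausts $\prod_i G_i$, this conjugation identity extends to all of $\prod_i G_i$; in other words, as a $\prod_i G_i$-module, $\bigotimes_i U_{r_i^{n_i}}^{\otimes g}$ is the external tensor product $\boxtimes_i\pi^g_{r_i^{n_i}}$.

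Next I would deduce irreducibility. Fix an embedding $k'_p\hookrightarrow\mathbb{C}$. By Theorem \ref{irred_factors} (with the genus one inputs recalled there) each factor $B_{r_i}\in\{U_{r_i}^{g,\pm},\,W_{r_i^{m}}^{g,\pm},\,\mathds{1}\}$ is an irreducible $G_i$-module, and in fact \emph{absolutely} irreducible: the commutant of the image of $G_i$ exhibited in the proof of Theorem \ref{irred_factors} is spanned by $\mathds{1}^{\otimes g}$ and $\theta^{\otimes g}$, hence acts by scalars on each of the two $\theta^{\otimes g}$-eigenspaces. By the classical description of the irreducible representations of a product of finite groups over $\mathbb{C}$ (\cite{Serre}, \S3.2), each $B_{r_1}\otimes\cdots\otimes B_{r_k}$ is then an irreducible $\prod_i G_i$-module, and two such modules are isomorphic if and only if all their corresponding factors are isomorphic. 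Decomposing each $U_{r_i^{n_i}}^{\otimes g}$ via points $(2)$--$(4)$ of Theorem \ref{main_theorem} and distributing the tensor product over the direct sums, $U_p^{\otimes g}$ becomes exactly the direct sum of all the modules $B_{r_1}\otimes\cdots\otimes B_{r_k}$, each therefore irreducible.

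It then remains to see that, for a fixed prime power $r^n$, the factors $B_r$ occurring in $U_{r^n}^{\otimes g}$ are pairwise non-isomorphic $G_i$-modules. For this I would restrict $\pi^g_{r^n}$ to the subgroup $SL_2(\mathbb{Z})^{g}\subset\ssp$ generated by the twists $X_i,Y_i$; by the formulas of Proposition \ref{matrix} this restriction is the $g$-fold external tensor power of the genus one \weil module $U_{r^n}$, which by the genus one analysis is a sum of pairwise distinct irreducibles (there $\textbf{S}_{r^n}$ equals the number of its summands --- Kloosterman's computation for $r$ odd, Proposition \ref{prop_sums} for $r=2$ --- so the criterion of \cite{Serre} applies). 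Hence its external tensor power, and therefore $U_{r^n}^{\otimes g}$ itself, is multiplicity free as an $\ssp$-module, which forces the listed summands (irreducible by Theorem \ref{irred_factors}) to be pairwise distinct. Combining with the previous paragraph, all the modules $B_{r_1}\otimes\cdots\otimes B_{r_k}$ are irreducible and pairwise distinct. The step I expect to be the main obstacle is the second one: promoting the diagonal equivariance furnished by Lemma \ref{point1} to genuine $\prod_i G_i$-equivariance and carrying out the central-extension bookkeeping at the prime $2$; everything else is either quoted from the results above or is standard finite-group representation theory over $\mathbb{C}$.
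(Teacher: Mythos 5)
Your proposal is correct and follows essentially the same route as the paper: reduce to prime-power levels via the Chinese remainder theorem and the tensor factorization of point $(1)$ of Theorem \ref{main_theorem}, then apply the fact that the commutant of a tensor product is the tensor product of the commutants (equivalently, Serre's description of irreducibles of a product group) together with Theorem \ref{irred_factors}. Your extra argument for pairwise distinctness at a fixed prime power --- restricting to $SL_2(\mathbb{Z})^g$ and invoking genus-one multiplicity-freeness --- is a detail the paper leaves implicit in the commutant dimension count, but it is a valid way to supply it.
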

\vspace{2mm}

\begin{proof}
Let $p=2^{\alpha}r_1^{n_1}\ldots r_k^{n_k}$ with $r_i$ some distinct odd primes. There is a group isomorphism between $\widetilde{Sp_{2g}(\mathbb{Z}/2p \mathbb{Z})}$ and $\widetilde{Sp_{2g}(\mathbb{Z}/2^{\alpha +1} \mathbb{Z})}\times Sp_{2g}(\mathbb{Z}/r_1^{n_1}\mathbb{Z})\times \ldots \times Sp_{2g}(\mathbb{Z}/r_k^{n_k}\mathbb{Z})$, if $p$ is even, and between $Sp_{2g}(\mathbb{Z}/p\mathbb{Z})$ and $Sp_{2g}(\mathbb{Z}/r_1^{n_1}\mathbb{Z})\times \ldots \times Sp_{2g}(\mathbb{Z}/r_k^{n_k}\mathbb{Z})$, if $p$ is odd. 
\vspace{2mm}
\par Denote by $\mathcal{A}_{p,g}$ the subalgebra of $\End(U_p^{\otimes g})$ generated by the operators $\pi_{p,g}(\phi)$. Using the first point of Theorem \ref{main_theorem}, we get an algebra isomorphism:
$$ \mathcal{A}_{p,g} \cong \mathcal{A}_{2^{\alpha},g} \otimes \mathcal{A}_{r_1^{n_1},g}\otimes \ldots \otimes \mathcal{A}_{r_k^{n_k},g} $$
\par We conclude using the fact that the commutant of a tensor product is the tensor product of the commutant and use Theorem \ref{irred_factors}.

\end{proof}

\vspace{5mm}

\section{Relation with the Witten-Reshetikhin-Turaev genus one representations}
\vspace{2mm}
\par We now give explicit isomorphisms between the submodules $U_p^-$ and the $SL_2(\mathbb{Z})$-modules $V_p$ defined in \cite{BHMV2} extending the relations in \cite{FK,LW} to the case where $p\equiv 3 \pmod{4}$. We include also their proof for self-completeness of the paper. Corollary \ref{coro} follows.
\par Denote by $S=\begin{pmatrix} 0&1\\ -1&0 \end{pmatrix}$ and $T=\begin{pmatrix} 1&-1 \\ 0 & 1 \end{pmatrix} $ the two generators of $SL_2(\mathbb{Z})$.
\vspace{2mm}
\par Using the basis $\{u_i, i\in I_p\}$ of $V_{p}$ defined in \cite{BHMV2}, where $$
I_p := \left\{ 
\begin{tabular}{ll}
$\{ 0, 1, 2, \ldots , r-2 \}$ & , if $ p=2r $ is even. \\
$\{ 0, 2, 4, \ldots , p-3 \}$ & , if $ p $ is odd. 
\end{tabular}
\right.$$ 
The Reshetikhin-Turaev representations in genus one are characterized by the projective class of the matrices:
$$ \begin{array}{cc}
\rho_p(T)=\left( A^{i(i+2)}\delta_{i,j} \right)_{i,j} & \rho_p(S)=c_p \left( (-1)^{i+j}[(i+1)(j+1)]\right)_{i,j} 
\end{array}$$
where we used the ring $\textbf{k'}_p:=\mathbb{Z} \left[ A,\frac{1}{p} \right] /(\phi_{2p}(A))$ , so $A$ is always a $2p$-th root of unity, and $c_p:=\frac{G(-1,0,2p)}{2p}$ when $p$ is even and $c_p:=\frac{G(-1,0,p)}{p}$ when $p$ is odd.
\vspace{5mm}
\par The following theorem was shown in \cite{FK} when $p$ is even and in \cite{LW} when $p\equiv 1 \pmod{4}$. We extend their proofs for  $p\equiv 3 \pmod{4}$. 
\vspace{2mm}
\begin{theorem}\label{link_bhmv}
For $p\geq 3$, the $SL_2(\mathbb{Z})$ projective modules $U_p^-$ and $V_{p}$ are projectively equivalent.
\end{theorem}
\vspace{4mm}
\par When $p$ is odd, the module $U_{p}$ is defined on the ring $\textbf{k}_p$, where $A$ is a primitive $p$-th root of unity, whereas $V_{p}$ is defined on $\textbf{k'}_p$, where $A$ is a primitive $2p$-th root of unity. In the preceding theorem, we turned $U_{p}^-$ into a $\textbf{k'}_p$-module using the ring morphism $\mu: \textbf{k'}_p \rightarrow \textbf{k}_p$ defined by $\mu(A)= A^4$.
\vspace{3mm}

\begin{proof}

\par When $p=2r$ is even, we define an isomorphism of $\textbf{k'}_{p}$-modules $\Psi : V_{p} \rightarrow U_{p}^{-}$ by $\Psi(u_i) = e_{r-i-1}-e_{r+i+1}$. We then compute the matrices of $\pi_p^-$ in the basis $(\Psi(u_i), i=0,1,\ldots r-2)$: 
\begin{eqnarray*}
\left<\Psi(u_j), \pi_p^- (T) \Psi(u_i) \right> &=& A^{(r-i-1)^2}\delta_{i,j}
\\ &=& A^{(r-1)^2} \cdot A^{-2ri} \cdot A^{i(i+2)}
\\ &=& A^{(r-1)^2} \rho_p(T)_{i,j}
\end{eqnarray*}
\vspace{1mm}
\begin{eqnarray*}
\left<\Psi(u_j), \pi_p^- (S) \Psi(u_i)\right> &=& c_p\left(A^{-2(r-i-1)(r-j-1)} - A^{2(r-i-1)(r-j-1)}\right) 
\\ &=& c_p \cdot A^{2r(i+j)}\left( A^{-2(i+1)(j+1)} - A^{2(i+1)(j+1)}\right)
\\ &=& - \rho_p(S)_{i,j}
\end{eqnarray*}
So $\pi_p^-$ and $\rho_p$ are projectively equivalent when $p$ is even.
\vspace{3mm}
\par Then when $p\geq 3$ is odd, we turn $U_{p}^-$ into a $\textbf{k'}_{p}$-module via the ring morphism $\mu : \textbf{k'}_{p}\rightarrow \textbf{k}_p$ defined by $\mu(A):=A^4$. We define an isomorphism $\Psi : V_{p}\rightarrow U_{p}^-$ of $\textbf{k'}_{p}$-modules via $\Psi(u_i):= e_{\frac{p-1-i}{2}} - e_{\frac{p+i+1}{2}}$ . We then compute the matrices of $\pi_p^-$ in the basis $(\Psi(u_i), i=0,2,4,\ldots p-3)$:
\begin{eqnarray*}
\left<\Psi(u_j), \pi_p^- (T) \Psi(u_i) \right> &=& \mu(A)^{\left( \frac{p-1-i}{2} \right)^2}\delta_{i,j}
\\ &=& A^{(p-i-1)^2} \delta_{i,j}
\\ &=& (-A)\cdot \rho_p(T)_{i,j}
\end{eqnarray*}
\vspace{1mm}
\begin{eqnarray*}
\left<\Psi(u_j), \pi_p^- (S) \Psi(u_i)\right> &=& c_p\left(\mu(A)^{-2(\frac{p-1-i}{2})(\frac{p-1-j}{2})}-\mu(A)^{2(\frac{p-1-i}{2})(\frac{p-1-j}{2})}\right) 
\\ &=& c_p \left(A^{-2(p-i-1)(p-1-j)}-A^{2(p-i-1)(p-1-j)}\right)
\\ &=& - \rho_p(S)_{i,j}
\end{eqnarray*}
And the proof is completed.
\end{proof}

\vspace{5mm}

\section{The Witten-Reshetikhin-Turaev TQFTs are determined by $3$-manifolds invariants without framed links}
\vspace{2mm}
\par In this section, we briefly review the universal construction of TQFTs of \cite{BHMV2} and prove Theorem \ref{th_BHMV}. For simplicity, we omit the complications due to the presence of an anomaly for it does not change the proof and refer to \cite{BHMV2, GM13} for more complete discussion. We also only write the proof when $p$ is even for the odd case easily follows using Theorem $1.5$ of \cite{BHMV2}.
\vspace{2mm}
\par Let $\mathcal{M}^{links}$ denotes the set of classes $(M,L)$ of closed oriented $3$ manifolds $M$ equipped with an embedded framed link $L\subset M$, modulo preserving-orientation homeomorphisms. In \cite{Li2, BHMV1}, the authors define a map $\tau_p : \mathcal{M}^{links}\rightarrow \mathbb{C}$ multiplicative for connected sums and sending the manifold $M$ with opposite orientation to the complex conjugate of the image of $M$.
\vspace{2mm}
\par Let $\Sigma$ be a closed oriented surface and $\mathcal{V} (\Sigma)$ be the complex vector space freely generated by  (homeomorphism classes of) elements $(M,\phi,L)$ where $M$ is a compact oriented three manifold, $\phi: \partial M\rightarrow \Sigma$ an orientation-preserving homeomorphism and $L\subset M$ is an embedded framed link (possibly empty). The space $\mathcal{V}(\Sigma)$ is naturally equipped with a bilinear form $\left<\cdot, \cdot\right>_p$ associated to $\tau_p$ defined as follows. If $\mathbb{M}_1=(M_1, \phi_1, L_1)$ and $\mathbb{M}_2=(M_2, \phi_2, L_2)$ are two cobordisms in $\mathcal{V}(\Sigma)$, we can glue them to obtain $\mathbb{M}_1\cup\mathbb{M}_2 :=(M_1 \cup_{\phi_1^{-1}\circ \phi_2}M_2, L_1\cup L_2) \in \mathcal{M}^{links}$. We then define $\left<\mathbb{M}_1, \mathbb{M}_2\right>_p := \tau_p (\mathbb{M}_1\cup \mathbb{M}_2)$ and extend the form to $\mathcal{V}(\Sigma)$ by bi-linearity. 
\vspace{2mm}
\par Eventually define the vector space:
$$ V_p(\Sigma) := \quotient{\mathcal{V}(\Sigma)}{\ker (\left<\cdot, \cdot\right>_p)}$$
\vspace{2mm}
\par By definition, any cobordism $\mathbb{M}\in \mathcal{V}(\Sigma)$ defines a vector $Z_p(\mathbb{M})\in V_p(\Sigma)$ by passing to the quotient. Moreover if $\mathbb{M}$ is a cobordism between to surfaces $\Sigma_1$ and $\Sigma_2$, we can associate a linear map $V_p(\mathbb{M}) : V_p(\Sigma_1) \rightarrow V_p(\Sigma_2)$ by sending $Z_p(\mathbb{M}')$ to $Z_p(\mathbb{M}\circ\mathbb{M}')$. Such a functorial assignation $\Sigma\rightarrow V_p(\Sigma)$ and $\mathbb{M}\rightarrow V_p(\mathbb{M})$ is what is called a TQFT. Note that the spaces $U_{p,g}$ of the \weil representations also fit into this framework (see \cite{GU, Koju_thesis}).
\vspace{2mm}
\par Denote by $X_p(\Sigma)\subset V_p(\Sigma)$ the subspace generated by classes of cobordisms with an empty link. Theorem \ref{th_BHMV} states that whenever $4$ does not divide $p$, then $X_p(\Sigma)=V_p(\Sigma)$.

By construction the subspace $X_p(\Sigma)$  is determined by the restriction of the three manifolds invariant $\tau_p$ to the subset  $\mathcal{M}\subset\mathcal{M}^{links}$ of closed oriented three manifolds without framed links.
\vspace{3mm}
\par We now turn to the proof of Theorem \ref{th_BHMV}. Simply denote by $V_p$ the space $V_p(S^1\times S^1)$ as in the previous section. Let $u_1:=Z_p(D^2\times S^1, \id, L) \in V_p$ be the vector associated to the manifold $D^2\times S^1$ with trivial boundary identification and the link $L=\{ 0\}\times S^1\subset D^2\times S^1$ with parallel framing. Theorem \ref{th_BHMV} easily follows from the following:
\vspace{2mm}
\begin{lemma}\label{lemma_un} If $4$ does not divide $p$, then $u_1\in X_p(S^1\times S^1)$.
\end{lemma}
\vspace{2mm}
\begin{proof}[Proof of Theorem \ref{th_BHMV} using Lemma \ref{lemma_un}]
\par The following argument is the same as Robert's argument in \cite{Ro} who proved Theorem \ref{th_BHMV} when $p$ is prime. We briefly reproduce it for self-completeness of the paper. Let $\mathbb{M}=(M,\phi, L)\in\mathcal{V}(\Sigma)$ be a cobordism and $Z_p(\mathbb{M})\in V_p(\Sigma)$ its class in the quotient. We have to show that $Z_p(\mathbb{M})$ is a linear combination of vectors associated to cobordisms without links, so we suppose that $L$ is not empty. 
\vspace{2mm}
\par Let $L_i\subset L$ be a connected component and choose $N_i$ a tubular neighborhood of $L_i$ in $M$ homeomorphic to $D^2\times S^1$. Writing $\mathbb{M}\setminus N_i= (M\setminus N_i, \phi\cup \phi_{N_i}, L\setminus L_i)\in \mathcal{V}(\Sigma \bigsqcup S^1\times S^1)$, we have $(\mathbb{M}\setminus N_i)\cup_{\partial N_i} (N_i, \phi_{N_i}, L_i)= \mathbb{M}$.
 \vspace{2mm} \par Passing to the quotient, we get $Z_p(\mathbb{M})=V_p(\mathbb{M}\setminus N_i)\circ Z_p(N_i, \phi_{N_i}, L_i)$, where $V_p(\mathbb{M}\setminus N_i)$ is a linear map from $V_p$ to $V_p(\Sigma)$ and $Z_p(N_i, \phi_{N_i}, L_i)$ is the vector $u_1\in V_p$. Lemma \ref{lemma_un} implies the existence of three manifolds $\mathbb{M}_1, \ldots, \mathbb{M}_k$ bounding $S^1\times S^1$ without framed links embedded and scalars $\lambda_1, \ldots, \lambda_k$ in $\mathbb{C}$ such that $u_1= \sum_i \lambda_i Z_p(M_i)$. It follows that:
 $$ Z_p(\mathbb{M})=\sum_i \lambda_i Z_p((\mathbb{M}\setminus N_i) \circ \mathbb{M}_i))$$
 \par Thus $Z_p(\mathbb{M})$ is a linear combination of vectors associated to cobordisms with one component less than $L$. We conclude by induction on the number of components of $L$.
 \end{proof}
\vspace{3mm}
\par The proof of Lemma \ref{lemma_un} relies on the fact that $X_p\subset V_p$ is invariant under the action of  $SL_2(\mathbb{Z})$ on $V_p$. Let $u_0 \in V_p$ denotes the vector associated to $D^2\times S^1$ without framed links embedded and let $\Lambda_0, \Lambda_1 \subset V_p$ be the $SL_2(\mathbb{Z})$ cyclic subspaces associated to $u_0$ and $u_1$ respectively.
\begin{lemma}\label{lemma_deux} If $4$ does not divide $p$, then $\Lambda_0 = \Lambda_1$. \end{lemma}

\begin{proof} Since $\Lambda_0$ and $\Lambda_1$ are $SL_2(\mathbb{Z})$ invariant subspaces by definition, we have to show that for any irreducible subspace $B\subset V_p\cong U_p^-$, we have $\Lambda_0\cap B = \Lambda_1\cap B$. Note that $\Lambda_i\cap B$ is either $\{0\}$ or $B$.
\vspace{2mm}
\par Using the identification $\Psi: V_p \cong U_p^-$ of (the proof of) Theorem \ref{link_bhmv} and Corollary \ref{coro}, we know explicit basis for such irreducible modules. Denote by $\Lambda_i ':= \Psi(\Lambda_i)\subset U_p^-$ and remark that if $p=2r$, we have:
$$ \begin{array}{ll} \psi(u_0)= e_{r-1}-e_{r+1} & \psi(u_1)=e_{r-2}-e_{r+2} \end{array} $$
\vspace{2mm}
\par Note $p=2r_1^{n_1}\ldots r_k^{n_k}$ the decomposition of $p$ in primes numbers, and choose $B=U_2\otimes B_1\otimes \ldots \otimes B_k \subset U_p^-$ an irreducible submodule as in Corollary \ref{coro}. We have to study whether the projection of $\psi(u_i)$ on $B$ is null or not.
\vspace{3mm}
\par First consider the case where there exists in index $i$ such that $n_i\geq 2$ and $B_i \neq W_{r_i^{n_i}}^{\pm}$. Then $B_i\subset U_{r_i^{n_i-2}}^{\pm}$ which is included in the subspace spanned by vectors $e_k$ such that $r_i$ divides $k$. But clearly $r_i$ does not divide $r-1, r+1, r-2$ nor $r+2$ thus the projection of both $\psi(u_0)$ and $\psi(u_1)$ on $B$ is null and we have $\Lambda_0'\cap B = \Lambda_1'\cap B = \{0\}$.
\vspace{2mm}
\par Next suppose that for each $i$ such that $n_i\geq 2$, we have $B_i= W_{r_i^{n_i}}^{\epsilon_i}$ where $\epsilon_i$ is either $-1$ or $+1$. Given two integers $x$ and $n$, we will denote by $[x]_n\in \mathbb{Z}/n\mathbb{Z}$ the class of $x$ modulo $n$. Let $x$ be any integer such that none of the $r_i$ divides $x$. Set:
$$ v_B:= e_{[x]_2}\otimes e_{[x]_{r_1^{n_1}}}^{\epsilon_1}\otimes \ldots \otimes e_{[x]_{r_k^{n_k}}}^{\epsilon_k}\in B$$
where we used the notation $e_i^\pm := e_i \pm e_{-i}$. By using the fact that $<e_i, e_i^{\epsilon}>=1$ and $<e_{-i}, e_i^{\epsilon}>= (-1)^{\frac{1-\epsilon}{2}}$, we compute: 
\begin{eqnarray*} <v_B, e_x-e_{-x}> &=& \left< e_{[x]_2}\otimes e_{[x]_{r_1^{n_1}}}^{\epsilon_1}\otimes \ldots \otimes e_{[x]_{r_k^{n_k}}}^{\epsilon_k} , e_{[x]_2 \otimes e_{[x]_{r_1^{n_1}}}}\otimes \ldots \otimes e_{[x]_{r_k^{n_k}}} \right> \\
& &-  \left< e_{[x]_2}\otimes e_{[x]_{r_1^{n_1}}}^{\epsilon_1}\otimes \ldots \otimes e_{[x]_{r_k^{n_k}}}^{\epsilon_k} , e_{[-x]_2} \otimes e_{[-x]_{r_1^{n_1}}}\otimes \ldots \otimes e_{[-x]_{r_k^{n_k}}} \right>\\
&=& 1-(-1)^{\sum_i \frac{1-\epsilon_i}{2}} =2\neq 0
\end{eqnarray*}

\par where we used in the last line the fact that there is an odd number of $i$ such that $\epsilon_i=-1$ for $B\subset U_p^-$. In particular the orthogonal projection of $e_x^-$ on $B$ is non-trivial whenever none of the $r_i$ divides $x$. Applying this to $x=r-1$ and $x=r-2$, we get that $\Lambda_0'\cap B=\Lambda_1'\cap B=B$.
\end{proof}
\vspace{2mm}
\begin{proof}[Proof of Lemma \ref{lemma_un}] Since $u_0$ belongs to $X_p$ by definition and that $X_p$ is invariant under the action of $SL_2(\mathbb{Z})$, we have $\Lambda_0\subset X_p$. Now Lemma \ref{lemma_deux} implies that $\Lambda_1\subset X_p$ thus $u_1\in X_p$.
\end{proof}

\vspace{5mm}


\bibliographystyle{plain}
\bibliography{biblio}

\end{document}